\numberwithin{equation}{section}
\newtheoremstyle{thmlemcorr}{10pt}{10pt}{\itshape}{}{\bfseries}{.}{10pt}{{\thmname{#1}\thmnumber{ #2}\thmnote{ (#3)}}}
\newtheoremstyle{thmlemcorr*}{10pt}{10pt}{\itshape}{}{\bfseries}{.}\newline{{\thmname{#1}\thmnumber{ #2}\thmnote{ (#3)}}}
\newtheoremstyle{defi}{10pt}{10pt}{\itshape}{}{\bfseries}{.}{10pt}{{\thmname{#1}\thmnumber{ #2}\thmnote{ (#3)}}}
\newtheoremstyle{remexample}{10pt}{10pt}{}{}{\bfseries}{.}{10pt}{{\thmname{#1}\thmnumber{ #2}\thmnote{ (#3)}}}
\newtheoremstyle{ass}{10pt}{10pt}{}{}{\bfseries}{.}{10pt}{{\thmname{#1}\thmnumber{ #2}\thmnote{ (#3)}}}
\theoremstyle{thmlemcorr}
\newtheorem{theorem}{Theorem}
\numberwithin{theorem}{section}
\newtheorem{lemma}[theorem]{Lemma}
\newtheorem{corollary}[theorem]{Corollary}
\newtheorem{proposition}[theorem]{Proposition}
\theoremstyle{thmlemcorr*}
\newtheorem{theorem*}{Theorem}
\newtheorem{lemma*}[theorem]{Lemma}
\newtheorem{corollary*}[theorem]{Corollary}
\newtheorem{proposition*}[theorem]{Proposition}
\newtheorem{problem*}[theorem]{Problem}
\newtheorem{conjecture*}[theorem]{Conjecture}
\theoremstyle{defi}
\newtheorem{definition}[theorem]{Definition}
\theoremstyle{remexample}
\newtheorem{remark}[theorem]{Remark}
\theoremstyle{ass}
\newcommand{\Crm}{\mathrm{C}}
\newcommand{\Lrm}{\mathrm{L}}
\newcommand{\Wrm}{\mathrm{W}}
\newcommand{\Acal}{\mathcal{A}}
\newcommand{\Dcal}{\mathcal{D}}
\newcommand{\Ecal}{\mathcal{E}}
\newcommand{\Fcal}{\mathcal{F}}
\newcommand{\Gcal}{\mathcal{G}}
\newcommand{\Lcal}{\mathcal{L}}
\newcommand{\Mcal}{\mathcal{M}}
\newcommand{\Pcal}{\mathcal{P}}
\newcommand{\Ebf}{\mathbf{E}}
\newcommand{\Ybf}{\mathbf{Y}}
\newcommand{\Abb}{\mathbb{A}}
\renewcommand{\Bbb}{\mathbb{B}}
\newcommand{\Nbb}{\mathbb{N}}
\newcommand{\Pbb}{\mathbb{P}}
\newcommand{\Qbb}{\mathbb{Q}}
\newcommand{\Sbb}{\mathbb{S}}
\newcommand{\Zbb}{\mathbb{Z}}
\DeclareMathOperator{\id}{id}
\DeclareMathOperator*{\wslim}{w*-lim}
\DeclareMathOperator{\curl}{curl}
\DeclareMathOperator{\dist}{dist}
\DeclareMathOperator{\rank}{rank}
\DeclareMathOperator{\spn}{span}
\DeclareMathOperator{\supp}{supp}
\newcommand{\ii}{\mathrm{i}}
\newcommand{\set}[2]{\left\{\, #1 \ \ \textup{\textbf{:}}\ \ #2 \,\right\}}
\newcommand{\setn}[2]{\{\, #1 \ \ \textup{\textbf{:}}\ \ #2 \,\}}
\newcommand{\setb}[2]{\bigl\{\, #1 \ \ \textup{\textbf{:}}\ \ #2 \,\bigr\}}
\newcommand{\setB}[2]{\Bigl\{\, #1 \ \ \textup{\textbf{:}}\ \ #2 \,\Bigr\}}
\newcommand{\setBB}[2]{\biggl\{\, #1 \ \ \textup{\textbf{:}}\ \ #2 \,\biggr\}}
\newcommand{\normBB}[1]{\biggl\|#1\biggr\|}
\newcommand{\abs}[1]{|#1|}
\newcommand{\absb}[1]{\bigl|#1\bigr|}
\newcommand{\dpr}[1]{\langle #1 \rangle}
\newcommand{\dprb}[1]{\bigl\langle #1 \bigr\rangle}
\newcommand{\ddpr}[1]{\langle\!\!\langle #1 \rangle\!\!\rangle}
\newcommand{\ddprb}[1]{\bigl\langle\!\!\!\bigl\langle #1 \bigr\rangle\!\!\!\bigr\rangle}
\newcommand{\cl}[1]{\overline{#1}}
\newcommand{\di}{\mathrm{d}}
\newcommand{\dd}{\;\mathrm{d}}
\newcommand{\N}{\mathbb{N}}
\newcommand{\R}{\mathbb{R}}
\newcommand{\Z}{\mathbb{Z}}
\newcommand{\loc}{\mathrm{loc}}
\newcommand{\sym}{\mathrm{sym}}
\newcommand{\per}{\mathrm{per}}
\newcommand{\toweak}{\rightharpoonup}
\newcommand{\toweakstar}{\overset{*}\rightharpoonup}
\newcommand{\toweakstarY}{\overset{*}\rightharpoonup}
\newcommand{\toup}{\uparrow}
\newcommand{\todown}{\downarrow}
\newcommand{\embed}{\hookrightarrow}
\newcommand{\cembed}{\overset{c}{\embed}}
\newcommand{\SmallO}{\mathrm{\textup{o}}}
\newcommand{\sbullet}{\begin{picture}(1,1)(-0.5,-2)\circle*{2}\end{picture}}
\newcommand{\frarg}{\,\sbullet\,}
\newcommand{\BD}{\mathrm{BD}}
\newcommand{\LD}{\mathrm{LD}}
\newcommand{\toY}{\overset{\Ybf}{\to}}
\newcommand{\area}[1]{\langle #1  \rangle}
\newcommand{\barea}{\area{\frarg}}
\DeclareMathOperator{\Tan}{Tan}
\newcommand{\proofstep}[1]{\textit{#1}}
\def\Xint#1{\mathchoice 
{\XXint\displaystyle\textstyle{#1}}%
{\XXint\textstyle\scriptstyle{#1}}%
{\XXint\scriptstyle\scriptscriptstyle{#1}}%
{\XXint\scriptscriptstyle\scriptscriptstyle{#1}}%
\!\int} 
\def\XXint#1#2#3{{\setbox0=\hbox{$#1{#2#3}{\int}$} 
\vcenter{\hbox{$#2#3$}}\kern-.5\wd0}} 
\def\dashint{\,\Xint-}
\newcommand{\restrict}{\begin{picture}(10,8)\put(2,0){\line(0,1){7}}\put(1.8,0){\line(1,0){7}}\end{picture}}
\newcommand{\eps}{\varepsilon}
\renewcommand{\phi}{\varphi}
\newcommand{\M}{\mathcal M}
\newcommand{\ysc}{\toweakstarY}
\newcommand{\CC}{\mathbb C}
\DeclareMathOperator{\Y}{\bf Y}
\DeclareMathOperator{\E}{\bf E}
\DeclareMathOperator{\e}{\bf E}
\newcommand{\A}{\Acal}
\DeclareMathOperator{\Div}{div}
\DeclareMathOperator{\Lip}{Lip}
\author[A.~Arroyo-Rabasa]{Adolfo Arroyo-Rabasa}
\address{\textit{A.A.-R.:} Mathematisches Institut, Universit\"at Leipzig, 53115 Bonn, Germany.}
\email{arroyo@math.uni-leipzig.de}
\author[G.~De Philippis]{Guido De Philippis}
\address{\textit{G.D.P:} SISSA, Via Bonomea 265, 34136 Trieste, Italy.}
\email{guido.dephilippis@sissa.it}
\author[F.~Rindler]{Filip Rindler}
\address{\textit{F.R.:} Mathematics Institute, University of Warwick, Coventry CV4 7AL, UK.}
\email{F.Rindler@warwick.ac.uk}
\title[Lower semicontinuity and relaxation of integral functionals]{Lower semicontinuity and relaxation of linear-growth integral functionals under PDE constraints} 
\begin{document}

\begin{abstract}
We show general lower semicontinuity and relaxation theorems for linear-growth integral functionals defined on vector measures that satisfy linear PDE side constraints (of arbitrary order). These results generalize several known lower semicontinuity and relaxation theorems for BV, BD, and for more general first-order linear PDE side constrains. Our proofs are based on recent progress in the understanding of singularities of measure solutions to linear PDEs and of the generalized convexity notions corresponding to these PDE constraints. 

\vspace{4pt}

%

\noindent\textsc{Keywords:} Lower semicontinuity, functional on measures, $\A$-quasiconvexity, generalized Young measure.
\vspace{4pt}

\noindent\textsc{Date:} \today{}.
\end{abstract}

\maketitle


\section{Introduction}

The theory of linear-growth integral functionals defined on vector-valued measures satisfying PDE constraints is central to many questions of the calculus of variations. In particular, their relaxation and lower semicontinuity properties have attracted a lot of attention, see for instance~\cite{AmbrosioDalMaso92,FonsecaMuller93,FonsecaMuller99,FonsecaLeoniMuller04,KristensenRindler10Relax,Rindler11,BaiaChermisiMatiasSantos13}. In the present work we unify and extend a large number of these results by proving general lower semicontinuity and relaxation theorems for such functionals. Our proofs are based on recent advances in the understanding of the \emph{singularities} that may occur in measures satisfying (under-determined) linear PDEs. 

Concretely, let $\Omega \subset \R^d$ be an open and bounded subset 
with $\Lcal^d(\partial \Omega)=0$ and consider 
the functional
\begin{equation}\label{Fintro}
 \Fcal^\#[\mu] :=  \int_\Omega f \biggl(x,\frac{\di\mu}{\di\Lcal^d}(x)\biggr) \dd x + \int_\Omega f^\# \biggl(x,\frac{\di\mu^s}{\di|\mu^s|}(x)\biggr) \dd|\mu^s|(x),
\end{equation}
defined for finite vector Radon measures $\mu \in \Mcal(\Omega;\R^N)$ on $\Omega$ with values in $\R^N$.
Here, $f \colon {\Omega} \times \R^N \to \R$ is a Borel integrand that 
has \emph{linear growth at infinity}, i.e.,
\[
  \abs{f(x,A)} \leq M(1+\abs{A})  \qquad
  \text{for all $(x,A) \in {\Omega} \times \R^N$,}
\]
whereby the \emph{(generalized) recession function}
\[
f^\#(x,A) \coloneqq \limsup_{\substack{x' \to x\\A' \to A\\t \to \infty}} \; \frac{f(x',tA')}{t}, \qquad (x,A) \in \cl\Omega \times \R^N,
\]
takes only finite values. Furthermore, on the candidate measures $\mu \in \Mcal(\Omega;\R^N)$ we impose the $k$th-order linear PDE  side constraint
\[
  \A \mu := \sum_{|\alpha| \leq k} A_\alpha \partial^\alpha \mu = 0  \qquad
  \text{in the sense of distributions.}
\]
The coefficient matrices $A_\alpha \in \R^{n}\otimes \R^N$ are assumed to be constant and we write $\partial^\alpha = \partial^{\alpha_1}_1 \dots \partial^{\alpha_d}_d$ for every multi-index $\alpha = (\alpha_1,\dots,\alpha_d) \in (\N \cup \{0\})^d$ with $\abs{\alpha} := \abs{\alpha_1} + \cdots + \abs{\alpha_d} \leq k$. We call measures $\mu \in \Mcal(\Omega;\R^N)$ with $\Acal \mu = 0$ in the sense of distributions \emph{$\A$-free}.


We will also assume that $\A$ satisfies \emph{Murat's constant rank condition} (see~\cite{Murat81,FonsecaMuller99}), that is, there exists $r \in \N$ such that
\begin{equation}\label{eq:constrank}
  \rank\Abb^k(\xi) = r \qquad
  \text{for all $\xi \in \Sbb^{d-1}$,}
\end{equation}
where
\[
  \Abb^k (\xi) := (2\pi\ii)^k \sum_{|\alpha| = k}\xi^\alpha A_\alpha,  \qquad
  \xi^\alpha = \xi^{\alpha_1}_1\cdots\xi^{\alpha_d}_d,
\]
is the \emph{principal symbol} of $\A$.  We also recall the notion of {\em wave cone} associated to \(\A\), which plays a fundamental role in the study of \(\A\)-free fields and first originated in the theory of compensated compactness~\cite{Tartar79,Tartar83,Murat78,Murat79,Murat81,DiPerna85}. 

\begin{definition}\label{def:wc}
Let \(\A\) be a $k$th-order linear PDE operator as above. The {\em wave cone} associated to \(\A\) is the set
\[
\Lambda_\A := \bigcup_{|\xi|= 1} \ker \Abb^k(\xi)   \; \subset \R^N.
\]
\end{definition}

Note that the wave cone contains those amplitudes along which it is possible to construct highly oscillating \(\A\)-free fields. More precisely, if \(\A\) is homogeneous, i.e., \(\A= \sum_{|\alpha|=k} A_\alpha \partial^\alpha\), then \(P_0\in \Lambda_\A\) if and only if there exists \(\xi\ne 0\) such that 
\[
\A (P_0 \, h(x \cdot \xi)) = 0\qquad \text{for all $h \in \Crm^k(\R)$}.
\]


Our first main theorem concerns the case when $f$ is $\A^k$-quasiconvex in its second argument, where
\[
  \A^k := \sum_{|\alpha| = k} A_\alpha \partial^\alpha
\]
is the \emph{principal part} of $\A$. 
Recall from~\cite{FonsecaMuller99} that a Borel function $h \colon \R^N \to \R$ is called \emph{$\A^k$-quasiconvex} if
\[
  h(A) \leq \int_Q h(A + w(y)) \dd y
\]
for all $A \in \R^N$ and all $Q$-periodic $w \in \Crm^\infty(Q;\R^N)$ such that $\A^k w = 0$ and $\int_Q w \dd y = 0$, where $Q := (-1/2,1/2)^d$ is the open unit cube in $\R^d$.

In order to state our first result, we 
introduce the notion of \emph{strong recession function} of $f$, which for $(x,A) \in \cl \Omega \times \R^N$ is defined as
\begin{equation}\label{eq:f_infty}
f^\infty(x,A) := \lim_{\substack {x' \to x \\ A' \to A \\ t \to \infty}} \frac{f(x',tA')}{t}, \qquad (x,A) \in \cl{\Omega} \times \R^N,
\end{equation}
provided the limit exists.

\begin{theorem}[lower semicontinuity] \label{thm:lsc}
Let $f \colon \Omega \times \R^N \to [0,\infty)$ be a continuous integrand. Assume that \(f\) has linear growth  at infinity, that is Lipschitz in its second argument, and that $f(x,\frarg)$ is $\A^k$-quasiconvex for all $x \in \Omega$. Further assume that either
\begin{itemize}
\item[(i)] $f^\infty$ exists in $\Omega \times \R^N$, or
\item[(ii)] $f^\infty$ exists in $\Omega \times \mathrm{span}\,\Lambda_{\A}$, and 
there exists a modulus of continuity $\omega \colon [0,\infty) \to [0,\infty)$ (increasing, continuous, $\omega(0) = 0$) such that
\begin{equation}\label{eq:modulus}
  |f(x,A) - f(y,A)|\le \omega(|x - y|)(1 + |A|)  \qquad
\text{for all $x,y \in {\Omega}$, $A \in \R^N$.}
\end{equation}
\end{itemize}
Then, the functional 

\[
  \Fcal[\mu] :=  \int_\Omega f \biggl(x,\frac{\di\mu}{\di\Lcal^d}(x)\biggr) \dd x + \int_\Omega f^\infty \biggl(x,\frac{\di\mu^s}{\di|\mu^s|}(x)\biggr) \dd|\mu^s|(x)
\]
is sequentially weakly* lower semicontinuous on the space
\[
  \Mcal(\Omega;\R^N) \cap \ker \A := \setb{ \mu \in \Mcal(\Omega;\R^N) }{ \A \mu = 0 }.
\]
\end{theorem}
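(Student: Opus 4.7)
The plan is to reduce the problem to two localized Jensen-type inequalities via the machinery of generalized Young measures (GYMs). Given $\mu_n \overset{*}\rightharpoonup \mu$ in $\Mcal(\Omega;\R^N)\cap\ker\A$, we may assume $\sup_n\abs{\mu_n}(\Omega)<\infty$; after extracting a subsequence, $\mu_n$ generates a GYM $\nu=((\nu_x)_{x\in\Omega},\lambda_\nu,(\nu_x^\infty)_{x\in\cl\Omega})$. Standard lower semicontinuity of the GYM duality pairing (together with, in case (ii), the modulus~\eqref{eq:modulus} to compensate for the fact that $f^\infty$ is only defined on $\mathrm{span}\,\Lambda_\A$) yields
\[
\liminf_n \Fcal[\mu_n]\;\geq\; \int_\Omega\!\int_{\R^N}\! f(x,A)\dd\nu_x(A)\dd x \;+\; \int_{\cl\Omega}\!\int_{\Sbb^{N-1}}\! f^\infty(x,A)\dd\nu_x^\infty(A)\dd\lambda_\nu(x).
\]
It therefore suffices to show $\ddpr{f,\nu}\geq\Fcal[\mu]$, and this follows by establishing two pointwise Jensen inequalities: one at $\Lcal^d$-a.e.\ point and one at $\abs{\mu^s}$-a.e.\ point.

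\textbf{Jensen at regular points.} At a Lebesgue point $x_0$ of $\frac{\di\mu}{\di\Lcal^d}$, perform a blow-up of $(\mu_n)$ around $x_0$ at vanishing scales. Rescaling converts the constraint $\A\mu_n=0$ into a PDE whose lower-order terms vanish in the limit, leaving a tangent sequence that is $\A^k$-free; the GYM it generates has barycenter equal to $\frac{\di\mu}{\di\Lcal^d}(x_0)+\frac{\di\lambda_\nu}{\di\Lcal^d}(x_0)\cdot\bigl[\nu_{x_0}^\infty\bigr]$, where $[\nu_{x_0}^\infty]$ denotes the barycenter of $\nu_{x_0}^\infty$. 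The Jensen inequality for $\A^k$-quasiconvex integrands acting on GYMs generated by $\A^k$-free sequences (in the spirit of Fonseca--M\"uller) then gives
\[
\int f(x_0,A)\dd\nu_{x_0}(A) \;+\; \frac{\di\lambda_\nu}{\di\Lcal^d}(x_0)\int f^\infty(x_0,A)\dd\nu_{x_0}^\infty(A)\;\geq\; f\!\biggl(x_0,\tfrac{\di\mu}{\di\Lcal^d}(x_0)\biggr).
\]

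\textbf{Jensen at singular points.} At $\abs{\mu^s}$-a.e.\ point $x_0$, the key input is the structure theorem of De~Philippis--Rindler for constant-rank $\A$-free measures, which yields $\frac{\di\mu^s}{\di\abs{\mu^s}}(x_0)\in\Lambda_\A$. A blow-up at the appropriate singular scale (matched to the mass of $\mu_n^s$ near $x_0$) produces an $\A^k$-free tangent object whose polar still lies in $\Lambda_\A\subset\mathrm{span}\,\Lambda_\A$, precisely the set where $f^\infty$ is defined in both~(i) and~(ii). Since $f^\infty(x_0,\cdot)$ is positively $1$-homogeneous and $\A^k$-quasiconvex, its restriction to $\Lambda_\A$-rays is convex, and a Jensen-type inequality specific to singular GYMs concentrated on $\Lambda_\A$ gives
\[
\frac{\di\lambda_\nu^s}{\di\abs{\mu^s}}(x_0)\int f^\infty(x_0,A)\dd\nu_{x_0}^\infty(A) \;\geq\; f^\infty\!\biggl(x_0,\tfrac{\di\mu^s}{\di\abs{\mu^s}}(x_0)\biggr).
\]
Integrating the regular inequality against $\Lcal^d$ and the singular one against $\abs{\mu^s}$, then adding, produces $\ddpr{f,\nu}\geq\Fcal[\mu]$, completing the argument.

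\textbf{Main obstacles.} The hard part is the singular Jensen inequality: it hinges on the constant-rank structure theorem to confine the polar of $\mu^s$ to $\Lambda_\A$ and then on converting $\A^k$-quasiconvexity along $\Lambda_\A$ into ordinary convexity (via plane-wave test fields $P_0\, h(x\cdot\xi)$ in the homogeneous case). The secondary difficulty is case (ii): because $f^\infty$ is only defined on $\mathrm{span}\,\Lambda_\A$, the initial pairing bound is not literally available, and one must approximate $f$ by integrands whose full recession function exists everywhere, using~\eqref{eq:modulus} uniformly in $x$ to pass to the limit; this is delicate but essentially technical, and reduces the general case to the much cleaner scenario of case~(i).
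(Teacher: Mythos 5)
Your treatment of case~(i) matches the paper: extract a generating sequence, pass to a generalized Young measure, and apply a Jensen inequality at $\Lcal^d$-a.e.\ point and another at $|\mu^s|$-a.e.\ point, the latter resting on the De~Philippis--Rindler structure theorem (polar of $\mu^s$ lies in $\Lambda_\A$) combined with Kirchheim--Kristensen (a $\Lambda_\A$-convex positively $1$-homogeneous function restricted to $\mathrm{span}\,\Lambda_\A$ is convex at points of $\Lambda_\A$). The small imprecision --- the tangent Young measure at a singular point is supported in $V_\A := \mathrm{span}\,\Lambda_\A$ with barycenter in $\Lambda_\A$, not ``concentrated on $\Lambda_\A$'' --- does not affect the argument.

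Case~(ii), which you relegate to ``essentially technical,'' is where the real work lies, and your proposed route has a genuine gap. You propose to approximate $f$ by integrands whose recession function exists everywhere and thereby reduce to case~(i). But the natural approximation (a decreasing sequence $f_m\searrow f$ with $f_m^\infty\searrow f^\#$, as in the paper's approximation lemma) destroys $\A^k$-quasiconvexity, so case~(i) cannot be applied to the $f_m$; and restoring quasiconvexity via $Q_{\A^k}$ gives back functions whose recession need not exist off $\Lambda_\A$. The deeper difficulty, which your sketch does not confront, is that under~(ii) one has $f\notin\Ebf(\Omega;\R^N)$, so the identity $\lim_j\Fcal[\mu_j]=\ddprb{f,\nu}$ is simply not available as a starting point. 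The paper instead splits the argument: for the absolutely continuous part it performs a \emph{direct blow-up} (Lebesgue points, rescaling to asymptotically $\A^k$-free objects, the Fonseca--M\"uller projection lemma) that bypasses the Young-measure pairing entirely and uses only the Lipschitz bound and the modulus $\omega$; for the singular part it works with the functional built from the \emph{lower} recession $f_\#$ --- which is always defined and always gives weak* lower semicontinuity on $\Y(\Omega;\R^N)$ --- and then upgrades $f_\#$ to $f^\#$ where it matters by proving $\supp\nu_x^\infty\subset V_\A$ for $\lambda_\nu^s$-a.e.\ $x$ (a nontrivial consequence of the structure theorem via the periodic Fourier analysis), since on $V_\A$ assumption~(ii) forces $f_\#=f^\#=f^\infty$. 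This $f_\#$-to-$f^\#$ upgrade is essential because one does not know whether $f_\#$ is $\Lambda_\A$-convex, so the singular Jensen inequality cannot be applied to $f_\#$ directly; that mechanism is absent from your proposal and cannot be replaced by the approximation you suggest.
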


Note that according to~\eqref{eq:sinwc} below, \(\Fcal[\mu]\) is well-defined for $\mu \in \Mcal(\Omega;\R^N) \cap \ker \A$ since the strong recession function is computed only at  amplitudes that  belong to \(\spn \Lambda_\A\). 

\begin{remark}
The $\A^k$-quasiconvexity of $f(x,\frarg)$ is not only a sufficient, but also a \emph{necessary} condition for the sequential weak* lower semicontinuity of $\Fcal$ on $\M(\Omega;\R^N) \cap \ker \A$. In the case of first-order partial differential operator, the proof of the necessity can be found in \cite{FonsecaMuller99}; the proof in the general case follows by verbatim repeating the same arguments.
\end{remark}  

\begin{remark}[asymptotic $\A$-free sequences] \label{thm:lsc_ext}
The conclusion of Theorem~\ref{thm:lsc} extends to sequences that are only asymptotically $\A$-free, that is,
\[
\Fcal[\mu] \le \liminf_{j \to \infty} \Fcal[\mu_j] 
\]
for all sequences $(\mu_j) \subset \Mcal(\Omega;\R^N)$ such that 
\[
\text{$\mu_j \toweakstar \mu$ in $\M(\Omega;\R^N)$ \quad and \quad $\A \mu_j \to 0$ in $\Wrm^{-k,q}(\Omega;\R^n)$} 
\]
for some $1 < q < d/(d-1)$ if $f(x,\frarg)$ is $\A^k$-quasiconvex for all $x \in \Omega$.

\end{remark}


Notice that $f^\infty$ in~\eqref{eq:f_infty} is a limit and, contrary to $f^\#$, it may fail to exist for $A \in (\spn \Lambda_\A) \setminus \Lambda_\A$ (for $A \in \Lambda_\A$ the existence of $f^\infty(x,A)$ follows from the $\A^k$-quasiconvexity, see Corollary~\ref{cor:coneconvexity}). If we remove the assumption that $f^\infty$ exists for points in the subspace generated by the wave cone $\Lambda_\A$, we still have the partial lower semicontinuity result formulated in Theorem \ref{thm:lsc_partial} below (cf.~\cite{FonsecaLeoniMuller04}).

\begin{remark} \label{rem:special}
As special cases of Theorem~\ref{thm:lsc} we get, among others, the following well-known results:\begin{enumerate}[(i)]
\item For $\A = \curl$, one obtains  BV-lower semicontinuity results in the spirit of  Ambrosio--Dal Maso~\cite{AmbrosioDalMaso92} and Fonseca--M\"{u}ller~\cite{FonsecaMuller93}.

\item For  $\A=\curl \curl$, where 
\[
  \curl\curl \mu:= \biggl( \sum_{i=1}^d \partial_{ik} \mu_{i}^j+\partial_{ij} \mu_{i}^k-\partial_{jk} \mu_{i}^i-\partial_{ii} \mu_{j}^k \biggr)_{j,\,k \, = \, 1,\ldots,d}
\]
is the second order operator expressing the Saint-Venant compatibility conditions (see~\cite[Example 3.10(e)]{FonsecaMuller99}), we re-prove the lower semicontinuity and relaxation theorem in the space of functions of bounded deformation (BD) from~\cite{Rindler11}.
\item For first-order operators $\A$, a  similar result was proved in~\cite{BaiaChermisiMatiasSantos13}.
\item[(iv)] Earlier work in this direction is in~\cite{FonsecaMuller99,FonsecaLeoniMuller04}, but there the singular (concentration) part of the functional was not considered.
\end{enumerate}

\end{remark}

\begin{theorem}[partial lower semicontinuity]\label{thm:lsc_partial}
Let $f \colon \Omega \times \R^N \to [0,\infty)$ be a continuous integrand such that $f(x,\frarg)$ is $\A^k$-quasiconvex for all $x \in \Omega$. Assume that \(f\) has linear growth  at infinity and is Lipschitz in its second argument, uniformly in $x$. Further, suppose that there exists a modulus of continuity $\omega$  as in~\eqref{eq:modulus}.
Then, 
\begin{align*}
\int_\Omega f \bigg(x& , \frac{\dd \mu}{\dd \Lcal^d}(x)\bigg)  \dd x \le 
\liminf_{j \to \infty} \Fcal^\#[\mu_j]
\end{align*}
for all sequences $\mu_j \toweakstar \mu$ in $\M(\Omega;\R^N)$ such that $\A \mu_j \to 0$ in $\Wrm^{-k,q}(\Omega;\R^N)$.
Here,
\[
\Fcal^\#[\mu] \coloneqq \int_\Omega  f \biggl(x,\frac{\di\mu}{\di\Lcal^d}(x)\biggr) \dd x 
 + \int_\Omega  f^\# \biggl(x,\frac{\di\mu^s}{\di|\mu^s|}(x)\biggr) \dd|\mu^s|(x),
\]
and $1 < q < d/(d-1)$.
\end{theorem}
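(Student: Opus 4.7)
The plan is to use the theory of generalized (DiPerna--Majda / Alibert--Bouchitt\'e) Young measures. After passing to a non-relabelled subsequence, $(\mu_j)$ generates a generalized Young measure $\bm{\nu}=(\nu_x,\lambda_\nu,\nu_x^\infty)$ on $\Omega$. Since $f$ is continuous with linear growth and $f^\#$ is upper semicontinuous, the standard Young-measure lower bound gives
\[
\liminf_{j\to\infty}\Fcal^\#[\mu_j]\;\ge\;\int_\Omega \dpr{\nu_x,f(x,\frarg)}\dd x+\int_\Omega \dpr{\nu_x^\infty,f^\#(x,\frarg)}\dd\lambda_\nu(x).
\]
Convergence of barycenters yields $\mu=\dpr{\nu_x,\id}\Lcal^d\llcorner\Omega+\dpr{\nu_x^\infty,\id}\lambda_\nu$, so at $\Lcal^d$-a.e. $x\in\Omega$
\[
\frac{\di\mu}{\di\Lcal^d}(x)=\dpr{\nu_x,\id}+\theta(x)\dpr{\nu_x^\infty,\id},\qquad \theta(x):=\frac{\di\lambda_\nu}{\di\Lcal^d}(x).
\]

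The heart of the proof is a pointwise Jensen-type inequality: for $\Lcal^d$-a.e.\ $x_0\in\Omega$,
\[
f\bigl(x_0,\dpr{\nu_{x_0},\id}+\theta(x_0)\dpr{\nu_{x_0}^\infty,\id}\bigr)\;\le\;\dpr{\nu_{x_0},f(x_0,\frarg)}+\theta(x_0)\dpr{\nu_{x_0}^\infty,f^\#(x_0,\frarg)}.
\]
Granting this, integration over $\Omega$ together with $f^\#\ge 0$ (which holds because $f\ge 0$) allows one to discard the singular part $\lambda_\nu^s$ of $\lambda_\nu$ and conclude
\[
\int_\Omega f\Bigl(x,\tfrac{\di\mu}{\di\Lcal^d}(x)\Bigr)\dd x\;\le\;\int_\Omega \dpr{\nu_x,f(x,\frarg)}\dd x+\int_\Omega \dpr{\nu_x^\infty,f^\#(x,\frarg)}\dd\lambda_\nu(x)\;\le\;\liminf_{j\to\infty}\Fcal^\#[\mu_j],
\]
which is the desired inequality.

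The main obstacle is the Jensen inequality itself. I would obtain it by a blow-up argument around a Lebesgue point $x_0$ at which $\theta$, the barycenters, and the Young-measure data are all regular. Rescaling a realizing sequence for $\bm{\nu}$ on shrinking balls $B_r(x_0)$, the lower-order parts of $\A$ become negligible under the rescaling (using $\A\mu_j\to 0$ in $\Wrm^{-k,q}$ with $1<q<d/(d-1)$), so the blown-up sequence is asymptotically $\A^k$-free; the constant-rank hypothesis~\eqref{eq:constrank} then supplies a homogeneous Fourier projector onto $\ker\A^k(\xi)$ that upgrades it to an exactly $\A^k$-free generating sequence without altering the generated limit. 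The modulus of continuity~\eqref{eq:modulus} together with the uniform Lipschitz bound in the second argument lets one replace $f(x,\cdot)$ by the frozen integrand $f(x_0,\cdot)$ on $B_r(x_0)$ with an error of order $\omega(r)(1+|A|)$, compatible with the linear growth. The definition of $\A^k$-quasiconvexity applied to a suitably periodized/truncated test field then produces the bound on the oscillatory part $\dpr{\nu_{x_0},f(x_0,\frarg)}$, while a separate truncation argument for concentrating amplitudes (where $f^\#(x_0,\cdot)$ is the correct recession object) controls the contribution $\theta(x_0)\dpr{\nu_{x_0}^\infty,f^\#(x_0,\frarg)}$. Handling this concentration term—and justifying that the projection does not spoil the Young-measure generation in the sharp space $\Wrm^{-k,q}$ for $q$ just below $d/(d-1)$—is the most delicate part of the argument.
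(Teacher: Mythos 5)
Your opening inequality,
\[
\liminf_{j\to\infty}\Fcal^\#[\mu_j]\;\ge\;\int_\Omega \dpr{\nu_x,f(x,\frarg)}\dd x+\int_\Omega \dpr{\nu_x^\infty,f^\#(x,\frarg)}\dd\lambda_\nu(x),
\]
is where the argument breaks. By Corollary~\ref{cor:representation}, the functional $\nu\mapsto\int\dpr{\nu_x,f}\dd x+\int\dpr{\nu_x^\infty,f^\#}\dd\lambda_\nu$ is sequentially weakly* \emph{upper} semicontinuous in $\Y(\Omega;\R^N)$ for continuous $f$; lower semicontinuity holds only for the functional built from $f_\#$. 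So the Young-measure route gives $\liminf\Fcal^\#[\mu_j]\ge\int\dpr{\nu_x,f}\dd x+\int\dpr{\nu_x^\infty,f_\#}\dd\lambda_\nu$, with $f_\#$ on the right. The Jensen inequality you then invoke at regular points (Proposition~\ref{prop: jensen regular}) has $f^\#$ in its right-hand side, and $f_\#\le f^\#$; since $f^\infty$ is not assumed to exist here, these may genuinely differ, and the chain of estimates does not close. This is precisely the obstruction highlighted in Remarks~\ref{rmk:rec} and~\ref{rem:crucial}: without the hypothesis $f^\infty=f_\#=f^\#$ on $\spn\Lambda_\A$, the passage from $f_\#$ to $f^\#$ after a Young-measure decomposition is unavailable.

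The paper avoids this by never decomposing $\liminf\Fcal^\#[\mu_j]$ through the global Young measure for the absolutely continuous estimate. Instead it proves the density inequality~\eqref{hybrid1}, $\frac{\di\lambda}{\di\Lcal^d}(x_0)\ge f(x_0,\frac{\di\mu}{\di\Lcal^d}(x_0))$, by a direct blow-up: mollify $\mu_j$ locally to $u_j^h\Lcal^d$, then use area-strict convergence together with the \emph{upper} semicontinuity of the $f^\#$-functional (Corollary~\ref{cor:representation}, Remark~\ref{rem:strict}) to pass from $\Fcal^\#[\mu_j;Q_r(x_0)]$ to $\int_{Q_r(x_0)}f(x,u^h_j)\dd x$, on which $f^\#$ never appears. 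After that, Proposition~\ref{lem:nonh} plus the projection Lemma~\ref{lem:boundary values} produce an $\A^k$-free periodic competitor for the $\A^k$-quasiconvexity inequality, and Theorem~\ref{thm:lsc_partial} follows by the same integration argument as in~\eqref{mio}. Your blow-up sketch for the Jensen inequality is close in spirit to this, but embedding it inside the Young-measure framework with $f^\#$ on the concentration part is what makes the first inequality fail. If you reorganize the proof so the mollification--blow-up is carried out directly on the functionals $\Fcal^\#[\mu_j;\cdot]$ rather than on the generated Young measure, the $f_\#/f^\#$ mismatch disappears and the proof goes through.
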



If we dispense with the assumption of $\A^k$-quasiconvexity on the integrand, we have the following two relaxation results:


\begin{theorem}[relaxation] \label{thm:relax}
Let $f \colon \Omega \times \R^N \to [0,\infty)$ be a continuous integrand that is Lipschitz in its second argument, uniformly in $x$. Assume also that $f$ has linear growth at infinity (in its second argument) and is such that there exists a modulus of continuity $\omega$ as in~\eqref{eq:modulus}. Further, suppose that $\A$ is a homogeneous PDE operator and that the strong recession function
\[
\text{$f^\infty(x,A)$ \, exists for all \, $(x,A) \in \Omega \times \textnormal{span}\,\Lambda_{\A}$}.
\]
Then, for the functional 
\[
  \Gcal[u] := \int_{\Omega} f(x,u(x))  \dd x, \qquad u \in \Lrm^1(\Omega;\R^N),
\]
the (sequentially) weakly* lower semicontinuous envelope of $\Gcal$, defined to be
\begin{align*}
  \overline{\Gcal}[\mu] := \inf\setB{\liminf_{j \to \infty} \Gcal[u_j]}{ &(u_j) \subset \Lrm^1(\Omega;\R^N), \; \text{$u_j \, \Lcal^d \toweakstar \mu$ in $\M(\Omega;\R^N)$} \\
  &\text{and $\A u_j \to 0$ in $\Wrm^{-k,q}$}},
\end{align*}
where $\mu \in \M(\Omega;\R^N) \cap \ker \A$ and $1 < q < d/(d-1)$, is given by
\begin{align*}
  \overline{\Gcal}[\mu] &= \int_{\Omega} Q_{\A}f \biggl( x, \frac{\di \mu}{\dd \Lcal^d}(x) \biggr)\dd x
  + \int_{\Omega} (Q_{\A}f)^\# \left( x, \frac{\di \mu^s}{\di |\mu^s|}(x)\right)\dd |\mu^s|(x).
\end{align*}
Here, $Q_{\A}f(x,\frarg)$ denotes the $\A$-quasiconvex envelope of $f(x,\frarg)$ with respect to the second argument (see Definition~\ref{def:quasi} below).
\end{theorem}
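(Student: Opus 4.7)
The plan is to establish $\overline{\Gcal}[\mu]$ equals the stated formula by matching a lower and an upper bound, using the lower semicontinuity Theorem~\ref{thm:lsc} for the first and an explicit construction of a recovery sequence for the second.

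\textbf{Lower bound.} I would first check that the quasiconvexified integrand $Q_\A f$ inherits the hypotheses of Theorem~\ref{thm:lsc}(ii): it is continuous, has the same linear growth and Lipschitz constant in its second argument as $f$ (infima of uniformly Lipschitz families), and satisfies the same modulus of continuity~\eqref{eq:modulus} because the admissible test fields in the definition of $Q_\A$ are independent of $x$. Since $\A$ is homogeneous, $\A^k=\A$, and $Q_\A f(x,\frarg)$ is by construction $\A^k$-quasiconvex. By Corollary~\ref{cor:coneconvexity}, the strong recession function $(Q_\A f)^\infty(x,A)$ exists on $\Omega\times \spn\Lambda_\A$ and agrees there with $(Q_\A f)^\#(x,A)$. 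Given any competitor sequence $u_j\Lcal^d\toweakstar\mu$ with $\A u_j\to 0$ in $\Wrm^{-k,q}$, the pointwise bound $Q_\A f\le f$ combined with Remark~\ref{thm:lsc_ext} applied to $Q_\A f$ yields
\[
 \int_\Omega Q_\A f\!\left(x,\tfrac{\di\mu}{\di\Lcal^d}\right)\dd x+\int_\Omega (Q_\A f)^\#\!\left(x,\tfrac{\di\mu^s}{\di|\mu^s|}\right)\dd|\mu^s|\le \liminf_{j\to\infty}\Gcal[u_j],
\]
noting that the singular polar $\di\mu^s/\di|\mu^s|(x)$ lies in $\Lambda_\A\subset \spn\Lambda_\A$ for $|\mu^s|$-a.e.\ $x$ by the De~Philippis--Rindler structure theorem, which is exactly what is needed to apply case (ii).

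\textbf{Upper bound.} I would construct a recovery sequence by a two-scale argument, treating $\mu^a\Lcal^d$ and $\mu^s$ separately and then diagonalising. For the regular part, approximate $\mu^a$ by piecewise constants over a fine cubic grid; on each cube of the grid with centre $x_0$ and value $A_0$ pick a $Q$-periodic, mean-$A_0$ test field $w\in\ker\A$ with $\int_Q f(x_0,w)\dd y\le Q_\A f(x_0,A_0)+\eps$ (obtained from the definition of the quasiconvex envelope), and rescale its oscillations to a small cell-size $1/j$. The resulting field $u_j^a$ is cell-wise $\A$-free; after applying the Fonseca--M\"uller projection (valid by~\eqref{eq:constrank}) to correct boundary mismatches one obtains an asymptotically $\A$-free sequence with $\int f(x,u_j^a)\dd x\to \int Q_\A f(x,\mu^a)\dd x$, the substitution $f(x_0,\cdot)\rightsquigarrow f(x,\cdot)$ being controlled by $\omega$. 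For the singular part, at $|\mu^s|$-a.e.\ $x_0$ the amplitude $P_0=\di\mu^s/\di|\mu^s|(x_0)$ belongs to $\Lambda_\A$, so there exists $\xi\in\Sbb^{d-1}$ with $P_0\in\ker\Abb^k(\xi)$, and plane waves $P_0 h(x\cdot\xi)$ are automatically $\A$-free. Using a blow-up of $\mu^s$ at $x_0$ and concentrating profiles $P_0 h_j(x\cdot\xi)$ with $\|h_j\|_{\Lrm^1}\to 1$, the contribution of the singular part is recovered as $(Q_\A f)^\infty(x_0,P_0)=(Q_\A f)^\#(x_0,P_0)$, relying on the positive one-homogeneity of $(Q_\A f)^\#$ along $\spn\Lambda_\A$.

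The two approximations are merged by cutting off a small neighbourhood of the concentration set of $\mu^s$, summing, and extracting a diagonal subsequence. The compact embedding $\M(\Omega;\R^N)\embed\Wrm^{-1,q}(\Omega;\R^N)$ for $1<q<d/(d-1)$ ensures that the total residual $\A u_j$ is $o(1)$ in $\Wrm^{-k,q}$, so the constructed sequence is admissible in the definition of $\overline\Gcal$.

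\textbf{Main obstacle.} The subtle step is the construction of the singular part of the recovery sequence: one must simultaneously (a) realise the value $(Q_\A f)^\#(x_0,P_0)$ by concentrating plane waves in a direction dictated by the wave cone at $|\mu^s|$-a.e.\ $x_0$, (b) preserve the weak* limit so that no mass is lost or gained, and (c) maintain the asymptotic $\A$-freeness globally through the Fonseca--M\"uller projection. This is exactly the point where the De~Philippis--Rindler dimensional rigidity (forcing $\di\mu^s/\di|\mu^s|\in\Lambda_\A$) and the cone convexity of $Q_\A f$ along $\Lambda_\A$ interact decisively; without them the $^\#$ upper recession in the formula could not be attained by an $\A$-free recovery sequence.
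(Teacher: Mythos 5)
Your lower bound is on the right track, but the claim that $(Q_\A f)^\infty$ exists on $\Omega\times\spn\Lambda_\A$ is unjustified. $\A^k$-quasiconvexity (hence $\Lambda_\A$-convexity) of $Q_\A f(x,\frarg)$ guarantees, via radial convexity, the existence of $(Q_\A f)^\infty(x,A)$ only for $A\in\Lambda_\A$; for $A\in\spn\Lambda_\A\setminus\Lambda_\A$ the strong recession function need not exist, and Corollary~\ref{cor:coneconvexity} only speaks to $(Q_\A f)^\#$. The paper is careful about precisely this: the lower bound is obtained in Remark~\ref{rem:lb} via Corollary~\ref{cor:principle}, whose Jensen inequalities are formulated entirely in terms of $h^\#$ and $(Q_{\A^k}h)^\#$, and the remark explicitly notes that existence of $(Q_{\A^k}f)^\infty$ on $\spn\Lambda_\A$ is \emph{not} required. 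You should route the lower bound through that instead of Theorem~\ref{thm:lsc}(ii) applied to $Q_\A f$.

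The upper bound is where the genuine gap lies. The paper avoids any direct construction of the singular part: it first builds (Step~2) an \emph{area-strictly} converging sequence $u_j\Lcal^d\to\mu$ of asymptotically $\A$-free $\Lrm^1$ fields by partition of unity plus mollification, then (Step~3.a) proves $\overline\Gcal[u]\le\int_\Omega Q_\A f(x,u(x))\dd x$ for absolutely continuous $u$ via the piecewise-constant/periodic-oscillation construction you describe, and finally (Step~3.b) passes to general $\mu$ by combining the area-strict upper semicontinuity of the $Q_\A f$-functional (Corollary~\ref{cor:representation}, Remark~\ref{rem:strict}) with the strict-approximation sequence; this automatically produces $(Q_\A f)^\#$ on the singular part. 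Your plan instead tries to realize the singular contribution directly by concentrating plane waves $P_0 h_j(x\cdot\xi)$. That construction only recovers $f^\#(x_0,P_0)$, not $(Q_\A f)^\#(x_0,P_0)$: without superimposing quasiconvexifying oscillations at the fine scale on top of the concentration, you cannot push the energy density below $f$, and $f^\#\ge(Q_\A f)^\#$ with generally strict inequality. Making the concentration-plus-oscillation construction rigorous while maintaining global $\A$-freeness is exactly the difficulty the paper circumvents with the Reshetnyak argument; as stated, your singular-part recovery sequence proves the wrong upper bound. The diagonalization and cut-off step gluing $\mu^a$ and $\mu^s$ is also left vague, but the plane-wave-only construction is the decisive flaw.
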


If we want to find the relaxation in the space $\Mcal(\Omega;\R^N) \cap \ker \A$ we need to assume that $\Lrm^1(\Omega;\R^N) \cap \ker \A$ is dense in $\M(\Omega;\R^N) \cap \ker \A$ with respect to a finer topology than the natural weak* topology (in this context also see~\cite{AR2}).

\begin{theorem} \label{thm:relax2}
Let $f \colon \Omega \times \R^N \to [0,\infty)$ be a continuous integrand that is Lipschitz in its second argument, uniformly in $x$. Assume also that $f$ has linear growth at infinity (in its second argument) and is such that there exists a modulus of continuity $\omega$ as in~\eqref{eq:modulus}.
Further, suppose that $\A$ is a homogeneous PDE operator, that the strong recession function
\[
\text{$f^\infty(x,A)$ \, exists for all \, $(x,A) \in \Omega \times \mathrm{span}\,\Lambda_{\A}$},
\]
and that for all $\mu \in \M(\Omega;\R^N) \cap \ker \A$ there exists a sequence $(u_j) \subset \Lrm^1(\Omega;\R^N) \cap \ker \A$ such that
\begin{equation}\label{density}
u_j \, \Lcal^d \toweakstar \mu \quad \text{in $\M(\Omega;\R^N)$} \quad \text{and} \quad \area{u_j \, \Lcal^d}(\Omega) \to \area{\mu}(\Omega),
\end{equation}where $\barea$ is the area functional defined in~\eqref{eq:area}. 
Then, for the functional
\[
  \Gcal[u] := \int_\Omega f(x,u(x)) \dd x,   \qquad u \in \Lrm^1(\Omega;\R^N) \cap \ker \A, 
\] 
the weakly* lower semicontinuous envelope of $\Gcal$, defined to be
\begin{align*}
  \overline{\Gcal}[\mu] := \inf\setB{\liminf_{j \to \infty} \Gcal[u_j]}{ (u_j) \subset \Lrm^1(\Omega;\R^N) \cap \ker \A, \; \text{$u_j \, \Lcal^d \toweakstar \mu$ in $\M(\Omega;\R^N)$}},
\end{align*}
is given by
\begin{align*}
  \overline{\Gcal}[\mu] &= \int_{\Omega} Q_{\A}f \biggl( x, \frac{\di \mu}{\di \Lcal^d}(x) \biggr)\dd x
  + \int_{\Omega} (Q_{\A}f)^\# \biggr( x, \frac{\di \mu^s}{\di |\mu^s|}(x)\biggr)\dd |\mu^s|(x).
\end{align*}
\end{theorem}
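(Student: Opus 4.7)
The plan is to prove $\Fcal_{Q_\A f}[\mu] \le \overline{\Gcal}[\mu] \le \Fcal_{Q_\A f}[\mu]$, where $\Fcal_{Q_\A f}[\mu]$ denotes the right-hand side in the statement. The lower bound is almost free from Theorem~\ref{thm:relax}: every sequence $(u_j) \subset \Lrm^1(\Omega;\R^N) \cap \ker \A$ admissible in Theorem~\ref{thm:relax2} trivially satisfies $\A u_j = 0$, hence in particular $\A u_j \to 0$ in $\Wrm^{-k,q}$, so it is also admissible in the wider class of Theorem~\ref{thm:relax}. Passing to the infimum over the smaller class can only increase the envelope, and Theorem~\ref{thm:relax} identifies the value on the larger class with $\Fcal_{Q_\A f}[\mu]$.

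For the upper bound I would combine the density hypothesis~\eqref{density} with a Reshetnyak-type continuity argument under area-strict convergence and a standard $\Lrm^1$-level $\A$-quasiconvex recovery. First, pick $(v_j) \subset \Lrm^1(\Omega;\R^N) \cap \ker \A$ as in~\eqref{density}, so that $v_j\Lcal^d \toweakstar \mu$ and $\area{v_j\Lcal^d}(\Omega) \to \area{\mu}(\Omega)$. Since $\mu \in \ker \A$, the structural theorem for $\A$-free measures (De~Philippis--Rindler) forces $\di \mu^s / \di |\mu^s| \in \Lambda_\A$ at $|\mu^s|$-a.e.\ point, and on $\Lambda_\A$ the $\A^k$-quasiconvex integrand $Q_\A f$ admits a bona fide strong recession function $(Q_\A f)^\infty$ (Corollary~\ref{cor:coneconvexity}) that coincides with $(Q_\A f)^\#$. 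This turns $\Fcal_{Q_\A f}$, restricted to $\Mcal(\Omega;\R^N) \cap \ker \A$, into a Reshetnyak-continuous functional under area-strict convergence in the spirit of Kristensen--Rindler, giving $\Fcal_{Q_\A f}[v_j\Lcal^d] \to \Fcal_{Q_\A f}[\mu]$. As $v_j\Lcal^d$ is purely absolutely continuous, $\Fcal_{Q_\A f}[v_j\Lcal^d] = \int_\Omega Q_\A f(x, v_j)\dd x$. Next, for each fixed $j$ I would approximate $v_j$ by a sequence $(w^{(j)}_k) \subset \Lrm^1(\Omega;\R^N) \cap \ker \A$ with $w^{(j)}_k\Lcal^d \toweakstar v_j\Lcal^d$ and $\int_\Omega f(x, w^{(j)}_k)\dd x \to \int_\Omega Q_\A f(x, v_j)\dd x$: this is the classical cell-by-cell construction gluing periodic $\A^k$-free test fields supplied by the definition of $Q_\A f$, with the constant-rank projection correcting the cut-off error to restore exact $\A$-freeness (here I use the homogeneity of $\A$ to identify $\A$ with $\A^k$). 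A diagonal extraction across $k$ and $j$ yields a single sequence $(\tilde w_j) \subset \Lrm^1(\Omega;\R^N) \cap \ker \A$ with $\tilde w_j\Lcal^d \toweakstar \mu$ and $\limsup_j \Gcal[\tilde w_j] \le \Fcal_{Q_\A f}[\mu]$, which is the desired bound.

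The main obstacle I anticipate is the area-strict Reshetnyak continuity of $\Fcal_{Q_\A f}$ on $\Mcal(\Omega;\R^N) \cap \ker \A$. Classical Reshetnyak continuity demands that the recession integrand be a genuine limit, whereas $(Q_\A f)^\#$ is only a $\limsup$; the rescue is precisely the confinement $\di\mu^s/\di|\mu^s| \in \Lambda_\A$ supplied by the De~Philippis--Rindler dimensional/structural theorem, along which $\A^k$-quasiconvexity of $Q_\A f$ upgrades $(Q_\A f)^\#$ to a strong recession. Once this is in hand, the remaining delicate point is checking that the cell-by-cell $\Lrm^1$-recovery for $Q_\A f$ can be carried out within $\ker \A$ \emph{exactly}—not merely asymptotically, as in Theorem~\ref{thm:relax}—but for homogeneous $\A$ of constant rank this is the standard output of the Fonseca--M\"uller projection machinery applied in the periodic setting, so it should cause no new conceptual difficulty.
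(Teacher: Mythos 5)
Your overall strategy tracks the paper's closely: lower bound via the wider-class envelope of Theorem~\ref{thm:relax}, then an upper bound built from the density hypothesis~\eqref{density}, a cell-by-cell $\A$-quasiconvex recovery on absolutely continuous fields with exact $\A$-freeness restored by the Fonseca--M\"{u}ller projection (this is exactly what the paper does via Remark~\ref{rem:project}), and a diagonal extraction. The lower bound argument is correct and, if anything, slightly more streamlined than the paper's.

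The gap is in the area-strict step. You invoke ``Reshetnyak continuity'' of $\Fcal_{Q_\A f}$ under area-strict convergence, rescued by the confinement $\di\mu^s/\di|\mu^s| \in \Lambda_\A$ from De~Philippis--Rindler. This is not justified as stated: Theorem~\ref{thm:Reshet} applies to integrands $g \in \E(\Omega;\R^N)$, which requires the \emph{strong} recession $g^\infty$ to exist on all of $\cl\Omega \times \R^N$ in a uniform sense (so that $Sg$ extends continuously to $\cl{\Omega \times \Bbb^N}$). The $\A^k$-quasiconvexity of $Q_\A f$ only gives $\Lambda_\A$-convexity and hence existence of $(Q_\A f)^\infty$ \emph{on} $\Lambda_\A$ — not on $\R^N \setminus \Lambda_\A$ — so $Q_\A f$ need not lie in $\E(\Omega;\R^N)$ at all, and the confinement of the barycenter of the singular part of $\mu$ to $\Lambda_\A$ does not repair the missing hypothesis of Theorem~\ref{thm:Reshet}. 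The fix is simple and is what the paper actually does: you do not need two-sided continuity, only the one-sided estimate
\[
  \limsup_{j\to\infty} \int_\Omega Q_\A f(x,v_j(x)) \dd x \;\le\; \int_\Omega Q_\A f\bigl(x,\tfrac{\di\mu}{\di\Lcal^d}\bigr)\dd x + \int_\Omega (Q_\A f)^\#\bigl(x,\tfrac{\di\mu^s}{\di|\mu^s|}\bigr)\dd|\mu^s|,
\]
and this is exactly the upper-semicontinuity assertion of Corollary~\ref{cor:representation} applied to the upper semicontinuous integrand $Q_\A f$ along $v_j\Lcal^d \toY \delta[\mu]$ (Remark~\ref{rem:strict}). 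No recession-existence hypothesis and no $\Lambda_\A$-confinement are needed for this step. A second, smaller omission: the cell-by-cell estimate for the upper bound on absolutely continuous fields in the paper (Step~3.a of Theorem~\ref{thm:relax}) is carried out first under the strengthened condition~\eqref{eq:modulus_strong} and then relaxed to~\eqref{eq:modulus} by the $f^\eps = f + \eps|\cdot|$ truncation in Step~4; your sketch silently uses~\eqref{eq:modulus} throughout, which does not suffice to control the $x$-dependence in the quasiconvexification estimate without that extra reduction.
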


\begin{remark}[density assumptions]\label{rem:assumption} Condition~\eqref{density} is automatically fulfilled in the following cases:
\begin{itemize}
	\item[(i)] For $\A = \curl$, the approximation property (for general domains) is proved in the appendix of~\cite{KristensenRindler10} (also see Lemma~B.1 of~\cite{Bildhauer03book} for Lipschitz domains). The same argument further shows the area-strict approximation property in the BD-case (also see Lemma~2.2 in~\cite{BarrosoFonsecaToader00} for a result which covers the strict convergence).
	
%
%
	\item[(ii)] If $\Omega$ is a {\it strictly star-shaped} domain, i.e., there exists $x_0 \in \Omega$ such that 
	\[
	\overline{(\Omega - x_0)} \subset t(\Omega - x_0) \qquad \text{for all $t>1$,}
	\]
	then~\eqref{density} holds for every homogeneous operator $\A$. Indeed, for $t>1$ we can  consider the  dilation of $\mu$ defined on  $t(\Omega - x_0)$ and then mollify it at a sufficiently small scale. We refer for instance to~\cite{Muller87} for details.
\end{itemize}
\end{remark}

As a consequence of Theorem~\ref{thm:relax2} and of Remark~\ref{rem:assumption} we explicitly state the following corollary, which extends the lower semicontinuity result of~\cite{Rindler11} into a full relaxation result. The only other relaxation result in this direction, albeit for \emph{special} functions of bounded deformation, seems to be in~\cite{BarrosoFonsecaToader00}; other results in this area are discussed in~\cite{Rindler11} and the references therein.

\begin{corollary} \label{cor:BD}
Let $f \colon \Omega \times \R^{d\times d}_\sym  \to [0,\infty)$ be a continuous integrand, uniformly Lipschitz in the second argument, with linear growth at infinity, and such that there exists a modulus of continuity $\omega$ as in~\eqref{eq:modulus}.
Further, suppose that the strong recession function
\[
\text{$f^\infty(x,A)$ \, exists for all \, $(x,A) \in\Omega \times \R^{d\times d}_\sym$}.
\]
Consider the functional 
\[
  \Gcal[u] := \int_\Omega f(x, \Ecal u(x)) \dd x,
\]
defined for $u \in \LD(\Omega) := \setn{ u \in \BD(\Omega) }{ E^s u = 0 }$, where $Eu \coloneqq (Du+Du^T)/2 \in \Mcal(\Omega;\R^{d\times d}_\sym)$ is the symmetrized distributional derivative of $u \in \BD(\Omega)$ and where
\[
  Eu = \Ecal u \, \Lcal^d \restrict \Omega + E^su
\]
is its Radon--Nikod\'{y}m decomposition with respect to $\Lcal^d$.

Then, the lower semicontinuous envelope of \(\Gcal\) 
with respect to weak*-convergence in \(\BD(\Omega)\) is given by the functional
\begin{align*}
  \overline{\Gcal}[u] &\coloneqq \int_{\Omega} SQf ( x,  \Ecal u(x))\dd x
  + \int_{\Omega} (SQf)^\# \biggr( x, \frac{\di E^su }{\di |E^su|}(x)\biggr)\dd |E ^su |(x),
\end{align*}
where $SQf$ denotes the \emph{symmetric-quasiconvex envelope} of $f$ with respect to the second argument (i.e.,\ the $\curl\curl$-quasiconvex envelope of $f(x,\frarg)$ in the sense of Definition~\ref{def:quasi}).
\end{corollary}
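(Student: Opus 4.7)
The plan is to apply Theorem~\ref{thm:relax2} with the homogeneous second-order operator $\A = \curl\curl$ acting on $\R^{d\times d}_\sym$-valued measures, and then to transfer the abstract conclusion to the $\BD$ setting via the Saint-Venant compatibility conditions. The proof is essentially a dictionary translation between $\curl\curl$-free symmetric-matrix-valued measures and symmetrized distributional derivatives of $\BD$ functions.

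First I would verify that the abstract hypotheses of Theorem~\ref{thm:relax2} are met. The operator $\curl\curl$ is homogeneous of order $k=2$ and satisfies Murat's constant rank condition; see \cite[Example~3.10(e)]{FonsecaMuller99}. Its wave cone $\Lambda_{\curl\curl}$ coincides with the cone of symmetric rank-one tensors $\setn{a\odot b}{a,b\in\R^d}$, whose linear span is all of $\R^{d\times d}_\sym$. Consequently, the assumption that $f^\infty$ exists on $\Omega\times\R^{d\times d}_\sym$ is precisely the hypothesis on $\Omega\times\spn\Lambda_\A$ required in Theorem~\ref{thm:relax2}, and by definition the $(\curl\curl)$-quasiconvex envelope $Q_{\curl\curl}f$ is $SQf$.

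Next I would set up the identification. By Saint-Venant compatibility, a measure $\mu\in\M(\Omega;\R^{d\times d}_\sym)$ lies in $\ker(\curl\curl)$ in the sense of distributions if and only if $\mu=Eu$ for some $u\in\BD(\Omega)$, uniquely determined up to an infinitesimal rigid motion. Under this identification, $\Lrm^1(\Omega;\R^{d\times d}_\sym)\cap\ker(\curl\curl)$ corresponds exactly to $\setn{Eu}{u\in\LD(\Omega)}$, the original functional $\int_\Omega f(x,\Ecal u)\dd x$ coincides with the abstract functional evaluated at $Eu$, and weak* convergence $Ev_j\,\Lcal^d\toweakstar Eu$ in $\M(\Omega;\R^{d\times d}_\sym)$ is equivalent (after subtracting a suitable rigid motion from each $v_j$, which leaves both $Ev_j$ and $\Gcal[v_j]$ invariant and can be arranged using Korn's inequality) to weak* convergence $v_j\toweakstar u$ in $\BD(\Omega)$.

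Finally I would check the density assumption~\eqref{density}: for every $u\in\BD(\Omega)$ there exists a sequence $(v_j)\subset\LD(\Omega)$ such that $Ev_j\,\Lcal^d\toweakstar Eu$ in $\M(\Omega;\R^{d\times d}_\sym)$ and $\area{Ev_j\,\Lcal^d}(\Omega)\to\area{Eu}(\Omega)$. This is precisely the area-strict approximation of $\BD$ by $\LD$ mentioned in Remark~\ref{rem:assumption}(i), obtained by the mollification construction from the appendix of~\cite{KristensenRindler10}. Once these translations are in hand, Theorem~\ref{thm:relax2} applied to $\A=\curl\curl$ produces verbatim the claimed formula for $\overline{\Gcal}$. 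I expect no substantive obstacle: the constant-rank property of $\curl\curl$, the Saint-Venant correspondence, and the $\BD$ area-strict density result are all standard, and the only ``real work'' in the corollary is already carried out inside Theorem~\ref{thm:relax2}.
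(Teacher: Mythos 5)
Your proposal is correct and follows essentially the same route the paper intends: Corollary~\ref{cor:BD} is stated in the paper as a direct consequence of Theorem~\ref{thm:relax2} applied with $\A=\curl\curl$, combined with the BD area-strict approximation property noted in Remark~\ref{rem:assumption}(i), and your write-up simply fills in the translation details. One small imprecision worth noting: your ``if and only if'' formulation of Saint-Venant compatibility (every distributionally $\curl\curl$-free measure is $Eu$ for some $u\in\BD$) requires topological hypotheses on $\Omega$ such as simple connectedness; however, this converse direction is not actually needed, since the lower bound only uses that each $Eu$ is $\curl\curl$-free, and the recovery sequences constructed in the proof of Theorem~\ref{thm:relax2} are periodic-plus-constant fields, hence automatically symmetrized gradients, so the upper bound lands in $\LD(\Omega)$ without appealing to the converse.
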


Our proofs are based on new tools to study singularities in PDE-constrained measures. Concretely, we exploit the recent developments on the structure of $\A$-free measures obtained in~\cite{DePhilippisRindler16}. We remark that the study of the singular part -- up to now the most complicated argument in  the proof -- now only requires a fairly straightforward (classical) convexity argument. More precisely, the main theorem of~\cite{KirchheimKristensen16} establishes that the restriction of $f^\#$ to 
the linear space spanned by  
the wave cone
is in fact \emph{convex} at all points of  \(\Lambda_\A\)
(in the sense that a supporting hyperplane exists). By~\cite{DePhilippisRindler16}, 
\begin{equation}\label{eq:sinwc}
  \frac{\di\mu^s}{\di|\mu^s|}(x) \in \Lambda_\A  \qquad
  \text{for $\abs{\mu^s}$-a.e.\ $x \in \Omega$.}
\end{equation}
Thus, combining these two assertions, we gain classical convexity for  $f^\#$ at singular points, 
which can be exploited via the theory of 
generalized Young measures developed in~\cite{DiPernaMajda87,AlibertBouchitte97,KristensenRindler10}.

\begin{remark}[different notions of recession function] \label{rmk:rec}
Note that both in Theorem~\ref{thm:lsc} and Theorem~\ref{thm:relax} the existence of the {\em strong} recession function \(f^\infty\) is assumed, in contrast with the results in~\cite{AmbrosioDalMaso92, FonsecaMuller93, BaiaChermisiMatiasSantos13} where this is not imposed. 

The need for this assumption comes from the use of Young measure techniques 
which seem to be  better suited to deal with the singular part of the measure, as we already discussed above. In the aforementioned references a direct blow up approach is instead performed and this allows to deal directly with the functional in~\eqref{Fintro}. 
The blow-up techniques, however, rely strongly on the fact that  \(\A\) is a homogeneous first-order operator. Indeed, it is not hard to check that for all ``elementary'' \(\A\)-free measures of the form
\[
\mu=P_0 \lambda, \qquad \text{where}\qquad
P_0\in \Lambda_\A,\; \lambda\in \Mcal^+(\R^d),
\]
the scalar measure \(\lambda\) is necessarily translation invariant along orthogonal directions to the {\em characteristic set}
\[
\Xi(P_0) \coloneqq \setb{\xi \in \R^d}{P_0 \in \ker \Abb(\xi)},
\]
which turns out to be a subspace of $\R^d$ whenever $\A$ is a first-order operator. 
The subspace structure and the aforementioned translation invariance is then used to perform homogenization-type arguments. Due to the lack of linearity of the map 
\[
\xi \mapsto \Abb^k(\xi) \quad \text{for $k > 1$},
\] 
the structure of elementary \(\A\)-free measures for general operators is more complicated and not yet fully understood (see however~\cite{Rindler11, DePhilippisRindler16b} for the case \(\A=\curl\curl\)). 
This prevents, at the moment, the use of ``pure'' blow-up techniques and forces us to pass through the combination of the results of~\cite{DePhilippisRindler16,KirchheimKristensen16} 
with the Young measure approach.
\end{remark}



This paper is organized as follows: First, in Section~\ref{sc:notation}, we introduce all the necessary notation and prove auxiliary results. Then, in Section~\ref{sc:Jensen}, we establish the central Jensen-type inequalities, which immediately yield the proof of Theorems~\ref{thm:lsc} and~\ref{thm:lsc_partial} in  Section~\ref{sc:lsc_special}. The proofs of Theorems~\ref{thm:relax} and~\ref{thm:relax2} are given in  Section~\ref{sc:relax_proof}. 

\subsection*{Acknowledgments} 

A.~A.-R.\ is supported by a scholarship from the Hausdorff Center of Mathematics and the University of Bonn through a DFG grant; the research conducted in this paper forms part of his Ph.D.\ thesis at the University of Bonn. G.~D.~P.\ is supported by the MIUR SIR-grant ``Geometric Variational Problems" (RBSI14RVEZ). F.~R.\ acknowledges the support from an EPSRC Research Fellowship on ``Singularities in Nonlinear PDEs'' (EP/L018934/1).

The authors would like to thank the anonymous referee for her/his careful reading of the manuscript which led to a substantial improvement of the presentation.

\section{Notation and  preliminaries} \label{sc:notation}
%
We write $\M(\Omega;\R^N)$ and $\M_\loc(\Omega;\R^N)$ to denote the spaces of \emph{bounded Radon measures} and \emph{Radon measures} on $\Omega\subset \R^d$ and with values in $\R^N$, which are the duals of $\Crm_0(\Omega;\R^N)$ and $\Crm_c(\Omega;\R^N)$ respectively. Here, $\Crm_0(\Omega;\R^N)$ is the completion of $\Crm_c(\Omega;\R^N)$ with respect to the $\|\frarg\|_\infty$-norm, and, in the second case, $\Crm_c(\Omega;\R^N)$ is understood as the inductive limit of the Banach spaces $\Crm_0(K_m)$ where each $K_m$ is a compact subset of $\R^d$ and $K_m \nearrow \Omega$. The set of \emph{probability measures} over a locally compact space $X$ shall be denoted by 
\[
\M^1(X) \coloneqq \setB{\mu \in \M(X)}{\text{$\mu$ is a positive measure, and $\mu(X) = 1$}}.
\] 
We will often make use of the following metrizability principles:
\begin{enumerate}
\item Bounded sets of $\M(\Omega;\R^N)$ are metrizable in the sense that there exists a metric $d$ which induces the weak* topology, that is,
\[
\qquad \sup_{j \in \Nbb} |\mu_j|(\Omega) < \infty \quad \text{and} \quad d(\mu_j,\mu) \to 0 \quad \Leftrightarrow \quad \mu_j \toweakstar \mu \quad \text{in $\M(\Omega;\R^N)$}.
\]
\item There exists a complete metric $d$ on $\M_\loc(\Omega;\R^N)$. Moreover, convergence with respect to this metric coincides with the weak* convergence of Radon measures (see Remark 14.15 in \cite{Mattila95book}).
\end{enumerate}

We write the Radon--Nikod\'{y}m decomposition of a measure $\mu \in \Mcal(\Omega;\R^N)$ as
\begin{equation} \label{eq:RN}
\mu = \frac{\di \mu}{\di \Lcal^d}\, \Lcal^d \restrict \Omega + \mu^s,
\end{equation}
where $\frac{\di \mu}{\di \Lcal^d} \in \Lrm^1(\Omega;\R^N)$ and $\mu^s \in \M(\Omega;\R^N)$ is singular with respect to $\Lcal^d$.

 

\subsection{Integrands and Young measures }\label{sec: Young}
For $f \in \Crm(\Omega \times \R^N$) we define the transformation 
\[
(Sf)(x,\hat{A}) := (1 - |\hat{A}|)f\left(x,\frac{\hat{A}}{1 - |\hat{A}|}\right), \qquad (x,\hat A) \in \cl{\Omega}\times  \Bbb^N,
\]
where $\Bbb^N$ denotes the open  unit ball in $\R^N$. Then, $Sf \in \Crm(\Omega \times \Bbb^N)$. We set 
\[
\E(\Omega;\R^N):= \setb{ f \in \Crm(\Omega \times \R^N) }{ \text{$Sf$ extends to $\Crm(\overline{\Omega \times  \Bbb^N})$} }.
\]
In particular, all $f \in \e(\Omega;\R^N)$ have linear growth at infinity, i.e., there exists a positive constant $M$ such that
$|f(x,A)| \leq M(1 + |A|)$ for all $x \in \Omega$ and all $A \in \R^N$. With the norm 
\[
\| f \|_{ \e(\Omega;\R^N)} := \| Sf \|_\infty, \qquad f \in  \e(\Omega;\R^N),
\]
the space $\e(\Omega;\R^N)$ turns out to be a Banach space. 
Also, by definition, for each $f \in \e(\Omega;\R^N)$ the limit
\[
f^\infty(x,A) := \lim_{\substack {x' \to x \\ A' \to A \\ t \to \infty}} \frac{f(x',tA')}{t}, \qquad (x,A) \in \cl{\Omega} \times \R^N,
\]
exists and defines a positively $1$-homogeneous function called the {\it strong recession function} of $f$. 
 Even if one drops the dependence on $x$,  the recession function $h^\infty$ might not exist for  $h \in \Crm(\R^N)$. 
 Instead, one can always define the {\it upper} and {\it lower recession functions} 
\begin{align*}
f^{\#}(x,A) &:= \limsup_{\substack {x' \to x\\A' \to A \\ t \to \infty}} \,\frac{f(x',tA')}{t},  \\
f_\#(x,A) &:= \liminf_{\substack {x' \to x\\A' \to A \\ t \to \infty}} \, \frac{f(x',tA')}{t},
\end{align*}
which again can be seen to be positively $1$-homogeneous. 
If $f$ is $x$-uniformly Lipschitz continuous in the $A$-variable and there exists a modulus of continuity $\omega \colon [0,\infty) \to [0,\infty)$ (increasing, continuous, and $\omega(0) = 0$) such that
\[
|f(x,A) - f(y,A)|\le \omega(|x - y|)(1 + |A|),  \qquad x,y \in \Omega, \, A \in \R^N,
\] 
then the definitions of $f^\infty$, $f^\#$, and $f_\#$ simplify to
\begin{align*}
f^\infty(x,A) &:= \lim_{\substack {t \to \infty}} \,\frac{f(x,tA)}{t}, \\
f^\#(x,A) &:= \limsup_{\substack {t \to \infty}} \, \frac{f(x,tA)}{t},\\
f_\#(x,A) &:= \liminf_{\substack {t \to \infty}} \, \frac{f(x,tA)}{t}.
\end{align*} 

A natural action of $\E(\Omega;\R^N)$ on the space $\M(\Omega;\R^N)$ is given by
	\[
	\mu \mapsto \int_{\Omega} f\left(x,\frac{\di \mu}{\di \Lcal^N}(x)\right) \, \textnormal{d}x + \int_{\Omega} f^\infty\left(x,\frac{\di \mu^s}{\di |\mu^s|}(x)\right) \, \textnormal{d}|\mu^s|(x).
	\]
In particular, for $f(x,A) = \sqrt{1 + |A|^2} \in \E(\Omega;\R^N)$, for which $f^\infty(A) = |A|$, 
we define the {\it  area functional}
\begin{equation}\label{eq:area}
\area{\mu}(\Omega) := \int_{\Omega} \sqrt{1 + \Big|\frac{\di \mu}{\di \Lcal^N}\Big|^2} \dd x + |\mu^s|(\Omega), \qquad \mu \in \M(\Omega;\R^N).
\end{equation}

In addition to the well-known weak* convergence of measures, we say that a sequence $(\mu_j)$ \emph{converges area-strictly} to $\mu$ in $\M(\Omega;\R^N)$ if 
\[\text{$\mu_j \toweakstar \mu$ in $\M(\Omega;\R^N)$} \qquad \text{and} \qquad 
\area{\mu_j}(\Omega) \to \area{\mu}(\Omega).\]
This notion of convergence turns out to be stronger than the conventional {\it strict convergence} of measures, which means that
 \[
 \mu_j \toweakstar \mu \quad \text{in $\M(\Omega;\R^N)$} \qquad \text{and} \qquad 
  |\mu_j|(\Omega) \to |\mu|(\Omega).
 \]
Indeed, the area-strict convergence, as opposed to the usual strict convergence, prohibits oscillations of the absolutely continuous part. The meaning of area-strict convergence becomes clear when considering the following version of Reshetnyak's continuity theorem, which entails that the topology generated by area-strict convergence is the coarsest topology under which the natural action of $\E(\Omega;\R^N)$ on $\M(\Omega;\R^N)$ is continuous.

\begin{theorem}[Theorem~5 in~\cite{KristensenRindler10Relax}] \label{thm:Reshet}For every integrand $f \in \E(\Omega;\R^N)$, the functional  
	\[
	\mu \mapsto \int_{\Omega} f\left(x,\frac{\di \mu}{\di \Lcal^N}(x)\right) \, \textnormal{d}x + \int_{\Omega} f^\infty\left(x,\frac{\di \mu^s}{\di |\mu^s|}(x)\right) \, \textnormal{d}|\mu^s|(x),
	\]
	is area-strictly continuous on $\M(\Omega;\R^N)$.
\end{theorem}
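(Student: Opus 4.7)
The plan is to reduce the claim to the classical Reshetnyak continuity theorem for strictly converging vector measures via a \emph{perspective lift} into one extra dimension. Given $\mu \in \M(\Omega;\R^N)$, I would set
\[
\tilde\mu := (\mu,\; \Lcal^d \restrict \Omega) \in \M(\Omega;\R^{N+1}).
\]
A direct computation of the Radon--Nikod\'ym decomposition of $\tilde\mu$ gives $|\tilde\mu|(\Omega) = \area{\mu}(\Omega)$, and since $\mu_j \toweakstar \mu$ in $\M(\Omega;\R^N)$ trivially entails $\tilde\mu_j \toweakstar \tilde\mu$ in $\M(\Omega;\R^{N+1})$, one obtains the equivalence: $\mu_j \to \mu$ area-strictly in $\M(\Omega;\R^N)$ if and only if $\tilde\mu_j \to \tilde\mu$ (classically) strictly in $\M(\Omega;\R^{N+1})$.

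Next, to each $f \in \E(\Omega;\R^N)$ I would associate its positively $1$-homogeneous perspective $\tilde f \colon \cl\Omega \times \R^{N+1} \to \R$ by
\[
\tilde f(x,A,t) := \begin{cases} |t|\, f(x,\, A/|t|) & \text{if } t \neq 0,\\ f^\infty(x,A) & \text{if } t = 0. \end{cases}
\]
The key structural fact is that the defining property of $\E(\Omega;\R^N)$ --- existence of a continuous extension of $Sf$ to $\overline{\Omega \times \Bbb^N}$ --- is equivalent to $\tilde f$ being continuous on $\cl\Omega \times \R^{N+1}$ (in particular on the compact set $\cl\Omega \times \Sbb^N$). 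The two parametrizations are related by the bijection $\hat A \leftrightarrow A/(1-|\hat A|)$ between $\Bbb^N$ and $\R^N$, and the boundary values both coincide with $f^\infty$.

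A short calculation based on the polar decomposition of $\tilde\mu$ --- whose density $\di\tilde\mu/\di|\tilde\mu|$ has last coordinate $(1 + |\di\mu/\di\Lcal^d|^2)^{-1/2}$ on the absolutely continuous part and last coordinate $0$ on the singular part --- combined with positive $1$-homogeneity of $\tilde f$ in $(A,t)$, yields
\[
\int_\Omega \tilde f\biggl(x,\frac{\di\tilde\mu}{\di|\tilde\mu|}(x)\biggr) \di|\tilde\mu|(x) = \int_\Omega f\biggl(x,\frac{\di\mu}{\di\Lcal^d}(x)\biggr)\dd x + \int_\Omega f^\infty\biggl(x,\frac{\di\mu^s}{\di|\mu^s|}(x)\biggr)\dd|\mu^s|(x).
\]
Thus the functional in the statement is precisely the classical Reshetnyak functional evaluated at $\tilde\mu$ with integrand $\tilde f$. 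Invoking the classical Reshetnyak continuity theorem for strict convergence of vector measures, applied to $\tilde\mu_j \to \tilde\mu$ and to the continuous, positively $1$-homogeneous integrand $\tilde f$, then gives area-strict continuity of the original functional on $\M(\Omega;\R^N)$.

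The main technical point --- and essentially the only non-routine step --- is establishing the dictionary $f \in \E(\Omega;\R^N) \leftrightarrow \tilde f \in \Crm(\cl\Omega \times \Sbb^N)$, which requires verifying continuity of $\tilde f$ across the hyperplane $\{t = 0\}$ and carefully translating between the two one-point compactifications of $\R^N$ used in the definition of $\E$ and in the lift. Once this correspondence is in place, everything else is a change of variables and an appeal to the classical result.
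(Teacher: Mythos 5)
The paper does not prove this statement: it is taken verbatim (as Theorem~5) from~\cite{KristensenRindler10Relax}, so there is no ``paper's own proof'' to compare against. On its merits, your perspective-lift argument is correct and is essentially the standard route to this result. The bookkeeping all checks out: $|\tilde\mu|(\Omega)=\area{\mu}(\Omega)$, hence area-strict convergence of $(\mu_j)$ is exactly strict convergence of $(\tilde\mu_j)$; the density $\di\tilde\mu/\di|\tilde\mu|$ has last component $(1+|\di\mu/\di\Lcal^d|^2)^{-1/2}>0$ on the absolutely continuous part and $0$ on the singular part, so by positive $1$-homogeneity of $\tilde f$ the lifted Reshetnyak integral reproduces exactly the functional in the statement. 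The correspondence $f\in\E(\Omega;\R^N)\leftrightarrow\tilde f\in\Crm(\cl\Omega\times\Sbb^N)$ that you flag as the one non-routine step is indeed the heart of the matter, and it does hold: writing $\tilde f(x,A,t)=(|A|+t)\,Sf\bigl(x,A/(|A|+t)\bigr)$ for $t>0$ (and analogously for $t<0$) shows that joint continuity of $\tilde f$ up to $\{t=0\}$ is equivalent to continuity of the extension of $Sf$ to $\overline{\Omega\times\Bbb^N}$, with boundary values matching $f^\infty$; one should also note $f^\infty(x,0)=0$ by $1$-homogeneity so $\tilde f$ vanishes continuously at the origin. Once this is in place, invoking the classical Reshetnyak continuity theorem (for continuous, positively $1$-homogeneous integrands on $\cl\Omega\times\Sbb^N$ under strict convergence in $\M(\Omega;\R^{N+1})$) completes the argument. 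Two very minor remarks: since the last component of $\di\tilde\mu/\di|\tilde\mu|$ is always nonnegative, it would suffice to define $\tilde f$ on the closed half-space $\{t\ge0\}$, avoiding the $|t|$ entirely; and it is worth recording explicitly that continuity of $\tilde f$ in the $x$-variable up to $\cl\Omega$ is part of the $\E$-condition, which you use implicitly when applying Reshetnyak on $\Omega$.
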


\begin{remark}\label{rem:area}
Notice that if $\mu\in \Mcal(\R^d;\R^N)$, then $\mu_\varepsilon \to \mu$ area-strictly, where $\mu_{\varepsilon} $ is the mollification of $\mu$ with a family of standard convolution kernels, $\mu_{\varepsilon}\coloneqq \mu\ast\rho_\varepsilon$ and $\rho_\varepsilon(x) \coloneqq \varepsilon^{-d}\rho(x/\varepsilon)$ for $\rho\in \Crm_c^\infty (B_1)$ a positive and even function satisfying {$\int \varrho \dd x =1$}.
\end{remark}


Generalized Young measures form a set of dual objects to the integrands in~$\e(\Omega;\R^N)$. We recall briefly some aspects of this theory, which was introduced by DiPerna and Majda in~\cite{DiPernaMajda87} and later extended in~\cite{AlibertBouchitte97,KristensenRindler10}.
\begin{definition}[generalized Young measure]
A generalized Young measure, parameterized by an open set $\Omega \subset \R^d$, and with values in $\R^N$, is a triple $\nu = (\nu_x,\lambda_\nu,\nu_x^\infty)$, where
\begin{itemize}
\item[(i)] $(\nu_x)_{x \in \Omega} \subset \Mcal^1(\R^N)$ is a parameterized  family of probability measures on $\R^N$, \vskip5pt
\item[(ii)] $\lambda_\nu \in \M_+(\cl{\Omega})$ is a positive finite Radon  measure on $\cl{\Omega}$, and\vskip5pt
\item[(iii)] $(\nu_x^\infty)_{x \in \cl{\Omega}} \subset \Mcal^1(\Sbb^{N-1})$ is a parametrized family of probability measures on the unit sphere $\Sbb^{N-1}$.
\end{itemize}
Additionally, we require that
\begin{itemize}
\item[(iv)] the map $x \mapsto \nu_x$ is weakly* measurable with respect to $\Lcal^d$, \vskip5pt
\item[(v)] the map $x \mapsto \nu_x^\infty$ is weakly* measurable with respect to $\lambda_\nu$, and \vskip5pt
\item[(vi)] $x \mapsto \dprb{ |\frarg|,\nu_x} \in \Lrm^1(\Omega)$. 
\end{itemize}
The set of all such Young measures is denoted by $\Y(\Omega;\R^N)$. 

Similarly we say that $\nu \in \Y_\loc(\Omega;\R^N)$ if $\nu \in \Y(E;\R^N)$ for all open $E \Subset \Omega$.
\end{definition}

Here, weak* measurability means that the functions $x \mapsto \dpr{ f(x,\frarg),\nu_x}$ (respectively $x \mapsto \dpr{f^\infty (x,\frarg),\nu^\infty_x}$) are Lebesgue-measurable (respectively $\lambda_\nu$-measurable) for all Carath\'{e}odory integrands $f \colon {\Omega} \times \R^N \to \R$ (measurable in their first argument and continuous in their second argument).

For an integrand $f \in \E(\Omega;\R^N)$ and a Young measure $\nu \in \Y(\Omega;\R^N)$, we define the \emph{duality paring} between $f$ and $\nu$ as follows:
\[
\ddprb{f, \nu} := \int_{\Omega} \dprb{ f(x,\frarg),\nu_x} \dd x + \int_{\cl\Omega}  
\dprb{ f^\infty (x,\frarg),\nu^\infty_x} \dd\lambda_\nu(x).
\]

In many cases it will be sufficient to work with functions $f \in \e(\Omega;\R^N)$ that are Lipschitz continuous. The following density lemma can be found in~\cite[Lemma~3]{KristensenRindler10}.
\begin{lemma}\label{lem:separation}
There exists a countable set of functions $\{f_m\} = \{\varphi_m\otimes h_m \in \Crm(\cl{\Omega}) \times 
\Crm(\R^N) : m \in \N\} \subset \E(\Omega;\R^N)$ such that for two Young measures $\nu_1, \nu_2 \in \Y(\Omega;\R^N)$ the implication
\[
\text{$\ddpr{ f_m,\nu_1} = \ddpr{ f_m,\nu_2}\quad \forall \, m \in \N \qquad \Longrightarrow \qquad \nu_1 = \nu_2$}
\]
holds.
Moreover, all the $h_m$ can be chosen to be Lipschitz continuous and all the \(\varphi_m\) can be chosen to be non-negative.
\end{lemma}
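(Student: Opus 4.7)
The plan is to identify $\E(\Omega;\R^N)$ with $\Crm(\cl\Omega \times \cl{\Bbb^N})$ via the isometric isomorphism $f \mapsto Sf$, reducing the problem to a density question on a compact metric space, where Stone--Weierstrass and standard separability arguments apply. Once a countable tensor-product family with dense linear span is produced in $\E(\Omega;\R^N)$, the continuity of the Young-measure duality pairing will promote density into separation.

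First I would verify that $S$ is a linear isometric bijection from $\E(\Omega;\R^N)$ onto $\Crm(\cl\Omega \times \cl{\Bbb^N})$, with inverse given by $(S^{-1}g)(x,A) = (1+|A|)\, g(x, A/(1+|A|))$, and that it sends tensor products to tensor products: $S^{-1}(\psi \otimes \chi) = \psi \otimes (S^{-1}\chi)$. Since $\cl\Omega \times \cl{\Bbb^N}$ is compact and metric, $\Crm(\cl\Omega \times \cl{\Bbb^N})$ is separable, and by Stone--Weierstrass the tensor products $\psi \otimes \chi$ have dense linear span therein. I would then select a countable family $\{\psi_m\} \subset \Crm(\cl\Omega)$ of non-negative functions whose linear span is dense (possible because the positive cone of $\Crm(\cl\Omega)$ generates the whole space linearly and is separable in the sup norm), together with a countable family $\{\chi_n\} \subset \Crm(\cl{\Bbb^N})$ of Lipschitz functions dense in $\Crm(\cl{\Bbb^N})$. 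Setting $\varphi_m := \psi_m$, $h_n := S^{-1}\chi_n$, and re-indexing the pair $(m,n)$ as a single index, the products $\{\varphi_m \otimes h_n\}$ have dense linear span in $\E(\Omega;\R^N)$; a direct gradient estimate shows that $h_n$ is globally Lipschitz on $\R^N$, because the linear growth of the factor $1 + |A|$ is exactly compensated by the $(1+|A|)^{-1}$-decay of the differential of $A \mapsto A/(1+|A|)$.

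To promote density into separation I would use that the pairing $f \mapsto \ddpr{f,\nu}$ is linear and continuous on $\E(\Omega;\R^N)$: the pointwise estimates $|f(x,A)| \le \|f\|_{\E}(1+|A|)$ and $|f^\infty(x,A)| \le \|f\|_{\E}|A|$ yield
\[
|\ddpr{f,\nu}| \;\le\; \|f\|_{\E} \left( \Lcal^d(\Omega) + \int_\Omega \dprb{|\cdot|, \nu_x} \dd x + \lambda_\nu(\cl\Omega) \right),
\]
a finite quantity by conditions (ii) and (vi) of the definition of a Young measure. Hence the hypothesis $\ddpr{f_m,\nu_1} = \ddpr{f_m,\nu_2}$ for every $m$ extends by linearity and continuity to $\ddpr{f,\nu_1} = \ddpr{f,\nu_2}$ for all $f \in \E(\Omega;\R^N)$. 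Testing with $\varphi \otimes \chi$ where $\chi \in \Crm_c(\R^N)$ (so $f^\infty \equiv 0$) and applying Fubini and the Riesz representation theorem give $\nu_{1,x} = \nu_{2,x}$ for $\Lcal^d$-a.e.\ $x$; testing then with integrands whose recession on $\Sbb^{N-1}$ is a prescribed $\chi \in \Crm(\Sbb^{N-1})$ determines $\dprb{\chi,\nu_x^\infty}\,\lambda_\nu$ as a bounded Radon measure on $\cl\Omega$ for every such $\chi$, which (taking first $\chi \equiv 1$ and then varying) fixes both $\lambda_\nu$ and the family $(\nu_x^\infty)$. The main delicate point I anticipate is the bookkeeping required to realize simultaneously the non-negativity of the $\varphi_m$, the Lipschitz regularity of the $h_n$ on the non-compact space $\R^N$, and the density of the linear span in $\E(\Omega;\R^N)$; in particular, the Lipschitz character of $h_n$ is not immediate from that of $\chi_n$ and relies on the gradient computation noted above.
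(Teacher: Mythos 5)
The paper does not give a proof of this lemma; it cites it as Lemma 3 of Kristensen--Rindler \cite{KristensenRindler10}, so there is no in-paper argument to compare against. Your proof is correct and follows the standard route of that reference: the isometric identification of $\E(\Omega;\R^N)$ with $\Crm(\cl\Omega\times\cl{\Bbb^N})$ via $S$, density of tensor products by Stone--Weierstrass and separability, the gradient estimate showing that the blow-up factor $(1+|A|)$ is exactly offset by the $(1+|A|)^{-1}$-decay of $D\bigl(A/(1+|A|)\bigr)$ so that $S^{-1}\chi$ is globally Lipschitz whenever $\chi$ is Lipschitz on $\cl{\Bbb^N}$, and promotion of density to separation via the continuity bound $|\ddpr{f,\nu}|\le \|f\|_{\E}\bigl(\Lcal^d(\Omega)+\int_\Omega\dpr{|\cdot|,\nu_x}\di x+\lambda_\nu(\cl\Omega)\bigr)$ followed by Riesz representation (first with $\Crm_c(\R^N)$-test functions to fix $(\nu_x)$, then with $\chi\equiv 1$ to fix $\lambda_\nu$, then with varying recession $\chi\in\Crm(\Sbb^{N-1})$ to fix $(\nu_x^\infty)$, taking countable separating families to manage the $x$-a.e.\ exceptional sets).
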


Since $\Y(\Omega;\R^N)$ is contained in the dual space of $\E(\Omega;\R^N)$ via the duality pairing $\ddpr{ \frarg, \frarg }$, we say that a sequence of Young measures $(\nu_j) \subset \Y(\Omega;\R^N)$ {\it converges weakly*} to $\nu \in \Y(\Omega;\R^N)$, in symbols $\nu_j \toweakstarY \nu$, if
\[
\ddprb{ f,\nu_j } \to \ddprb{ f, \nu } \qquad \text{for all $f \in \E(\Omega;\R^N)$}.
\]

Fundamental for all Young measure theory is the following compactness result, see~\cite[Section 3.1]{KristensenRindler10} for a proof. 
\begin{lemma}[compactness] \label{lem:YM_compact}
Let $(\nu_j) \subset \Y(\Omega;\R^N)$ be a sequence of Young measures satisfying 
\begin{itemize}
\item[(i)] the functions $x \mapsto \dprb{ |\cdot|,\nu_j}$ are uniformly bounded in $\Lrm^1(\Omega)$,\vskip5pt
\item[(ii)] $\sup_j \lambda_{\nu_j}(\cl{\Omega}) < \infty$.
\end{itemize}
Then, there exists a subsequence (not relabeled) and $\nu \in \Y(\Omega;\R^N)$ such that
$\nu_j \toweakstarY \nu$ in $\Y(\Omega;\R^N)$.
\end{lemma}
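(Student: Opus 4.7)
The plan is to encode each generalized Young measure $\nu_j$ as a positive Radon measure $\gamma_j$ on the compact metric space $K := \cl\Omega \times \cl{\Bbb^N}$, use Banach--Alaoglu to extract a weak* limit $\gamma$, and then disintegrate $\gamma$ into the triple $(\nu_x, \lambda_\nu, \nu^\infty_x)$ representing the limit Young measure $\nu$.

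Concretely, I would set
$$\gamma_j := \phi_\#\bigl((1+|A|)\,\di\nu_j^x(A) \otimes \di x\bigr) + \di\nu_j^{\infty,x}(\xi) \otimes \di\lambda_{\nu_j}(x),$$
where $\phi \colon \Omega \times \R^N \to \Omega \times \Bbb^N$, $(x,A) \mapsto (x, A/(1+|A|))$, and the second summand is supported on $\cl\Omega \times \Sbb^{N-1}$. A direct change of variables yields
$$\int_K Sf \dd\gamma_j = \ddprb{f, \nu_j} \qquad \text{for every } f \in \E(\Omega;\R^N),$$
and the hypotheses (i), (ii) give the uniform bound $\sup_j \gamma_j(K) < \infty$. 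Since $K$ is a compact metric space, $\Crm(K)$ is separable and Banach--Alaoglu yields a subsequence (not relabeled) and $\gamma \in \Mcal^+(K)$ with $\gamma_j \toweakstar \gamma$.

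To recover $\nu$ from $\gamma$, I would split $\gamma = \gamma^\reg + \gamma^\sing$, where $\gamma^\reg := \gamma \restrict (\cl\Omega \times \Bbb^N)$ and $\gamma^\sing := \gamma \restrict (\cl\Omega \times \Sbb^{N-1})$, and define $\tilde\gamma := (\phi^{-1})_\#\gamma^\reg$ on $\Omega \times \R^N$ (checking beforehand that $\gamma^\reg$ does not charge $\partial\Omega \times \Bbb^N$, which follows from a density argument for $\Crm_c(\Omega)$-based test integrands). Testing with $f(x,A) = \varphi(x)$, $\varphi \in \Crm_c(\Omega)$, for which $Sf(x,\hat A) = (1-|\hat A|)\varphi(x)$ vanishes on $\Sbb^{N-1}$, and pulling back via $\phi$ using $1-|\hat A| = (1+|A|)^{-1}$, yields
$$\int_{\Omega \times \R^N} (1+|A|)^{-1}\varphi(x) \dd\tilde\gamma = \int_\Omega \varphi \dd x,$$
so $(1+|A|)^{-1}\tilde\gamma$ has $\Omega$-marginal $\Lcal^d \restrict \Omega$. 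Disintegration gives a weakly* $\Lcal^d$-measurable family of probability measures $(\nu_x)_{x \in \Omega} \subset \Mcal^1(\R^N)$ with $\tilde\gamma = (1+|A|)\,\Lcal^d \otimes \nu_x$. Set $\lambda_\nu := (\pi_1)_\#\gamma^\sing \in \Mcal^+(\cl\Omega)$ and disintegrate $\gamma^\sing = \lambda_\nu \otimes \nu^\infty_x$ to obtain a weakly* $\lambda_\nu$-measurable family $(\nu^\infty_x)_{x \in \cl\Omega} \subset \Mcal^1(\Sbb^{N-1})$. For every $f \in \E(\Omega;\R^N)$ one then computes
$$\int_K Sf \dd\gamma = \int_\Omega \dprb{f(x,\frarg),\nu_x} \dd x + \int_{\cl\Omega} \dprb{f^\infty(x,\frarg),\nu^\infty_x} \dd\lambda_\nu = \ddprb{f,\nu},$$
so $\nu_j \toweakstarY \nu$, as claimed.

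The main obstacle I expect is verifying that the disintegrations actually yield \emph{probability} measures and that condition (vi) of the definition of $\Y(\Omega;\R^N)$ is preserved. The former relies on the explicit $(1+|A|)$-weighting built into $\gamma_j$, which forces the $\Omega$-marginal of $(1+|A|)^{-1}\tilde\gamma$ to be exactly $\Lcal^d$, together with the standard disintegration theorem on the singular side. Condition (vi) follows directly from $\int_\Omega(1+\dprb{|\frarg|,\nu_x})\dd x = \tilde\gamma(\Omega \times \R^N) \leq \gamma(K) < \infty$. Uniqueness of the limit $\nu$ among all candidates compatible with the computed pairings is then ensured by Lemma~\ref{lem:separation}.
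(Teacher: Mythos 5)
Your proof is correct and follows essentially the same route as the reference the paper cites for this lemma (Kristensen--Rindler, Section~3.1): embed each $\nu_j$ into $\Mcal^+(\cl\Omega\times\cl{\Bbb^N})$ via the $S$-transform so that $\int Sf\,\di\gamma_j = \ddprb{f,\nu_j}$, extract a weak* limit by Banach--Alaoglu from the uniform mass bound supplied by hypotheses (i)--(ii), and recover the limit Young measure by splitting into the parts on $\cl\Omega\times\Bbb^N$ and $\cl\Omega\times\Sbb^{N-1}$ and disintegrating, with the standing assumption $\Lcal^d(\partial\Omega)=0$ guaranteeing that the regular marginal is exactly $\Lcal^d\restrict\Omega$ and hence that the $\nu_x$ are probability measures.
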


The Radon--Nikod\'{y}m decomposition~\eqref{eq:RN} induces a natural embedding of $\Mcal({\Omega};\R^N)$ into $\Y(\Omega;\R^N)$ via the identification $\mu \mapsto \delta[\mu]$, where
\[
(\delta[\mu])_x := \delta_{\frac{\di \mu}{\di \Lcal^d}(x)}, \qquad \lambda_{\delta[\mu]} := |\mu^s|, \qquad (\delta[\mu])^\infty_x := \delta_{\frac{\di \mu^s}{\di |\mu^s|}(x)}.
\]
In this sense, we say that the sequence of measures $(\mu_j)$ {\it generates} the Young measure $\nu$ if $\delta[{\mu_h}] \toweakstarY \nu$ in $\Y(\Omega;\R^N)$; we write 
\[
  \mu_j \toY \nu.
\]

The \emph{barycenter} of a Young measure $\nu \in \Y({\Omega};\R^N)$ is defined as the measure
\[
[\nu] \coloneqq \dprb{ \id,\nu_x} \, \Lcal^d \restrict \Omega + \dprb{ \id, \nu^\infty_x } \, \lambda_\nu \in \Mcal(\cl{\Omega};\R^N). 
\]
Using the notation above it is clear that for $(\mu_j) \subset \Mcal(\Omega;\R^N)$ it holds that $\mu_j \toweakstar [\nu]$, as measures on $\cl{\Omega}$, if $\mu_j \toY \nu$.

\begin{remark}\label{rem:strict}
For a sequence $(\mu_j) \subset \Mcal(\Omega;\R^N)$ that area-strictly converges to some limit $\mu \in \Mcal(\Omega;\R^N)$, it is relatively easy to characterize the (unique) Young measure it generates. Indeed, an immediate consequence of the Separation Lemma~\ref{lem:separation} and Theorem~\ref{thm:Reshet} is that 
\[
\mu_j \to \mu \; \text{area-strictly in $\Omega$} \qquad \Longleftrightarrow \qquad \mu_j \toY
\delta[\mu] \in \Y(\Omega;\R^N).
\]
\end{remark}

Young measures generated by means of periodic homogenization can be easily computed, see Lemma A.1 in~\cite{BallMurat84}. 

\begin{lemma}[oscillation measures]\label{lem:homogenization}
Let $1 \le p < \infty$ and let $w \in \Lrm_\loc^p(\R^d;\R^N)$ be a $Q$-periodic function and let $m \in \N$. Define the $(Q/m)$-periodic functions $w_m(x) \coloneqq w(mx)$.
Then,  
\[
w_m  \toweak  \cl w(x) \coloneqq \int_{Q}  w(y) \dd y 
\]
in $\Lrm^p_\loc(\R^d;\R^N)$.

In particular, the sequence $(w_m) \subset \Lrm^1_\loc(\R^d;\R^N)$ generates the homogeneous (local) Young measure $\nu = (\overline{\delta_w},0,\frarg) \in \Y_\loc(\R^d;\R^N)$ (since $\lambda_\nu$ is the zero measure, the component $\nu_x^\infty$ can be occupied by any parameterized family of probability measures in $\M^1(\Sbb^{N-1})$), where
\[
\dpr{h,\overline{\delta_w}} \coloneqq \int_Q h(w(y)) \dd y \quad \text{for all $h \in \Crm(\R^d)$ with linear growth at infinity}.
\] 
\end{lemma}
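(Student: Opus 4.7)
My plan is to establish the weak convergence in $\Lrm^p_\loc$ first and then deduce the Young measure description by applying the weak convergence to composed integrands of the form $h\circ w$; the whole argument proceeds through the classical Riemann-sum / density approach for periodic oscillations, together with a uniform $\Lrm^1_\loc$ bound that rules out any singular part in the limit Young measure.

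For the weak convergence statement, I would first observe the uniform bound: for any compact $K\subset\R^d$, the change of variables $y=mx$ combined with $Q$-periodicity gives
\[
\int_K|w_m(x)|^p\dd x \;=\; m^{-d}\int_{mK}|w(y)|^p\dd y \;\le\; C_K\int_Q|w(y)|^p\dd y,
\]
where $C_K$ is a constant depending only on $K$ and counts (up to boundary corrections) the number of translated unit cubes needed to cover $mK$. Hence $(w_m)$ is bounded in $\Lrm^p_\loc(\R^d;\R^N)$. Next I would test against indicator functions $\chi_R$ of cubes $R$ with vertices in $(1/m)\Z^d$: by periodicity,
\[
\int_R w_m(x)\dd x \;=\; m^{-d}\int_{mR}w(y)\dd y \;=\; |R|\int_Q w(y)\dd y \;=\; \int_R \cl w\dd x.
\]
A standard boundary-error estimate handles general cubes; linearity extends the identity to step functions; finally, since step functions are dense in $\Lrm^{p'}(K)$ for every compact $K$, the uniform $\Lrm^p_\loc$ bound lets me pass to all test functions in $\Lrm^{p'}$ with compact support, proving $w_m\toweak \cl w$ in $\Lrm^p_\loc(\R^d;\R^N)$.

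For the Young measure assertion, let $f(x,A)=\phi(x)h(A)$ with $\phi\in\Crm_c(\R^d)$ and $h\in\Crm(\R^N)$ having at most linear growth. The composition $h\circ w$ is $Q$-periodic and satisfies $|h\circ w|\le C(1+|w|)\in\Lrm^p_\loc$, so the first part applies to give
\[
h(w_m(\frarg)) \toweak \int_Q h(w(y))\dd y \;=\; \dpr{h,\overline{\delta_w}} \quad\text{in }\Lrm^p_\loc(\R^d),
\]
and therefore $\int\phi(x)h(w_m(x))\dd x\to \int\phi(x)\dpr{h,\overline{\delta_w}}\dd x$. This is exactly the duality pairing against the candidate Young measure $\nu=(\overline{\delta_w},0,\frarg)$ with vanishing concentration measure. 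By Lemma~\ref{lem:separation}, integrands of tensor-product form are enough to identify the limit Young measure, so it remains only to verify that no singular part forms, i.e.\ that $\lambda_{\nu}=0$. This follows because the uniform $\Lrm^1_\loc$ bound on $(w_m)$ from Step~1, applied via Lemma~\ref{lem:YM_compact} on exhausting compact sets, prevents any mass from escaping to infinity in the amplitude variable; equivalently, testing with $f(x,A)=\phi(x)|A|$ yields a bounded sequence $\int\phi|w_m|\dd x\to\int\phi \int_Q|w|\dd y\,\dd x$ with no extra contribution, so the concentration measure must vanish.

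The argument is essentially soft, and I do not anticipate a serious obstacle; the only point requiring mild care is the transition from weak convergence in $\Lrm^p_\loc$ to the Young measure identification. The potential subtlety is that $\Y_\loc$ allows concentrations, and one must make sure none are created; however, the $Q$-periodicity of $|w|$ gives both $\Lrm^1_\loc$-boundedness and convergence of masses on compact sets, which is exactly what rules out $\lambda_\nu\neq 0$. Everything else is a direct application of the separation/compactness machinery of Section~\ref{sec: Young}.
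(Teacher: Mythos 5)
Your proof is correct and essentially standard; the paper itself gives no argument and merely cites Ball--Murat~\cite{BallMurat84}, so you have filled in a proof that the paper omits. Two small remarks. First, your appeal to ``step functions are dense in $\Lrm^{p'}(K)$'' breaks down at the endpoint $p=1$ (where $p'=\infty$ and grid step functions are not $\Lrm^\infty$-dense); for $p=1$ one should instead note that the $Q$-periodicity gives uniform integrability of $(w_m)$ on compact sets, or simply observe that the Young-measure application only ever tests against $\phi\in\Crm_c$, for which uniform approximation by grid step functions suffices. Second, once you invoke Lemma~\ref{lem:separation} with the candidate Young measure $\tilde\nu=(\overline{\delta_w},0,\frarg)$, the equality $\ddprb{f_m,\nu}=\ddprb{f_m,\tilde\nu}$ for all separating integrands already forces $\lambda_\nu=\lambda_{\tilde\nu}=0$; your additional paragraph arguing separately that ``no concentration forms'' is redundant rather than wrong, and the heuristic phrasing about ``mass escaping to infinity'' is weaker than what the separation lemma already delivers. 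Apart from these cosmetic points, the argument is complete.
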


In some cases it will be necessary to determine the smallest linear space containing the support of a Young measure. With this aim in mind, we state the following version of Theorem 2.5 in~\cite{AlibertBouchitte97}:
\begin{lemma}\label{thm:alibert version} Let $(u_j)$ be a sequence in $\Lrm^1(\Omega;\R^N)$ generating a Young measure $\nu \in \Y({\Omega};\R^N)$
	and let $V$ be a subspace of $\R^N$ such that $u_j(x) \in V$ for $\Lcal^d$-a.e. $x \in \Omega$. Then,
	\begin{itemize}
		\item[(i)] $\supp \nu_x \subset V$ for $\Lcal^d$-a.e.\ $x \in \Omega$,\vskip5pt
		\item[(ii)] $\supp \nu^\infty_x \subset V \cap \Sbb^{N-1}$ for $\lambda_\nu$-a.e.\ $x \in \Omega$.	
	\end{itemize}
\end{lemma}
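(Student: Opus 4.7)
The plan is to exploit the fact that the distance to a linear subspace is a non-negative, positively $1$-homogeneous, Lipschitz function that vanishes exactly on the subspace, and then to pair the generating sequence against it via the Young-measure duality.

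Concretely, let $f(A) := \dist(A,V)$. Because $V$ is a linear subspace, $f$ is continuous, $1$-Lipschitz, positively $1$-homogeneous, non-negative, and satisfies $f(A)=0$ iff $A\in V$. In particular $f^\infty(A)=f(A)$, and a direct computation of the transform $Sf$ shows that for every non-negative $\varphi\in\Crm(\cl\Omega)$ the integrand $g(x,A):=\varphi(x)f(A)$ belongs to $\E(\Omega;\R^N)$ with $g^\infty(x,A)=\varphi(x)f(A)$.

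Now I apply the generating convergence $u_j\toY\nu$ to this integrand. Since $u_j(x)\in V$ for $\Lcal^d$-a.e.\ $x$, the left-hand side
\[
\ddprb{g,\delta[u_j\Lcal^d]}=\int_\Omega \varphi(x)\dist(u_j(x),V)\dd x
\]
equals zero for every $j$. Passing to the limit gives
\[
\int_\Omega \varphi(x)\dprb{\dist(\frarg,V),\nu_x}\dd x
+\int_{\cl\Omega}\varphi(x)\dprb{\dist(\frarg,V),\nu_x^\infty}\dd\lambda_\nu(x)=0.
\]
Both summands are non-negative for any choice of $\varphi\ge 0$, so each of them must vanish. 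Letting $\varphi$ range over a countable dense family of non-negative functions in $\Crm_c(\Omega)$, I conclude that $\dpr{\dist(\frarg,V),\nu_x}=0$ for $\Lcal^d$-a.e.\ $x\in\Omega$ and $\dpr{\dist(\frarg,V),\nu_x^\infty}=0$ for $\lambda_\nu$-a.e.\ $x\in\Omega$.

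Finally, since $\nu_x$ and $\nu_x^\infty$ are probability measures and the integrand $\dist(\frarg,V)$ is non-negative and vanishes only on $V$, the first identity forces $\supp\nu_x\subset V$ and the second forces $\supp\nu_x^\infty\subset V\cap\Sbb^{N-1}$ for the respective almost every $x$, proving (i) and (ii). There is no real obstacle: the only delicate point is checking that the chosen test integrand genuinely sits inside the Banach space $\E(\Omega;\R^N)$ with recession function equal to itself, and this is immediate from the $1$-homogeneity of $\dist(\frarg,V)$.
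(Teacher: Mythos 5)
Your proof is correct. The paper does not actually supply a proof of this lemma --- it is stated as a version of Theorem~2.5 in Alibert--Bouchitt\'e and the reader is referred there --- so your argument is a self-contained derivation. The idea of testing the generating convergence $u_j \toY \nu$ against the integrand $g(x,A) = \varphi(x)\dist(A,V)$ with $\varphi \ge 0$ is exactly the right move: since $V$ is a linear subspace, $\dist(\frarg,V)$ is continuous, non-negative and positively $1$-homogeneous, so $Sg(x,\hat A) = \varphi(x)\dist(\hat A,V)$ extends continuously to $\overline{\Omega \times \Bbb^N}$, placing $g$ in $\E(\Omega;\R^N)$ with $g^\infty = g$. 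The hypothesis $u_j(x)\in V$ a.e.\ makes $\ddprb{g,\delta[u_j\Lcal^d]}=0$ for every $j$, and after passing to the limit the non-negativity of both terms forces them to vanish separately; running $\varphi$ over a countable dense family of non-negative test functions localizes the identity in $x$, and the fact that $\dist(\frarg,V)$ vanishes precisely on $V$ then yields the support inclusions for both $\nu_x$ and $\nu_x^\infty$. One small bookkeeping remark: the conclusion in (ii) is stated for $\lambda_\nu$-a.e.\ $x\in\Omega$, so working with $\varphi\in\Crm_c(\Omega)$ suffices; if you wanted the statement $\lambda_\nu$-a.e.\ on $\cl\Omega$ you would take $\varphi$ dense in $\Crm(\cl\Omega)_{\ge 0}$, which is equally admissible here. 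In short, your argument is more elementary than an appeal to the general structure theorem of Alibert--Bouchitt\'e and buys a transparent, fully explicit proof at essentially no cost.
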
 

Finally, we have the following approximation lemma, see~\cite[Lemma~2.3]{AlibertBouchitte97} for a proof.

\begin{lemma} \label{lem:E_approx}
Let $f \colon \Omega \times \R^{N} \to \R$ be an upper semicontinuous integrand with linear growth at infinity. Then,  there exists a decreasing sequence $(f_m) \subset \Ebf(\Omega;\R^{N})$ such that 
\[
  \inf_{m \in \N} f_m = \lim_{m \to \infty} f_m = f, \qquad
    \inf_{m \in \N} f_m^\infty = \lim_{m \to \infty} f_m^\infty = f^\# \qquad\textnormal{(pointwise).}
\]
Furthermore, the linear growth constants of the $f_m$'s can be chosen to be bounded by the linear growth constant of $f$.
\end{lemma}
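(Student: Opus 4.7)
The strategy is to reduce to approximating a bounded upper semicontinuous function on a compact metric space by a decreasing sequence of Lipschitz functions, which is classical (sup-convolution), and then to transfer back through the transformation $S$ that defines $\Ebf(\Omega;\R^N)$. This is a standard compactification-plus-regularization scheme adapted to the $S$-picture used throughout the paper.

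First I would consider $Sf$ on $\Omega\times\Bbb^N$ and define $\hat f$ to be its upper semicontinuous envelope on the compact set $K := \cl\Omega \times \cl{\Bbb^N}$. Since $f$ has linear growth constant $M$, the bound $|Sf|\le M$ is immediate, so $\hat f$ is USC and $|\hat f|\le M$ on $K$; moreover, $\hat f = Sf$ at points of $\Omega\times\Bbb^N$, since $Sf$ is already USC there (product of the locally bounded USC function $f$ with a positive continuous factor, pre-composed with a continuous map). Next I would apply the classical sup-convolution
\[
\hat f_m(x,\hat A) := \sup_{(y,\hat B)\in K}\Bigl\{\hat f(y,\hat B) - m\,\|(x,\hat A)-(y,\hat B)\|\Bigr\},
\]
which is $m$-Lipschitz, decreasing in $m$, satisfies $|\hat f_m|\le M$, and descends pointwise to $\hat f$ on $K$ (almost-maximizers $y_m$ must converge to $x$ by the boundedness bound $d(x,y_m)\le (2M+1)/m$, and then upper semicontinuity concludes).

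To transform back, I set
\[
f_m(x,A) := (1+|A|)\,\hat f_m\!\left(x,\tfrac{A}{1+|A|}\right).
\]
A short algebraic check shows $Sf_m = \hat f_m|_{\Omega\times\Bbb^N}$, so $Sf_m$ extends continuously to $\cl{\Omega\times\Bbb^N}$ and $f_m\in\Ebf(\Omega;\R^N)$ with linear growth constant at most $M$; monotonicity $f_{m+1}\le f_m$ and $f_m\searrow f$ on $\Omega\times\R^N$ follow from the corresponding statements for $\hat f_m\searrow\hat f = Sf$ at interior points. The Lipschitz regularity of $\hat f_m$ on $K$ upgrades the upper recession function to the strong one, yielding
\[
f_m^\infty(x,A) = \lim_{(x',A',t)\to(x,A,\infty)}\frac{f_m(x',tA')}{t} = |A|\,\hat f_m\!\left(x,\tfrac{A}{|A|}\right)\qquad (A\neq 0),
\]
with the trivial value $0$ at $A=0$.

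The real content of the proof, and the step I expect to require the most care, is the identification of the boundary trace of $\hat f$ on $\cl\Omega\times\Sbb^{N-1}$ with $f^\#$: this is what guarantees that the previous display decreases to $|A|\,f^\#(x,A/|A|) = f^\#(x,A)$, where positive $1$-homogeneity is used in the last equality. I would prove the identification by parametrizing an arbitrary sequence $(x_n,\hat A_n)\to(x_0,A_0)$ with $|\hat A_n|<1$ and $|A_0|=1$ as $\hat A_n = t_n B_n/(1+t_n|B_n|)$ for some $t_n\to\infty$ and $B_n\to A_0$, so that
\[
Sf(x_n,\hat A_n) = \frac{f(x_n,t_nB_n)}{1+t_n|B_n|}
\]
is asymptotically equal to $f(x_n,t_nB_n)/t_n$ as $|B_n|\to|A_0|=1$. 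Taking limsups over this bijective correspondence and comparing with the definition of $f^\#$ gives $\hat f(x_0,A_0) = f^\#(x_0,A_0)$, completing the proof.
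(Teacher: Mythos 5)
The paper does not give its own proof of this lemma; it simply cites [AB97, Lemma~2.3]. So there is no internal proof to compare against, only the question of whether your argument is sound. Having checked it carefully, I believe it is: the compactification via the $S$-transform, the Lipschitz sup-convolution $\hat f_m$ on $K=\cl\Omega\times\cl{\Bbb^N}$, the back-transformation $f_m(x,A):=(1+|A|)\hat f_m(x,A/(1+|A|))$, and the boundary-trace identification $\hat f(x_0,A_0)=f^\#(x_0,A_0)$ for $|A_0|=1$ are each correct, and together they give exactly the statement. A few points deserve to be made fully explicit in a final write-up: (a) the identity $\hat f=Sf$ on $\Omega\times\Bbb^N$ uses that $f$ is locally bounded (which follows from linear growth) so that the product of the positive continuous weight with the USC factor is USC; (b) in the sup-convolution convergence, the bound $\|z-w_m\|\le(2M+\eps_m)/m$ relies on $|\hat f|\le M$ on $K$, which holds because linear growth is a two-sided bound $|f|\le M(1+|A|)$; (c) for the boundary trace you should verify \emph{both} inequalities—your parametrization $\hat A_n=t_nB_n/(1+t_n|B_n|)$ with $t_n=|\hat A_n|/(1-|\hat A_n|)$, $B_n=\hat A_n/|\hat A_n|$ gives $\hat f(x_0,A_0)\le f^\#(x_0,A_0)$, and the reverse direction comes from taking an arbitrary sequence $(x_n,A_n,t_n)$ competing in the definition of $f^\#$ and setting $\hat A_n := t_nA_n/(1+t_n|A_n|)$, using $|A_n|\to 1$ to replace $t_n|A_n|$ by $t_n$ in the denominator. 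None of these are gaps, just steps to spell out. Your construction also automatically yields the uniform linear-growth bound ($|f_m|\le M(1+|A|)$ since $|\hat f_m|\le M$), matching the last claim of the lemma.
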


By approximation, we thus get:

\begin{corollary}\label{cor:representation} Let $f \colon \Omega \times \R^N \to \R$ be an upper semicontinuous Borel integrand. Then the functional
\[
\nu \mapsto \int_\Omega \dpr{f(x,\frarg),\nu_x} \dd x + \int_{\cl{\Omega}} \dpr{f^\#(x,\frarg),\nu_x^\infty} \dd \lambda_\nu(x) 
\]
is sequentially weakly* upper semicontinuous on $\Y(\Omega;\R^N)$.

 Similarly, if $f \colon \Omega \times \R^N \to \R$ is a lower semicontinuous Borel integrand, then the functional
\[
\nu \mapsto \int_\Omega \dpr{f(x,\frarg),\nu_x} \dd x + \int_{\cl{\Omega}} \dpr{f_\#(x,\frarg),\nu_x^\infty} \dd \lambda_\nu(x) 
\]
is sequentially weakly* lower semicontinuous on $\Y(\Omega;\R^N)$.
\end{corollary}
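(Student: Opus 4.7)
The plan is to reduce the statement to the weak* continuity that is built into the definition of Young measure convergence, via the monotone approximation of Lemma~\ref{lem:E_approx}. First, I would invoke Lemma~\ref{lem:E_approx} to pick a decreasing sequence $(f_m)\subset \e(\Omega;\R^N)$ with uniformly bounded linear growth constants such that $f_m\searrow f$ and $f_m^\infty\searrow f^\#$ pointwise on $\Omega\times\R^N$ and $\cl\Omega\times\R^N$ respectively (here linear growth of $f$, which is the underlying hypothesis in this framework, is what makes the lemma applicable and ensures the functional is well defined).

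Second, by the very definition of weak* convergence in $\Y(\Omega;\R^N)$, the functional
\[
\nu\mapsto \ddprb{f_m,\nu} = \int_\Omega \dprb{f_m(x,\frarg),\nu_x}\dd x + \int_{\cl\Omega}\dprb{f_m^\infty(x,\frarg),\nu_x^\infty}\dd\lambda_\nu(x)
\]
is weakly* continuous on $\Y(\Omega;\R^N)$ for every $m\in\N$, since each $f_m$ belongs to $\e(\Omega;\R^N)$.

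Third, for a fixed $\nu\in\Y(\Omega;\R^N)$, I would apply monotone convergence to pass to the limit $m\to\infty$ in $\ddpr{f_m,\nu}$. The decreasing sequences $f_m(x,\frarg)$ are dominated from above by $f_1(x,\frarg)$ and from below by the affine function $-M(1+|\frarg|)$, which is $\nu_x$-integrable because $x\mapsto\dpr{|\frarg|,\nu_x}\in\Lrm^1(\Omega)$ by definition of a Young measure; the same argument at infinity handles $\dpr{f_m^\infty(x,\frarg),\nu_x^\infty}$, using that $\nu_x^\infty$ is a probability measure on $\Sbb^{N-1}$ and $\lambda_\nu$ is finite. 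Hence $\ddpr{f_m,\nu}\searrow F(\nu)$, where $F$ denotes the functional in the statement, and we have obtained the pointwise representation
\[
F(\nu) = \inf_{m\in\N}\ddprb{f_m,\nu}.
\]
As an infimum of weakly* continuous functionals, $F$ is sequentially weakly* upper semicontinuous on $\Y(\Omega;\R^N)$, proving the first assertion.

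For the lower semicontinuous case, I would simply apply the result just obtained to $-f$, which is upper semicontinuous and still has linear growth. The identities $(-f)^\#(x,A)=-f_\#(x,A)$, immediate from the definitions of upper/lower recession functions, then turn the upper semicontinuity of $\nu\mapsto \int\dpr{-f,\nu_x}\dd x+\int\dpr{(-f)^\#,\nu_x^\infty}\dd\lambda_\nu$ into the desired lower semicontinuity of $\nu\mapsto\int\dpr{f,\nu_x}\dd x+\int\dpr{f_\#,\nu_x^\infty}\dd\lambda_\nu$. The only mildly delicate point in the argument is the monotone convergence step; once the domination by an $\nu_x$-integrable affine function is in place, everything else is essentially bookkeeping.
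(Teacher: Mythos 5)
Your proof is correct and matches the approach the paper intends: the paper states the corollary immediately after Lemma~\ref{lem:E_approx} with the one-line justification ``by approximation,'' and the argument you lay out — decreasing approximation by $\e(\Omega;\R^N)$-integrands, weak* continuity of $\nu\mapsto\ddpr{f_m,\nu}$ built into the definition, monotone convergence to identify the functional as an infimum of continuous ones, and symmetry $(-f)^\# = -f_\#$ for the lower semicontinuous case — is exactly what fills in that justification.
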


\subsection{Tangent measures}\label{sec: tangent}

In this section we recall the notion of tangent measures, as introduced by Preiss~\cite{Preiss87} (with the exception that we always include the zero measure as a tangent measure).

Let $\mu \in \Mcal(\Omega;\R^N)$ and consider the map $T^{(x_0,r)}(x) := (x - x_0)/r$, 
which blows up $B_r(x_0)$, the open ball around $x_0 \in \Omega$ with radius $r > 0$, into the open unit ball $B_1$. The \emph{push-forward} of $\mu$ under $T^{(x_0,r)}$ is given by the measure
\[
T^{(x_0,r)}_\# \mu(B) := \mu(x_0+rB ), \qquad B \subset r^{-1}(\Omega-x_0) \;\text{ a Borel set.}
\]
We say that $\nu$ is a \emph{tangent measure} to $\mu$ at a point $x_0 \in \R^d$ if there exist sequences $r_m > 0$, $c_m > 0$ with $r_m \todown 0$ such that 
\[
c_m T^{(x_0,r_m)}_\# \mu  \toweakstar \nu\quad \text{in $\Mcal_\loc(\R^d;\R^N)$}.
\]
The set of all such tangent measures is denoted by $\Tan(\mu,x_0)$ and the sequence $c_m T^{(x_0,r_m)}_\# \mu$ is called a {\it blow-up sequence}. Using the canonical zero extension that maps the space $\Mcal(\Omega;\R^N)$ into the space $\Mcal(\R^d;\R^N)$ we may use most of the results contained in the general theory for tangent measures when dealing with tangent measures defined on smaller domains.

Since we will frequently restrict tangent measures to the $d$-dimensional unit cube $Q := (-1/2,1/2)^d$, we set
\[
\Tan_Q(\mu,x_0) := \setb{\sigma \restrict \cl Q}{\sigma \in \Tan(\mu,x_0)}.
\]

One can show (see Remark 14.4 in~\cite{Mattila95book}) that for any non-zero $\sigma \in \Tan(\mu,x_0)$ it is always possible to choose the scaling constants $c_m > 0$ in the blow-up sequence to be
\[
c_m := c \mu(x_0 + r_m \cl{U})^{-1} 
\]
for any open and bounded set $U \subset \R^d$ containing the origin and with the property that $\sigma(U) > 0$, for some positive constant $c = c(U)$ (this may involve passing to a subsequence).

A special property of tangent measures is that at $|\mu|$-almost every $x_0 \in \R^d$ it holds that
\begin{equation}\label{eq:both}
\sigma = \text{w*-}\lim_{m \to \infty}  c_m T^{(x_0,r_m)}_\# \mu \quad \Longleftrightarrow \quad |\sigma| = \text{w*-}\lim_{m \to \infty}  c_m T^{(x_0,r_m)}_\# |\mu|, 
\end{equation}
where the weak* limits are to be understood in the spaces $\Mcal_\loc(\R^d;\R^N)$ and $\M_\loc^+(\R^d)$, respectively. A proof of this fact can be found in Theorem~2.44 of~\cite{AmbrosioFuscoPallara00book}.
In particular, this implies 
\[
\Tan(\mu,x_0) = \frac{\di \mu}{\di |\mu|}(x_0) \cdot \Tan(|\mu|,x_0)\quad \text{for $|\mu|$-almost every $x_0 \in \R^d$}.
\]

If $\mu, \lambda \in \M^+_\loc(\R^d)$ are two Radon measures with the property that $\mu \ll \lambda$, i.e., that $\mu$ is absolutely continuous with respect to $\lambda$, then (see Lemma 14.6 of~\cite{Mattila95book})
\begin{equation}\label{eq: tangent absolute}
\Tan(\mu,x_0) = \Tan(\lambda,x_0) \quad \text{for $\mu$-almost every $x_0 \in \R^d$},
\end{equation}
and in particular if $f \in \Lrm^1_\loc(\R^d,\lambda;\R^N)$, i.e., $f$ is is $\lambda$-integrable, 
\[
\Tan(f \lambda,x_0) = f(x_0)\cdot \Tan(\lambda,x_0) \quad \text{for $\lambda$-a.e. $x_0 \in \R^d$}.
\]
On the other hand, at every $x_0 \in \supp \mu$ such that 
\[
\lim_{r \todown 0} \frac{\mu(B_r(x_0) \setminus E)}{\mu(B_r(x_0))} = 0
\]
for some Borel set $E \subset \R^d$, it holds that 
\[
\Tan(\mu,x_0) = \Tan(\mu\restrict E,x_0).
\]
A simple consequence of~\eqref{eq: tangent absolute} is
\[\Tan(|\mu|,x_0) = \Tan\big(\Lcal^d, x_0\big) \qquad \text{for $\frac{\di |\mu|}{\di \Lcal^d} \, \Lcal^d$-a.e.\ $x_0 \in \R^d$.}
\]
This implies 
\begin{equation}\label{eq:ae tangent}
\Tan(\mu,x_0) = \setBB{\alpha \, \frac{\di \mu}{\di \Lcal^d}(x_0) \, \Lcal^d}{\alpha \in [0,\infty) } \quad \text{for $\Lcal^d$-a.e.\ $x_0 \in \R^d$}.
\end{equation}
We shall refer to such points as {\it regular points} of $\mu$. Furthermore, for every regular point $x_0$ there exists a sequence $r_m \todown 0$ and a positive constant $c$ such that             
\[
c r^{-d}_m (T^{(x_0,r_m)}_\# \mu) \toweakstar \frac{\di \mu}{\di \Lcal^d}(x_0) \, \Lcal^d \quad \text{in $\M_\loc(\R^d;\R^N)$}.
\]

\subsection{Rigidity results}\label{sec:rigid}
As discussed in the introduction, for a linear operator $\A := \sum_{|\alpha| \leq k} A_\alpha \partial^\alpha$,  the wave cone 
\[
\Lambda_\A := \bigcup_{|\xi|= 1} \ker \Abb^k(\xi)  \; \subset \R^N
\]
contains those amplitudes along which is possible to have ``one-directional'' oscillations or concentrations, or equivalently, it contains the amplitudes along which the system loses its ellipticity.

%
%
%


The main result of~\cite{DePhilippisRindler16} asserts that the polar vector of the singular part of an \(\A\)-free measure \(\mu\)  necessarily has to lie in \(\Lambda_\A\):
 
\begin{theorem}\label{thm:guido filip}
	Let $\Omega \subset \R^d$ be an open set and let $\mu \in \Mcal(\Omega;\R^N)$ be an $\A$-free Radon measure on $\Omega$ with values in $\R^N$, i.e.,
	\[
	\A \mu = 0 \quad \text{in the sense of distributions}.
	\] Then,
	\[
	\frac{\di \mu}{\di |\mu|}(x) \in \Lambda_\A \qquad \text{for $|\mu^s|$-a.e.\ $x \in \Omega$}.
	\]
\end{theorem}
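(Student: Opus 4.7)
The plan is to argue by contradiction via a tangent-measure blow-up combined with a short Fourier-analytic rigidity step for $\A^k$-free measures of ``rank-one'' form $P_0 \lambda$.

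Assume for contradiction that $|\mu^s|(E) > 0$, where
\[
E \coloneqq \setBB{x \in \Omega}{P(x) \coloneqq \frac{\di \mu}{\di |\mu|}(x) \notin \Lambda_\A}.
\]
By Lusin's theorem and inner regularity of $|\mu^s|$, I select a compact $K \subset E$ with $|\mu^s|(K) > 0$ on which $P$ is continuous. At $|\mu^s|$-a.e.\ $x_0 \in K$ the following properties hold simultaneously: (a) $P(x_0) = P_0 \notin \Lambda_\A$ is well-defined; (b) $\Tan(\mu,x_0) = P_0 \cdot \Tan(|\mu|,x_0)$ by~\eqref{eq: tangent absolute}; (c) $\Tan(|\mu|,x_0) = \Tan(|\mu^s|,x_0)$ and every non-zero $\tau$ in this common set is singular with respect to $\Lcal^d$ — a standard GMT fact at $|\mu^s|$-density-one points of a set $F$ with $|\mu^s|(\R^d \setminus F) = 0$ and $\Lcal^d(F) = 0$, using that $\supp\tau$ sits inside the upper Kuratowski limit of the rescaled null sets $r_m^{-1}(F - x_0)$.

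Fix such an $x_0$ and extract any non-zero $\sigma \in \Tan(\mu,x_0)$, realized as the weak* limit of a blow-up sequence $\mu_m \coloneqq c_m T^{(x_0,r_m)}_\# \mu$ in $\Mcal_\loc(\R^d;\R^N)$; by (b), $\sigma = P_0 \lambda$ with $\lambda \in \Mcal_\loc^+(\R^d)$ non-zero. A direct rescaling computation shows that $\mu_m$ satisfies $\A_m \mu_m = 0$, where the rescaled operator $\A_m$ has principal part $\A^k$ but lower-order coefficients carrying the vanishing factors $r_m^{k - |\alpha|}$; passing to the distributional limit yields
\[
\A^k(P_0 \lambda) = 0 \qquad \text{in } \Dcal'(\R^d;\R^n).
\]

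I next claim $\lambda \ll \Lcal^d$, directly contradicting (c). Since tangent measures of finite Radon measures have at most polynomial growth, $\lambda \in \Scal'(\R^d)$, so in Fourier variables
\[
\Abb^k(\xi)\, P_0\, \widehat{\lambda}(\xi) = 0 \qquad \text{in } \Scal'(\R^d;\R^n).
\]
Componentwise, each polynomial symbol $(\Abb^k(\xi)P_0)_i$ annihilates the scalar distribution $\widehat{\lambda}$, and by the standard principle that a polynomial multiplier equal to zero pins the support of a distribution to its zero set,
\[
\supp \widehat{\lambda} \;\subset\; \setB{\xi \in \R^d}{\Abb^k(\xi) P_0 = 0}.
\]
The hypothesis $P_0 \notin \Lambda_\A$ together with the $k$-homogeneity of $\Abb^k$ identifies this common zero set with $\{0\}$. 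By Schwartz's theorem on distributions supported at a point, $\lambda$ is represented by a polynomial on $\R^d$; as a non-negative, locally finite Radon measure with polynomial density, $\lambda \ll \Lcal^d$. This contradicts (c) and finishes the proof.

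I expect the main technical hurdles to be twofold. First, property (c) — that at $|\mu^s|$-a.e.\ point every non-zero tangent of $|\mu|$ is $\Lcal^d$-singular — is classical but requires a careful blow-up/support argument combined with the standard identity $\Tan(|\mu|,x_0) = \Tan(|\mu^s|,x_0)$ coming from the infinite $\Lcal^d$-density of $|\mu^s|$ at $|\mu^s|$-a.e.\ point. Second, the distributional rigidity step needs temperedness of $\lambda$ (a routine but non-trivial consequence of the polynomial growth of tangent measures) and a clean invocation of the multiplier-annihilation principle in $\Scal'$. Notably, the constant-rank condition~\eqref{eq:constrank} enters only to ensure algebraic regularity of $\Abb^k$; the true driver of the rigidity is the pointwise non-vanishing $\Abb^k(\xi)P_0 \ne 0$ for $\xi \ne 0$, which is exactly equivalent to $P_0 \notin \Lambda_\A$.
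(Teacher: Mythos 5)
The critical flaw is your step~(c). You claim that at $|\mu^s|$-a.e.\ $x_0$, every non-zero $\tau \in \Tan(|\mu|,x_0) = \Tan(|\mu^s|,x_0)$ is $\Lcal^d$-singular, justifying this via the Kuratowski limit of the rescaled null sets $r_m^{-1}(F - x_0)$. This is false, and the justification does not hold up: weak* convergence only places $\supp\tau$ inside the Kuratowski lower limit of $\supp\bigl(c_m T^{(x_0,r_m)}_\#\mu\bigr)$, and the Kuratowski limit of rescaled $\Lcal^d$-null sets is generically \emph{not} a null set — rescalings of a fixed null set densify and can accumulate onto all of $\R^d$. The whole difficulty of Theorem~\ref{thm:guido filip} (which the present paper does not reprove but cites from \cite{DePhilippisRindler16}) is precisely that tangent measures of a singular measure need not be singular. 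Indeed, the mechanism in the actual proof is the \emph{reverse} of what you need: the ellipticity of $\xi \mapsto \Abb^k(\xi)P_0$ forced by $P_0 \notin \Lambda_\A$ implies that appropriately normalized blow-ups are pre-compact in $\Lrm^1_\loc$, so the tangent measure at such a point \emph{is} absolutely continuous — which is not in itself a contradiction.

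Your Fourier/Schwartz argument in steps~5--7 is in the right spirit (it is a qualitative version of the same ellipticity and does correctly force $\lambda$ to be a nonnegative polynomial, hence a.c., modulo the temperedness of the chosen tangent, which requires a careful choice of normalization $c_m$), but without step~(c) it produces no contradiction. The genuine argument in \cite{DePhilippisRindler16} is considerably finer: it combines an $\Lrm^1$-compactness estimate for the blow-up sequence, obtained by splitting off the ``elliptic'' part via Mihlin/Calder\'on--Zygmund multiplier bounds, with \emph{strict} convergence of the normalized blow-ups and the fact that at $|\mu^s|$-a.e.\ $x_0$ the singular density satisfies $|\mu^s|(B_r(x_0))/|\mu|(B_r(x_0)) \to 1$; the contradiction is between this persistence of singular mass and the $\Lrm^1$-compactness. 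Schwartz's theorem on point-supported distributions, applied only to the limit $\lambda$, is too weak a substitute because it carries no quantitative information about the approximating sequence.
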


\begin{remark} \label{rem:MuratCR}
The proof of this result does not require $\A$ to satisfy Murat's constant rank condition~\eqref{eq:constrank}. However, for the present work, this requirement cannot be dispensed with in the following  decomposition by Fonseca and M\"uller~\cite[Lemma 2.14]{FonsecaMuller99}, where it is needed for the Fourier projection arguments.
\end{remark}

\begin{lemma}[projection]\label{lem:fonseca constant} 
	Let $\A$ be a homogeneous differential operator satisfying the constant rank property~\eqref{eq:constrank}. Then, for every $1 < p < \infty $, there exists a linear projection operator
	\[
	  \Pcal : \Lrm^p(Q;\R^N) \to \Lrm^p(Q;\R^N)
    \]
    and a positive constant  $c_p > 0$ such that
	\[
	\A(\Pcal u) = 0, \qquad \int_Q \Pcal u \dd y = 0, \qquad
	\| u - \Pcal u \|_{\Lrm^p(Q)} \leq c_p\| \A u\|_{\Wrm^{-k,p}_\per(Q)},
	\]
	for every $u \in \Lrm^p(Q;\R^N)$ with $\int_Q u \dd y = 0$.
\end{lemma}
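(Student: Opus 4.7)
My plan is to construct $\Pcal$ explicitly as a Fourier multiplier on the torus, following the classical Fonseca--Müller approach. I would begin by identifying $\Lrm^p(Q;\R^N)$ with the space of $Q$-periodic $\Lrm^p$-functions on $\R^d$ and expanding each such $u$ in a Fourier series $u(x) = \sum_{\xi \in \Z^d} \hat u(\xi)\,e^{2\pi\ii \xi \cdot x}$. The mean-zero hypothesis gives $\hat u(0) = 0$, so only non-zero frequencies contribute. The principal symbol $\Abb^k(\xi) = (2\pi\ii)^k \sum_{|\alpha|=k} \xi^\alpha A_\alpha$ is $k$-homogeneous and, by the constant rank hypothesis~\eqref{eq:constrank}, has constant rank $r$ on $\R^d\setminus\{0\}$. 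Consequently, the orthogonal projection $\pi(\xi)\colon \R^N \to \ker\Abb^k(\xi)$ depends smoothly on $\xi$ and is $0$-homogeneous. I then define $\Pcal$ to be the Fourier multiplier with symbol $\pi$, that is, $\widehat{\Pcal u}(\xi) := \pi(\xi)\hat u(\xi)$ for $\xi\ne 0$ and $\widehat{\Pcal u}(0) := 0$.

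Next I verify the two algebraic requirements directly from the Fourier side: $\int_Q \Pcal u \dd y = \widehat{\Pcal u}(0) = 0$, and $\widehat{\A(\Pcal u)}(\xi) = \Abb^k(\xi)\pi(\xi)\hat u(\xi) = 0$ for every $\xi$, because the range of $\pi(\xi)$ is $\ker \Abb^k(\xi)$. The identity $\pi(\xi)^2 = \pi(\xi)$ upgrades $\Pcal$ to a genuine projection. For the estimate, set $v := u - \Pcal u$, whose Fourier coefficients $(I - \pi(\xi))\hat u(\xi)$ lie in $(\ker \Abb^k(\xi))^\perp$, where $\Abb^k(\xi)$ is injective. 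Letting $\Abb^k(\xi)^\dagger$ denote its Moore--Penrose pseudoinverse, one has $\Abb^k(\xi)^\dagger \Abb^k(\xi) = I - \pi(\xi)$, and the constant rank condition ensures $\Abb^k(\xi)^\dagger$ is smooth on $\R^d\setminus\{0\}$ and $(-k)$-homogeneous. Since $\widehat{\A u}(\xi) = \Abb^k(\xi)\hat u(\xi)$, this yields the Fourier representation $\hat v(\xi) = \Abb^k(\xi)^\dagger\, \widehat{\A u}(\xi)$ for every $\xi \neq 0$. Combining this with the characterization of $\Wrm^{-k,p}_\per(Q)$ via the multiplier $|\xi|^{-k}$ on non-zero modes, the map $\A u \mapsto v$ is realized as a Fourier multiplier with matrix-valued symbol $m(\xi) := \Abb^k(\xi)^\dagger |\xi|^k$, which is smooth, bounded, and $0$-homogeneous on $\R^d \setminus\{0\}$. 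The Mikhlin--Hörmander multiplier theorem in its toroidal version then produces the $\Lrm^p$-to-$\Lrm^p$ bound with a constant $c_p$ depending on $p \in (1,\infty)$.

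The main obstacle is the regularity of $\pi$ and $\Abb^k(\,\cdot\,)^\dagger$ on $\Sbb^{d-1}$: this is precisely where the constant rank property enters essentially, preventing $\dim \ker \Abb^k(\xi)$ from jumping and keeping the family $\pi(\xi)$ a smooth field of orthogonal projections. Without constant rank, the Mikhlin symbol estimates $|D^\alpha m(\xi)| \lesssim |\xi|^{-|\alpha|}$ would fail and the whole multiplier argument would collapse; so this step is genuinely rigid. Once the regularity is in hand, the remainder is a standard packaging of classical Fourier multiplier theory on the torus.
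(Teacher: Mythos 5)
Your proposal is correct and follows essentially the same route as the paper: both define $\Pcal$ as the toroidal Fourier multiplier with symbol the orthogonal projection onto $\ker\Abb^k(\xi)$, verify the algebraic properties on the Fourier side, and obtain the $\Lrm^p$ estimate by expressing $u - \Pcal u$ through a $0$-homogeneous smooth multiplier applied to $\A u$ (your $\Abb^k(\xi)^\dagger$ is the paper's left inverse $\Qbb(\xi)$) and invoking Mikhlin. No gap.
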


\begin{remark} \label{rem:W-kq_F}
Here, $\Wrm^{k,p}_\per(Q)$ ($1 < p < \infty$) denotes the space of $\Wrm^{k,p}(Q)$-maps, which can be $Q$-periodically extended to a $\Wrm^{k,p}_\loc(\R^d)$-map; the space $\Wrm^{-k,q}_\per(Q)$ with $1/p + 1/q = 1$ is its dual. Note that the dual norm is equivalent to
\[
  \normBB{\Fcal^{-1}\biggl[\frac{\hat{u}(\xi)}{(1+\abs{\xi}^2)^{k/2}}\biggr]}_{\Lrm^q(Q)},
\]
where, for $\xi \in \Z^d$, $\hat{u}(\xi)$ denotes the Fourier coefficients on the torus and $\Fcal^{-1}$ is the inverse Fourier transform. In the case $\int_Q u \dd x = 0$ (hence $\hat{u}(0) = 0$) this norm is also equivalent to the norm
\[
  \normBB{\Fcal^{-1}\biggl[\frac{\hat{u}(\xi)}{\abs{\xi}^k}\biggr]}_{\Lrm^q(Q)}
\]
since the Fourier multipliers $(1+\abs{\xi}^2)^{-k/2}$ and $\abs{\xi}^{-k}$ are comparable (by the Mihlin multiplier theorem) for all $\xi$ with $\abs{\xi} \geq 1$.
\end{remark}

\begin{proof}
The proof given in \cite{FonsecaMuller99} technically applies only to first-order differential operators. However, the result can be extended to operators of any degree, as long as they are homogeneous. 
We shortly recall how this is done. By definition,  
\begin{equation}\label{projection1}
\rank \Abb^k(\xi) = \rank \Abb(\xi) = r \quad \text{for all $\xi \in \Sbb^{d-1}$}. 
\end{equation}
For each $\xi \in \R^d$ we 
write $\Pbb(\xi) : \R^N \to \R^N$ to denote the \emph{orthogonal projection} onto $\ker \Abb(\xi)$, and by $\Qbb(\xi)$ we denote the left inverse of $\Abb(\xi)$. 

It follows from the positive homogeneity of $\Abb$ that $\Pbb : \R^d\setminus\{0\} \to \R^N \otimes \R^N$ is $0$-homogeneous. Moreover, $\id_{\R^N} - \Pbb(\xi) = \Qbb(\lambda \xi)\Abb(\lambda \xi) = \lambda^k \Qbb(\lambda \xi)\Abb(\xi)$ and hence $\Qbb: \R^d\setminus\{0\} \to \R^N \otimes \R^N$ is homogeneous of degree $-k$. In light of~\eqref{projection1}, both maps are smooth (see Proposition 2.7 in~\cite{FonsecaMuller99}).

Since the map $\xi \mapsto \Pbb(\xi)$ is homogeneous of degree 0 and is infinitely differentiable in $\Sbb^{d-1}$, by Proposition 2.13 in~\cite{FonsecaMuller99}, the map defined on $\Crm^\infty_\per(Q;\R^N)$ by
\[
\Pcal u(w) \coloneqq  \sum_{\xi \in \Zbb^d \setminus \{0\}} \Pbb(\xi) \hat u(\xi) e^{2\pi \ii \xi \cdot w}, 
\]
where $\{\hat u(\xi)\}_{\Zbb^d}$ are the Fourier coefficients of $u \in \Lrm^p(Q;\R^N)$,
extends to a $(p,p)$-\emph{Fourier multiplier} $\Pcal$ on $\Lrm^p(Q;\R^N)$ for all $1 < p < \infty$. 

Since $\Pbb(\xi)$ is a projection, so it is $\Pcal$:
\begin{align*}
(\Pcal \circ \Pcal) u & = \sum_{\xi \in \Zbb^d \setminus \{0\}} (\Pbb(\xi) \circ \Pbb(\xi)) \hat u(\xi) e^{2\pi \ii \xi \cdot w} \\
& = \sum_{\xi \in \Zbb^d \setminus \{0\}} \Pbb(\xi) \hat u(\xi) e^{2\pi \ii \xi \cdot w} = \Pcal u.
\end{align*}
Moreover, 
\[
\widehat{(\A (\Pcal u))}(\xi) = \Abb(\xi) \widehat{(\Pcal u)}(\xi) = \Abb(\xi)[\Pbb(\xi)\hat u(\xi)] = 0
\]
for all $\xi \in \Zbb^d \setminus \{0\}$.  Since $\widehat{(\Pcal u)}(0) = 0$, we get 
\[
\int_Q \Pcal u \dd y = 0, \quad \text{and} \quad \A(\Pcal u) = 0.
\]

Finally, let $u \in \Crm^\infty_\per(Q;\R^N)$. We use that $\Abb$ and $\Qbb$ are $k$-homogeneous and $(-k)$-homogeneous, respectively, to show that
\begin{align*}
\hat u(\xi) - \widehat{\Pcal u}(\xi) & = (\id_{\R^N} - \Pbb(\xi))\hat u(\xi) \\
& = \Qbb(\xi)\Abb(\xi)\hat u(\xi) \\
&= \Qbb\bigg(\frac{\xi}{|\xi|}\bigg)\frac{1}{|\xi|^k}\Abb(\xi)\hat u(\xi),
\end{align*}
for all $\xi \in \Zbb^{d} \setminus \{0\}$. Therefore, the Mihlin multiplier theorem and Remark~\ref{rem:W-kq_F} imply
that
\[
\| u - \Pcal u\|_{\Lrm^{p}(Q)} \le c_p\|\A u\|_{\Wrm^{-k,p}_\per(Q)}
\]
for all $u \in \Crm^\infty_\per(Q;\R^N)$ with $\int_Q u \dd y = 0$. The general case follows by approximation.
\end{proof}

Lemma~\ref{lem:fonseca constant} implies that every $Q$-periodic $u \in \Lrm^p_\loc(\R^d;\R^N)$ with $1 < p < \infty$ and mean value zero can be decomposed as the sum
  \[
  u = v + w,  \qquad v = \Pcal u,
  \]
  where
\[
\A v = 0  \qquad\text{and}\qquad
\| w \|_{\Lrm^p(Q)} \leq c_p \|\A u \|_{\Wrm^{-k,p}_\per(Q)}. 
\]

A crucial issue in lower semicontinuity problems is the understanding of oscillation and concentration effects in weakly (weakly*) convergent sequences. In our setting, we are interested in sequences of asymptotically $\A$-free measures generating what we naturally term {\it $\A$-free Young measures}. The study of general $\A$-free Young measures can be reduced to understanding oscillations in the class of \emph{periodic} $\A$-free fields. This is expressed in the next lemma, which is a variant of Proposition~3.1 in~\cite{FonsecaLeoniMuller04} for higher-order operators (see also Lemma 2.20 in~\cite{BaiaChermisiMatiasSantos13}).

\begin{lemma}\label{lem:boundary values}
	Let $\A$ be an homogeneous linear partial differential operator satisfying the constant rank property~\eqref{eq:constrank}. Let $(u_j),(v_j) \subset \Lrm^1(Q;\R^N)$ be sequences such that 
	\[
		u_j - v_j \toweakstar 0  \quad\text{in $\Mcal(Q;\R^N)$}
		\qquad\text{and}\qquad
		|u_j| + |v_j| \toweakstar \Lambda  \quad\text{in $\M^+(\cl{Q})$}
	\]
	with $\Lambda(\partial Q) = 0$ and
	\[
	  \A(u_j - v_j)\to 0  \quad \text{in $\Wrm^{-k,q}(Q;\R^n)$} \qquad
	  \text{for some $1 < q < d/(d-1)$.}
    \]
  Assume that the sequence $(u_j)$ generates the Young measure $\nu \in \Y(Q;\R^N)$. Then, there exists another sequence $(z_j)\subset \Crm^\infty_\per(Q;\R^N)$ such that
	\[
	\A z_j = 0, \qquad \int_Q z_j = 0, \qquad z_j \toweakstar 0 \quad \text{in $\Mcal(Q;\R^N)$},
	\]
	and (up to taking a subsequence of the $v_j$'s) the sequence $(v_j + z_j)$ also generates the Young measure $\nu$, i.e., 
	\[
	(v_j + z_j)  \toY \nu \quad \text{in $\Y(Q;\R^N)$}.
	\]
	Moreover, for every   $f : \R^N \to \R$  Lipschitz  it holds that
	\begin{equation} \label{eq:pizza}
	\liminf_{j \to \infty} \int_Q f(u_j) \dd x \ge \liminf_{j \to \infty} \int_Q f(v_j + z_j) \dd x.
	\end{equation}
\end{lemma}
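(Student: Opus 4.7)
I would prove this by constructing, from $w_j := u_j - v_j$, a sequence of smooth $Q$-periodic mean-zero $\A$-free fields $z_j$ satisfying $z_j - w_j \to 0$ in $\Lrm^1(Q)$. Once this is achieved, all claims follow at once: $z_j \toweakstar 0$ in $\Mcal(Q;\R^N)$ combines $w_j \toweakstar 0$ with the $\Lrm^1$-closeness; the Young measure identity $(v_j + z_j) \toY \nu$ reduces via Lemma~\ref{lem:separation} to checking
\[
\biggl|\int_Q \phi(x)\bigl(h(v_j(x)+z_j(x))-h(u_j(x))\bigr)\dd x\biggr| \leq \|\phi\|_\infty \Lip(h)\,\|z_j - w_j\|_{\Lrm^1(Q)} \to 0
\]
for test integrands of the form $\phi \otimes h$ with $h$ Lipschitz, and \eqref{eq:pizza} is the same bound.

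For the construction, the first step is to truncate near $\partial Q$. Using $\Lambda(\partial Q) = 0$, pick cut-offs $\chi_\ell \in \Crm_c^\infty(Q;[0,1])$ with $\chi_\ell \nearrow \mathbf{1}_Q$; then $\limsup_j \int_Q (1 - \chi_\ell)|w_j|\dd x \leq \int_{\cl Q}(1 - \chi_\ell)\dd\Lambda \to 0$ as $\ell \to \infty$. A diagonal extraction yields $\ell(j) \to \infty$ slowly enough that $\chi_j := \chi_{\ell(j)}$ satisfies $\int_Q (1-\chi_j)|w_j|\dd x \to 0$, $\|\chi_j\|_{\Crm^k}\|w_j\|_{\Wrm^{-1,q}(Q)} \to 0$, and $\|\chi_j\|_{\Crm^k}\|\A w_j\|_{\Wrm^{-k,q}(Q)} \to 0$; the second convergence is achievable because the compact embedding $\Mcal(Q) \cembed \Wrm^{-1,q}(Q)$ (valid for $q < d/(d-1)$) gives $w_j \to 0$ strongly in $\Wrm^{-1,q}$. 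I would then set $\bar w_j := \chi_j w_j - \dashint_Q \chi_j w_j \dd x$, which is $\Lrm^1$, mean zero, compactly supported in $Q$ up to a vanishing constant (since $w_j \toweakstar 0$), and which satisfies $\bar w_j - w_j \to 0$ in $\Lrm^1(Q)$. After $Q$-periodic extension, a Leibniz expansion gives $\A \bar w_j = \chi_j \A w_j + R_j(w_j)$, where $R_j$ is a differential operator of order at most $k-1$ with coefficients among the derivatives of $\chi_j$; the diagonal bounds then yield $\A \bar w_j \to 0$ in $\Wrm^{-k,q}_\per(Q)$.

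The final step is to apply the Fonseca--M\"uller projection $\Pcal$ from Lemma~\ref{lem:fonseca constant}. Since $\Pcal$ is bounded only on $\Lrm^p$ for $p > 1$ whereas $\bar w_j$ is merely $\Lrm^1$, I would first mollify: for each fixed $j$, $\bar w_j$ is a single $\Lrm^1$-function, so $\|\bar w_j * \rho_\delta - \bar w_j\|_{\Lrm^1(Q)} \to 0$ as $\delta \to 0$, allowing a diagonal choice $\delta_j \to 0$ with $\bar w_j^{\delta_j} := \bar w_j * \rho_{\delta_j}$ satisfying $\|\bar w_j^{\delta_j} - \bar w_j\|_{\Lrm^1(Q)} \to 0$. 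Since convolution with an $\Lrm^1$-kernel of unit mass is a contraction on every Sobolev space, $\|\A \bar w_j^{\delta_j}\|_{\Wrm^{-k,q}_\per(Q)} \leq \|\A \bar w_j\|_{\Wrm^{-k,q}_\per(Q)} \to 0$. The field $\bar w_j^{\delta_j}$ is smooth, $Q$-periodic, and mean-zero, so $z_j := \Pcal \bar w_j^{\delta_j}$ lies in $\Crm^\infty_\per(Q;\R^N)$, is $\A$-free and mean-zero, and by Lemma~\ref{lem:fonseca constant},
\[
\|z_j - \bar w_j^{\delta_j}\|_{\Lrm^q(Q)} \leq c_q\,\|\A \bar w_j^{\delta_j}\|_{\Wrm^{-k,q}_\per(Q)} \to 0.
\]
Combining this with $\bar w_j^{\delta_j} - \bar w_j \to 0$ and $\bar w_j - w_j \to 0$ in $\Lrm^1(Q)$, and using $\Lrm^q(Q) \embed \Lrm^1(Q)$ on the bounded domain $Q$, yields $z_j - w_j \to 0$ in $\Lrm^1(Q)$, as desired.

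The main obstacle is bridging the gap between the merely $\Lrm^1$-regularity of the differences $w_j$ and the $\Lrm^p$-boundedness of the Mihlin-multiplier $\Pcal$. The resolution above exploits that, while $\Pcal$ is not bounded on $\Lrm^1$, each individual $\bar w_j$ is a fixed $\Lrm^1$-function whose mollifications converge in $\Lrm^1$, so a diagonal choice of $\delta_j$ brings us into the smooth regime while preserving $\Lrm^1$-proximity to $w_j$. A subsidiary subtlety is the simultaneous control of the growing $\Crm^k$-norms of the cut-offs $\chi_j$ against the decaying distributional norms of $w_j$ and $\A w_j$, which is handled by the slow-growth diagonal extraction in the first step.
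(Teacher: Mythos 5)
Your proof takes the same route as the paper: truncate near $\partial Q$ using $\Lambda(\partial Q) = 0$, mollify, shift to mean zero, extend $Q$-periodically, and apply the Fonseca--M\"uller projection of Lemma~\ref{lem:fonseca constant}. The main organizational difference is that you diagonalize the cut-off and mollification scales up front, arriving at the clean intermediate statement $z_j - w_j \to 0$ in $\Lrm^1(Q)$ from which all three conclusions follow at once, whereas the paper keeps the doubly-indexed family $w_j^m = \psi_m(u_j - v_j)$, proves the relevant estimates for each fixed $m$ as $j \to \infty$ (sandwich inequalities~\eqref{mare}--\eqref{mare2}), and diagonalizes only at the end over the countable separating family of Lemma~\ref{lem:separation}. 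Both work; your observation that the constraint $\int_Q (1 - \chi_{\ell(j)})\abs{w_j}\dd x \to 0$ holds for \emph{any} $\ell(j)\to\infty$ (by monotonicity of $\chi_\ell$, not a rate) is precisely what makes this single extraction consistent with the opposing slow-growth requirements on $\|\chi_{\ell(j)}\|_{\Crm^k}$. One bookkeeping point you should tighten: when the lower-order Leibniz terms $\partial^{\alpha-\beta}w_j\,\partial^\beta\chi_j$ and the term $\chi_j \A w_j$ are estimated in $\Wrm^{-k,q}_\per(Q)$ by pairing with a periodic test function $\phi$ (i.e.\ moving the cut-off onto $\phi$ so that the pairing acts against the compactly supported $\chi_j\phi$ or $\partial^\beta\chi_j\,\phi$), derivatives of $\chi_j$ up to order $2k$ enter, so the diagonal conditions should control $\|\chi_j\|_{\Crm^{2k}}$ rather than $\|\chi_j\|_{\Crm^k}$; this is cosmetic and absorbed by the extraction, but it is where the passage from $\Wrm^{-k,q}(Q)$ to $\Wrm^{-k,q}_\per(Q)$ actually costs something and should not be left implicit.
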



\begin{proof} Consider a family of cut-off functions $\psi_m \in \Crm^\infty_c(Q;[0,1])$ with
	$\psi_m \equiv 1$ in the set $\set{y \in Q }{\dist(y,\partial Q) > 1/m}$ and define
	\[
	w^m_j :=  (u_j - v_j)\psi_m \in \Lrm^1(Q;\R^N).
	\]
	Since $\psi_m \in \Crm^\infty_c(Q)$, it also holds that
	\[
	w^m_{j } \toweakstar 0 \quad \text{in $\M(Q;\R^N)$ \quad as $j \to \infty$, \quad for every $m \in \N$}.
	\]
 Furthermore, 
	\begin{equation}\label{eq:product}
	\A w^m_j =  \A(u_j - v_j) \psi_m + \sum_{\substack{|\alpha|= k,\\1 \le |\beta| \le  k}} c_{\alpha \beta}A_\alpha \partial^{\alpha - \beta} (u_j - v_j) \partial^\beta \psi_m
	\end{equation}
	where $c_{\alpha \beta} \in \Nbb$.
The convergence $u_j - v_j \toweakstar 0$ and the compact embedding $\Mcal(Q;\R^N) \cembed \Wrm^{-1,q}(Q;\R^N)$ entail, via~\eqref{eq:product}, the strong convergence
	\begin{equation}\label{eq:strongkq}
	\A w^m_j \to 0 \quad \text{in $\Wrm^{-k,q}(Q;\R^n)$} \qquad \text{as $j \to \infty$}.
	\end{equation}
	
	Let, for $\eps > 0$, $\rho_\eps(x) \coloneqq \rho(x/\eps)$ where $\rho \in \Crm^\infty_c(B_1)$ is an even mollifier. 
	For every $m \in \Nbb$, let $(\eps(j,m))_j$ be a sequence with $\eps(j,m) \todown 0$ as $j \to \infty$ such that for $\hat w_j^m \coloneqq w_j^m \ast \rho_{\eps(j,m)}$ it holds that
\[
  \|w_j^m - \hat w_j^m \|_{\Lrm^1(Q)} \leq \frac{1}{j}.
\]
	
	Fix $\phi \in \Wrm^{k,q}(Q;\R^n) \cap \Crm_c(Q;\R^n)$ and fix $m \in \Nbb$. Then, for $j \in \Nbb$ sufficiently large, it holds that
	\begin{align*}
	\absb{\dprb{\A \hat w_j^m, \phi}} & = \absb{\dprb{\A w_j^m, \phi \ast \rho_{\eps(j,m)}}} \notag \\
	& \le \|\A w_j^m\|_{\Wrm^{-k,q}(Q)} \|\phi \ast \rho_{\eps(j,m)}\|_{\Wrm^{k,q}(Q)}\\\notag
	& \le \|\A w_j^m\|_{\Wrm^{-k,q}(Q)} \|\phi\|_{\Wrm^{k,q}(Q)}. \notag
	\end{align*}
The case when $\phi$ belongs to $\Wrm^{k,q}_0(Q;\R^n)$ follows by approximation.
	Hence, from~\eqref{eq:strongkq} 
	we obtain that 
	\begin{equation}\label{eq:hat2}
	\|\A \hat w_j^m\|_{\Wrm^{-k,q}(Q)} \to 0 \quad \text{as $j \to \infty$, \quad for every $m \in \Nbb$}.
	\end{equation}


The second step consists of applying the projection of Lemma \ref{lem:fonseca constant} to the mollified functions $\hat w_j^m$. Define $\tilde w_j^m := \hat w^m_j - \int_Q \hat w_j^m \dd x$ (by a slight abuse of notation, we also denote by $\tilde w_j^m$ its $Q$-periodic extension to $\R^d$) and $z^m_j := \Pcal \tilde  w_j^m$. Note that since \(\tilde w_j^m\in \Crm^\infty (Q)\) the same holds for \(z^m_j\) since the projection operators commutes with the Fourier  multiplier \(|\xi|^s\) for all \(s\in \R\). 
It follows from  Lemma~\ref{lem:fonseca constant}  that 
	\begin{align}
	\lim_{j \to \infty} \| \hat w^m_j - z^m_j \|_{\Lrm^1(Q)} & \leq  \lim_{j \to \infty}\|\tilde w^m_j - z^m_j\|_{\Lrm^q(Q)} + \lim_{j \to \infty}\left| \int_Q \hat w^m_j \dd y \right|  \notag \\
	&\leq  c_q \cdot \lim_{j \to \infty} \|\A \hat w^m_j \|_{\Wrm^{-k,q}_\per(Q)}  + \lim_{j \to \infty}\left| \int_Q w^m_j \dd y \right|  \notag \\
	&= 0,  \label{eq: mq}
	\end{align}
	where in the first inequality we have exploited  Jensen's inequality, and for the last inequality we have used the equality of the norms  
	\begin{align*}
	\| u \|_{\Wrm_\per^{-k,p}(Q)} & = \| u \|_{\Wrm^{-k,p}(Q)},
	\end{align*} 
	which holds for functions $u \in \Crm_\per^\infty(Q)$ with $u = 0$ on $\partial Q$ and all $1 < p < \infty$,  together with~\eqref{eq:hat2}.
	
Let now \(g:\R^N\to \R\) be Lipschitz and let \(\varphi \in \Crm(\overline Q)\) with \(\varphi\ge 0\). Then,
\begin{align}
\int_Q \phi \,g(u_j) \dd y
&= \int_Q \phi\, g(u_j - v_j + v_j) \dd y  \notag\\
&\ge \int_Q \phi\, g(\hat w^m_j + v_j) \dd y - \|\phi\|_\infty\cdot \text{Lip}(g) \cdot \int_Q |1 - \psi_m|(|u_j| + |v_j|) \dd y \notag\\
& \qquad - \|\phi\|_\infty\cdot \text{Lip}(g)\cdot \|w_j^m - \hat w_j^m \|_{\Lrm^1(Q)} \notag\\
&\ge \int_Q \phi\, g(z^m_j + v_j) \dd y - \|\phi\|_\infty\cdot  \text{Lip}(g) \cdot \bigg(\int_Q |1 - \psi_m|(|u_j| + |v_j|) \dd y \notag\\
& \qquad + \|w_j^m - \hat w_j^m \|_{\Lrm^1(Q)} +  \|\hat w^m_j - z^m_j \|_{\Lrm^1(Q)}\bigg).  \label{mare}
\end{align}
Similarly, 
\begin{align}
\int_Q \phi \,g(u_j) \dd y\le &\int_Q \phi\, g(z^m_j + v_j) \dd y +\|\phi\|_\infty\cdot  \text{Lip}(g) \cdot \bigg(\int_Q |1 - \psi_m|(|u_j| + |v_j|) \dd y \notag\\
& \qquad + \|w_j^m - \hat w_j^m \|_{\Lrm^1(Q)} +  \|\hat w^m_j - z^m_j \|_{\Lrm^1(Q)}\bigg)   \label{mare2}
\end{align}
Let \(\{\phi_h\otimes g_h\}_{h=1}^\infty\) be the family of integrands appearing in Lemma~\ref{lem:separation}  and let \(\nu\) be the Young measure generated by \((u_j)\). We have that 
\[
\lim_{j\to\infty} \int_Q \phi_h \,g_h(u_j)=\ddprb{\phi_m \otimes g_m, \nu}\qquad\textrm{for all \(h=1,2,\ldots\)}
\]
and thus using \eqref{mare} and \eqref{mare2} above we infer that 
\begin{equation*}
\begin{split}
\limsup_{m \to \infty}\,
\limsup_{j  \to \infty}  \int_Q \phi_h \, g_h(z^m_j + v_j) \dd y 
  &\leq \lim_{j  \to \infty} \int_Q \phi_h\,g_h(u_j) \dd y \\
  & \leq \liminf_{m \to \infty}\,
  \liminf_{j  \to \infty}\int_Q \phi_h\,g_h(z^m_j + v_j) \dd y.
  \end{split}
\end{equation*}
for all \(h\in \N\) where we have also exploited that $\Lambda(\partial Q) = 0$.
By a  diagonalization argument on $z_j^m$ we may find a sequence 
 $(z_j) \subset \Crm^\infty_\per(Q;\R^N) \cap \ker \A$ such that 
 \[ 
\int_Q z_j \dd y = 0 \quad \text{for all $j \in \N$}, \qquad 
z_j \toweakstar 0 \quad \text{in $\M(Q;\R^N)$},\]
and, for all \(h\in \mathbb N\),
\begin{equation}\label{eq:sandwich}
 \lim_{j \to \infty} \int_Q \phi_h \,g_h(u_j) \dd y = 
 \lim_{j \to \infty} \int_Q  \phi_h \,g_h(z_j + v_j) \dd y.
\end{equation}
Since $(z_j + v_j)$ is uniformly bounded in $\Lrm^1(Q;\R^N)$, by Lemma~\ref{lem:YM_compact} we may find a subsequence $(z_{j(i)} + v_{j(i)}) \toY \tilde \nu \in \Y(Q;\R^N)$. In particular,
 \begin{gather*}
\ddprb{\phi_h \otimes g_h, \tilde \nu} = \ddprb{\phi_h \otimes g_h, \nu}, 
\end{gather*}
for all \(h\) and thus \(\tilde \nu=\nu\) by Lemma~\ref{lem:separation}.  Inequality~\eqref{eq:pizza} now follows by taking the limit inferior in~\eqref{mare} with \(g=f\) and \(\varphi\equiv 1\).
\end{proof}

\subsection{Scaling properties of $\A$-free measures} 
If $\A$ is a homogeneous operator, then 
\[
\A[T^{(x_0,r)}_\# \mu] = 0 \quad \text{on $(x_0 - \Omega)/r$},
\]
for all $\A$-free measures $\mu \in \M(\Omega;\R^N)$. In general, the re-scaled measure $T^{(x_0,r)}_\# \mu$ is a $(T^r_\ast\A)$-free measure in $(x_0 - \Omega)/r$, where $T^r_\ast\A$ is the operator defined by 
\[
T^r_\ast\A \coloneqq \sum_{h = 0}^k r^{k - h} \A^h,
\]
with $k$ the degree of the operator $\A$ and 
\[
\A^h \coloneqq \sum_{|\alpha|=h} A_\alpha \partial^\alpha, \quad \text{for $h = 0,\dots,k$}.
\]
Notice that, with this convention, $(T^r_\ast \A)^k = \A^k$.

In the sequel it will be often convenient to work with weak* convergent sequences whose elements are $(T^r_\ast\A)$-free measures. The following two results will be useful. 
%

%
%

\begin{proposition} \label{lem:nonh}
Let $r_m \todown 0$ be a sequence of positive numbers and let $(\mu_m)$ be a sequence of $\A$-free measures in $\M(\Omega;\R^N)$ with the following property: there are positive constants $c_m$ such that
\begin{equation}\label{eq:nonh}
\gamma_m \coloneqq c_{m}T^{(x_0,r_m)}_\# \mu_m \toweakstar \gamma \quad \text{in $\M_\loc(\R^d;\R^N)$}.
\end{equation}
Then,
\[
\A^k (c_mT^{(x_0,r_m)}_\#\mu_{m}) \to 0 \quad \text{in $\Wrm^{-k,q}_\loc(\R^d;\R^n)$} \quad \text{for all $1 < q < d/(d-1)$}.
\]
\end{proposition}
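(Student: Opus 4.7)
The plan is to exploit the scaling identity for the operator \(\A\) under the push-forward \(T^{(x_0,r)}_\#\) and then use the local weak* boundedness of \((\gamma_m)\) together with the compact embedding \(\Mcal_\loc \cembed \Wrm^{-1,q}_\loc\) that holds for every \(1<q<d/(d-1)\).

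First, I would record the scaling rule: testing against \(\phi \in \Crm^\infty_c(\R^d)\) and using the change of variables \(y = x_0 + rz\), one checks that for every multi-index \(\alpha\)
\[
\partial^\alpha \bigl(T^{(x_0,r)}_\# \mu\bigr) = r^{|\alpha|}\, T^{(x_0,r)}_\# \bigl(\partial^\alpha \mu\bigr),
\]
so that applying \(\A\) to the push-forward produces
\[
T^{(x_0,r)}_\#(\A\mu) = \sum_{h=0}^{k} r^{-h}\, \A^h \bigl(T^{(x_0,r)}_\#\mu\bigr).
\]
Since each \(\mu_m\) is \(\A\)-free, multiplying by \(c_m r_m^{k}\) yields the identity
\[
0 = (T^{r_m}_{\ast}\A)\gamma_m = \sum_{h=0}^{k} r_m^{k-h}\, \A^h \gamma_m,
\]
so that the principal part can be isolated as
\[
\A^k \gamma_m \;=\; -\sum_{h=0}^{k-1} r_m^{k-h}\, \A^h \gamma_m.
\]

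Next, the weak* convergence \(\gamma_m \toweakstar \gamma\) in \(\Mcal_\loc(\R^d;\R^N)\) combined with the Banach--Steinhaus principle gives local uniform bounds \(|\gamma_m|(K) \leq C_K\) for every compact \(K\subset \R^d\). For \(1 < q < d/(d-1)\), the conjugate exponent \(q'\) is strictly larger than \(d\), so the Sobolev embedding \(\Wrm^{1,q'}_0(K) \embed \Crm_0(K)\) is compact; by duality this yields the compact embedding \(\Mcal_\loc(\R^d) \cembed \Wrm^{-1,q}_\loc(\R^d)\). Consequently, for each fixed \(h \in \{0,\dots,k\}\),
\[
\A^h \gamma_m = \sum_{|\alpha|=h} A_\alpha \partial^\alpha \gamma_m
\]
is uniformly bounded in \(\Wrm^{-h-1,q}_\loc(\R^d;\R^n)\), and in particular in \(\Wrm^{-k,q}_\loc(\R^d;\R^n)\).

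Finally, since \(r_m^{k-h}\todown 0\) for every \(h<k\), each term in the sum tends to zero in \(\Wrm^{-k,q}_\loc\), and the identity above gives
\[
\A^k \gamma_m \to 0 \quad\text{in } \Wrm^{-k,q}_\loc(\R^d;\R^n),
\]
as required. The only slightly delicate step is the compact embedding used to bound \(\A^h \gamma_m\) in a negative Sobolev space; everything else is a direct scaling computation combined with the vanishing prefactors \(r_m^{k-h}\).
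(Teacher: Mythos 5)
Your proof is correct and follows essentially the same route as the paper: the scaling identity for $T^{(x_0,r)}_\#$, isolating the principal part $\A^k\gamma_m=-\sum_{h<k}r_m^{k-h}\A^h\gamma_m$, and using the embedding $\Mcal_\loc\cembed\Wrm^{-1,q}_\loc$ to conclude. The only cosmetic difference is that you factor the scalar $r_m^{k-h}$ outside the operator and invoke uniform boundedness of $\A^h\gamma_m$ in $\Wrm^{-k,q}_\loc$ (which actually only requires continuity of the embedding), whereas the paper keeps the scalar inside and uses compactness to upgrade $r_m^{k-h}\gamma_m\toweakstar 0$ to strong $\Wrm^{-1,q}_\loc$-convergence; the two variants are interchangeable.
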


\begin{proof} Fix $r > 0$. 
Then,
	\begin{equation}\label{eq:changeb}
	\A^k(T^{(x_0,r)}_\# \mu_m) =  - \sum_{h = 0}^{k-1} \A^h(r^{k -h}T^{(x_0,r)}_\# \mu_m).
	\end{equation}
 Since 
	\begin{equation*}
	r_m^{k-h}c_mT^{(x_0,r_m)}_\#\mu_m \toweakstar 0 \quad \text{in $\M_\loc(\R^d;\R^N)$,  \quad \text{for every $h = 0,\dots,k-1$}},
	\end{equation*}
	the compact embedding $\Mcal_\loc(\R^d;\R^N) \cembed \Wrm^{-1,q}_\loc(\R^d;\R^N)$ entails the strong convergence 
	\[
	r_m^{k-h}c_mT^{(x_0,r_m)}_\# \mu_m\to 0 \quad \text{in $\Wrm^{-1,q}_\loc(\R^d;\R^N)$} \qquad \text{for every $h = 0, \ldots,  k-1$}.
	\]
Hence, 
	\begin{equation}\label{eq:blowb}
	\A^h (r_m^{k-h}c_mT^{(x_0,r)}_\#\mu_m) \to 0 \quad \text{in $\Wrm^{-k,q}_\loc(\R^d;\R^n)$}
	\end{equation}
	for every $h = 0, \ldots,  k-1$.
The assertion then follows from~\eqref{eq:changeb} and~\eqref{eq:blowb}.
\end{proof}

\subsection{Fourier coefficients of $\A^k$-free sequences} We shall denote the subspace generated by the wave cone $\Lambda_{\A}$ by 
\[
V_{\A} := \mathrm{span}\, \Lambda_{\A} \subset \R^N.
\]


Using Fourier series it is relatively easy to understand the rigidity of $\A^k$-free periodic fields. 
To fix ideas, let $u$ be a $Q$-periodic field in $\Lrm^2_\loc(\R^d;\R^N) \cap \ker \A^k$  with mean value zero (or equivalently $\hat u(0) = 0$). Applying the Fourier transform to $\A^k u = 0$, we find that 
\begin{equation*}\label{eq: torus equation}
0 = \Fcal(\A^k u)(\xi) = \Abb^k(\xi) \hat u(\xi) \qquad \text{for all $\xi \in \mathbb Z^d$}.
\end{equation*}
Hence, $\hat u(\xi) \in \ker_{\CC} \mathbb A^k(\xi)$ for every $\xi \in \Z^d$ (here, $\mathbb A^k(\xi)$ is understood as a complex-valued tensor). In particular, 
\[
\setb{\hat u(\xi) }{\xi \in \mathbb Z^d}\subset \mathbb C \Lambda_\A.
\]
Since $u$ is a real vector-valued function, it immediately follows that
\begin{equation}\label{eq: subspace}
u \in \Lrm^2_\loc(\R^d;V_\A).
\end{equation}
Using a density argument one can show that, up to a constant term, also $Q$-periodic functions in $\Lrm^1_{\loc}(Q;\R^N) \cap \ker \A^k$ take values only in $V_{\A}$. The relevance of this observation will be used later in conjunction with Lemma~\ref{lem:boundary values} in Lemma~\ref{lem:subspace}.

\subsection{$\A$-quasiconvexity}
We state some well-known and some more recent results regarding the properties of $\A$-quasiconvex integrands. This  notion was first  introduced by Morrey~\cite{Morrey66book}  in the case of curl-free vector fields, where it is known as {\it quasiconvexity}, and  later extended by Dacorogna~\cite{Dacorogna82b} and Fonseca--M\"uller~\cite{FonsecaMuller99} to general linear PDE-constraints. 

A Borel function $h \colon \R^N \to \R$ is called \emph{$\A$-quasiconvex} if
\[
  h(A) \leq \int_Q h(A + w(y)) \dd y
\]
for all $A \in \R^N$ and all $Q$-periodic $w \in \Crm^\infty(\R^d;\R^N)$ such that 
\[\A w = 0 \qquad \text{and} \qquad
\int_Q w  \dd x = 0.
\]

For functions $h$ that are not $\A$-quasiconvex one may define the largest $\A$-quasiconvex function below $h$. 

%
	
	\begin{definition}[$\A$-quasiconvex envelope]\label{def:quasi}
	Given a Borel function $h\colon \R^N \to \R$ we define the {$\A$-quasiconvex} envelope of $h$ at $A \in \R^N$ as
	\begin{align*}
	\label{eq: quasi} (Q_{\A}h)(A) := \inf\setBB{\int_{Q} h&(A + w(y)) \dd y }{ w \in \Crm^\infty_\per(Q;\R^N) \cap \ker \A, \; \int_Qw \dd y = 0}.
	\end{align*} 
	For a map $f \colon \Omega \times \R^N \to \R$ we write $Q_{\A}f(x,A)$ for $(Q_{\A}f(x,\frarg))(A)$ by a slight abuse of notation.
\end{definition}

We recall from~\cite{FonsecaMuller99} that the $\A$-quasiconvex envelope of an  upper semicontinuous function is $\A$-quasiconvex and that it is actually the largest $\A$-quasiconvex function below $h$.

\begin{lemma}\label{lem:qce_property}
If $h\colon \R^N \to [0,\infty)$ is upper semicontinuous, then 
	$Q_{\A}h$ is upper semi-continuous and $\A$-quasiconvex. Furthermore, $Q_{\A}h$ is the largest $\A$-quasiconvex function below $h$.
\end{lemma}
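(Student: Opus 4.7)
The plan is to verify the three assertions of the lemma in turn: maximality, upper semicontinuity, and $\A$-quasiconvexity. Maximality is immediate from the definition: if $g$ is $\A$-quasiconvex with $g\le h$, then for every admissible test field $w\in \Crm^\infty_\per(Q;\R^N)\cap\ker\A$ with zero mean, one has $g(A)\le \int_Q g(A+w)\,\dd y\le \int_Q h(A+w)\,\dd y$; passing to the infimum over $w$ gives $g\le Q_\A h$. Note also that choosing $w\equiv 0$ in the definition of $Q_\A h$ yields $0\le Q_\A h(A)\le h(A)<\infty$ for every $A$.

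For the upper semicontinuity, given $A_n\to A$ and $\varepsilon>0$, I would fix a single test field $w_\varepsilon$ with $\int_Q h(A+w_\varepsilon)\,\dd y\le Q_\A h(A)+\varepsilon$. Because $w_\varepsilon$ is smooth and $Q$-periodic, the values $A_n+w_\varepsilon(y)$ lie in a fixed compact subset of $\R^N$ on which the upper semicontinuous function $h\ge 0$ is bounded above. The reverse Fatou lemma applied to $y\mapsto h(A_n+w_\varepsilon(y))$, combined with the upper semicontinuity of $h$, then gives $\limsup_n Q_\A h(A_n)\le \limsup_n \int_Q h(A_n+w_\varepsilon)\,\dd y\le \int_Q h(A+w_\varepsilon)\,\dd y\le Q_\A h(A)+\varepsilon$, and sending $\varepsilon\downarrow 0$ yields the claim.

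The core step is the $\A$-quasiconvexity. Fix $A_0\in \R^N$ and an admissible $v\in \Crm^\infty_\per(Q;\R^N)\cap\ker\A$ with zero mean; the goal is $Q_\A h(A_0)\le \int_Q Q_\A h(A_0+v(y))\,\dd y$. First I would reduce to the case of continuous $h$: since $h\ge 0$ is upper semicontinuous there exists a decreasing sequence of continuous $h_k\searrow h$, and one checks directly from the definition that $Q_\A h_k\searrow Q_\A h$ pointwise, so it is enough to treat the continuous case. For continuous $h$, partition $Q$ into small cubes $Q_i$ of side $\delta$ centred at $y_i$ and, using upper semicontinuity of $Q_\A h$ together with a measurable selection, pick admissible test fields $w_i$ satisfying $\int_Q h(A_0+v(y_i)+w_i)\,\dd y\le Q_\A h(A_0+v(y_i))+\varepsilon$. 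Rescale each $w_i$ to oscillate at a very fine scale inside $Q_i$ and glue the local pieces via cutoffs to obtain a periodic function $\widetilde W$ with zero mean whose $\A$-defect is concentrated in a thin neighbourhood of $\bigcup_i\partial Q_i$ and tends to $0$ in $\Wrm^{-k,p}_\per(Q;\R^n)$ as $\delta\to 0$. Applying the constant-rank projection of Lemma~\ref{lem:fonseca constant} produces a genuinely $\A$-free competitor $W$ at controlled $\Lrm^p$-distance from $\widetilde W$. Exploiting continuity of $h$ on bounded sets, a Riemann-sum approximation for the smooth $v$, and a diagonal extraction in the parameters $(\delta,\varepsilon)$, one obtains $Q_\A h(A_0)\le \int_Q h(A_0+W)\,\dd y\le \int_Q Q_\A h(A_0+v(y))\,\dd y+o(1)$, which yields the desired inequality after passing to the limit.

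The main obstacle is the construction and energy analysis of the competitor $W$ in the $\A$-quasiconvexity step: gluing rescaled test fields associated to different ``values'' $A_0+v(y_i)$ without destroying the constraint $\A W=0$ requires careful balancing of the oscillation scale of the $w_i$ against the projection error provided by Lemma~\ref{lem:fonseca constant}. The reduction to continuous $h$ is essential here, since otherwise one cannot directly convert $\Lrm^p$-smallness of $W-\widetilde W$ into smallness of the energy $\int h(A_0+W)\,\dd y$ under the mere upper semicontinuity assumption on $h$.
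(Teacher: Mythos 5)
The paper itself offers no proof of this lemma; the statement is recalled from Fonseca--M\"uller~\cite{FonsecaMuller99}, so there is no internal proof to compare against, and your proposal is an attempted reconstruction. Your maximality and upper semicontinuity arguments are correct and complete: maximality is immediate, and upper semicontinuity via a fixed near-optimal test field together with reverse Fatou (using that $h\ge 0$ is u.s.c.\ and hence bounded above on the compact set $\{A_n+w_\varepsilon(y):y\in \overline{Q}\}$ for large $n$) is the standard route.

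For $\A$-quasiconvexity you propose the right construction -- partition $Q$, insert in each sub-cube a near-optimal periodic $\A$-free competitor oscillating at a well-separated finer scale, cut off, sum, and project onto $\ker\A$ with Lemma~\ref{lem:fonseca constant} -- and you correctly flag the $\Wrm^{-k,p}_\per(Q)$-estimate of the commutator terms produced by the cutoffs as the crux. This is where homogeneity of $\A$ enters: the lower-order derivatives $\partial^{\alpha-\beta}[w_i(n\,\cdot)]$ with $|\alpha-\beta|<k$ blow up in $\Lrm^p$ as $n\to\infty$, but vanish in $\Wrm^{-1-|\alpha-\beta|,p}\hookrightarrow\Wrm^{-k,p}$ by compactness, since $w_i(n\,\cdot)\toweak 0$. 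What should be made explicit is that a growth hypothesis on $h$ is genuinely used at two points in your sketch. First, a decreasing sequence of continuous $h_k\searrow h$ (say by inf-convolution) exists once one knows $h$ has at most $p$-growth; this is not automatic for an arbitrary nonnegative u.s.c.\ function. Second, and more substantively, the projected competitor $W=\Pcal\widetilde W$ is only $\Lrm^p$-close to the uniformly bounded $\widetilde W$, since $\Pcal$ is $\Lrm^p$-bounded but not $\Lrm^\infty$-bounded; converting $\|W-\widetilde W\|_{\Lrm^p}\to 0$ into $\int_Q h(A_0+W)-\int_Q h(A_0+\widetilde W)\to 0$ therefore needs $p$-growth of $h$, not bare continuity. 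The lemma is stated in the paper without such a hypothesis, but in~\cite{FonsecaMuller99} and everywhere this paper invokes it the integrand has linear growth, so this is a latent assumption rather than a defect in your plan. Two minor points: no measurable selection is required since the sample points $y_i$ are finitely many; and the Riemann-sum passage $\sum_i|Q_i|\,Q_\A h(A_0+v(y_i))\to\int_Q Q_\A h(A_0+v)$ should be phrased as a $\limsup$-inequality obtained from the upper semicontinuity of $Q_\A h$ (which you establish first) and reverse Fatou -- this one-sided inequality is exactly the direction you need.
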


%

\subsection{$\mathcal D$-convexity}\label{Dconv}Let $\mathcal D$ be a balanced cone of directions in $\R^N$, i.e., we assume that $tA \in \mathcal D$ for all $A \in \mathcal D$ and every $t \in \R$. A real-valued function $h \colon \R^N \to \R$ is said to be $\mathcal D$-convex provided its restrictions to all line segments in $\R^N$ with directions in $\mathcal D$ are convex.  Here, $\Dcal$ will always be 
the wave cone $\Lambda_{\A}$ for the linear PDE operator $\A$.


\begin{lemma}\label{lem:cone_convex} 
Let $h:\R^N \to [0,\infty)$ be an integrand with linear growth at infinity. Further, suppose that $h$ is $\A^k$-quasiconvex. Then, $h$ is $\Lambda_{\A}$-convex.
\end{lemma}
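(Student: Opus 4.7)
Fix $A \in \R^N$ and $P_0 \in \Lambda_\A\setminus\{0\}$ (the case $P_0 = 0$ is trivial); by definition of the wave cone, pick $\xi_0 \in \Sbb^{d-1}$ with $\Abb^k(\xi_0) P_0 = 0$. To prove that $t \mapsto h(A + tP_0)$ is convex, I would argue by a standard binary-splitting reduction to the midpoint-type Jensen inequality
\[
h(A) \leq \lambda \, h(A + t_1 P_0) + (1-\lambda) \, h(A + t_2 P_0), \qquad \lambda \in (0,1),\ \lambda t_1 + (1-\lambda) t_2 = 0.
\]
The plan is to realise the target two-point laminate as the Young measure of a bounded $\A^k$-free plane wave oscillating in the direction $\xi_0$, to then promote that plane wave to a $Q$-periodic admissible competitor via Lemma~\ref{lem:boundary values}, and finally to conclude by combining $\A^k$-quasiconvexity with Young measure convergence.

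First I would introduce $\chi:\R \to \R$, the $1$-periodic extension of $t_1\ONE_{[0,\lambda)} + t_2\ONE_{[\lambda,1)}$, which is bounded with $\int_0^1 \chi = 0$, and set $u_j(x) := P_0\, \chi(j\, x\cdot\xi_0)$. Every distributional derivative of $\chi(j x\cdot\xi_0)$ is a scalar multiple of $\xi_0^\alpha$, so that
\[
\A^k u_j = (2\pi\ii)^{-k}\, j^k\, \chi^{(k)}(j\,x\cdot\xi_0)\,\Abb^k(\xi_0)\, P_0 = 0 \qquad \text{in } \Dcal'(\R^d;\R^n).
\]
The sequence $(u_j)$ is uniformly bounded in $\Lrm^\infty$, and a direct Fourier/equidistribution computation on $Q$ -- the oscillatory integrals $\int_Q e^{2\pi\ii j k\, x\cdot\xi_0}\dd x$ vanish in the limit for every $k \in \Zbb\setminus\{0\}$ -- shows that $(u_j)$ generates the homogeneous Young measure
\[
\nu_x = \lambda\, \delta_{t_1 P_0} + (1-\lambda)\, \delta_{t_2 P_0}, \qquad \lambda_\nu = 0,
\]
constant in $x$ and with barycenter zero; in particular $u_j \toweakstar 0$ in $\M(Q;\R^N)$.

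Next I would apply Lemma~\ref{lem:boundary values} with $\A = \A^k$, the sequence $u_j$ above, and $v_j \equiv 0$. All hypotheses hold trivially: $u_j - v_j \toweakstar 0$; $|u_j| + |v_j|$ is uniformly bounded, so its weak* limit is absolutely continuous and charges no part of $\partial Q$; and $\A^k(u_j - v_j) = 0$ identically. The lemma then furnishes $(z_j) \subset \Crm^\infty_\per(Q;\R^N)$ with $\A^k z_j = 0$, $\int_Q z_j\dd y = 0$, $z_j \toweakstar 0$, and (along a subsequence) generating the \emph{same} Young measure $\nu$. These $z_j$ are therefore admissible in the definition of $\A^k$-quasiconvexity at $A$, giving $h(A) \leq \int_Q h(A + z_j(y))\dd y$. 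Since $\lambda_\nu = 0$ and $h$ is continuous with linear growth, Corollary~\ref{cor:representation} applied in both its upper- and lower-semicontinuous forms to the integrand $z \mapsto h(A + z)$ lets me pass to the limit
\[
\lim_{j \to \infty} \int_Q h(A + z_j(y))\dd y = \int_Q \dpr{h(A + \frarg),\, \nu_y}\dd y = \lambda\, h(A + t_1 P_0) + (1-\lambda)\, h(A + t_2 P_0),
\]
which delivers the required Jensen inequality.

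The hard part is \emph{not} any of the above but the initial obstruction that the plane wave $u_j$ fails to be $Q$-periodic whenever $\xi_0$ has irrational components, and so cannot itself be plugged into the $\A^k$-quasiconvexity inequality. All of the technical manipulations needed to repair this -- cutting off in a compactly supported way inside $Q$, periodising, subtracting the mean, and absorbing the resulting small $\Wrm^{-k,q}_\per$-error via the Fonseca--M\"uller projection in Lemma~\ref{lem:fonseca constant} -- are already encapsulated in Lemma~\ref{lem:boundary values}. This is exactly what makes the present argument as short as it is.
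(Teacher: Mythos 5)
Your argument is correct in substance but takes a genuinely different and heavier route than the paper's. The paper uses a direct construction: it mollifies the step function at a fixed small scale $\eps$, forms the smooth plane wave $u_\eps = P\chi_\eps(y\cdot\xi)$, plugs it into the $\A^k$-quasiconvexity inequality, and bounds the contribution of the $\eps$-thin transition region using only the linear growth and nonnegativity of $h$ --- no Young measures and no projection lemma. You instead use an unmollified plane wave to generate the laminate Young measure, pass it through Lemma~\ref{lem:boundary values} (which in turn rests on Lemma~\ref{lem:fonseca constant} and the constant rank condition) to manufacture genuine $\Crm^\infty_\per(Q;\R^N)\cap\ker\A^k$ competitors $z_j$ generating the same Young measure, and then pass to the limit in the quasiconvexity inequality by Young measure convergence. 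Your concern about periodicity is apt: the plane wave $P\chi_\eps(y\cdot\xi)$ is $Q$-periodic only when $\xi$ is a lattice direction, and the paper's assertion that it belongs to $\Crm^\infty_\per(Q;\R^N)$ leaves the irrational case (handled in the literature by rational approximation under constant rank) to the reader; routing through Lemma~\ref{lem:boundary values} is one clean way to address this. The trade-off is that your final limit passage $\int_Q h(A+z_j)\dd y \to \lambda h(A+t_1 P_0)+(1-\lambda)h(A+t_2 P_0)$ implicitly requires $h$ to be continuous --- you invoke Corollary~\ref{cor:representation} in both its usc and lsc forms --- whereas the lemma assumes only a Borel integrand with linear growth, and the paper's direct estimate needs no such regularity. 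Since in all the paper's applications $h$ is at least upper semicontinuous this costs nothing in practice, but strictly speaking your proof establishes the lemma under a slightly stronger hypothesis.
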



\begin{proof} Let $\xi \in \Sbb^{d-1}$ and let $A_1, A_2 \in \R^d$ with $P \coloneqq A_1 - A_2 \in \ker \Abb^k(\xi)$. We claim that
\[
h(\theta A_1 + (1 - \theta)A_2) \le \theta h(A_1) + (1-\theta)h(A_2), \quad \text{for all $\theta \in (0,1)$}.
\]
Fix such a $\theta$ and consider the one-dimensional {$1$-periodic} function
\[
\chi(s) \coloneqq (1 - \theta)\mathds 1_{[0,\theta)}(s) - \theta \mathds 1_{[\theta,1)}(s), \quad s \in [0,1), 
\]
which has zero mean value. Fix $\eps \in \min\{\theta/2,(1 - \theta)/2\}$ so that the mollified function $\chi_\eps \coloneqq \chi \ast \rho_\eps$ has the following properties:
\[
\big|\setb{s}{\chi_\eps = 1 - \theta}\big| \ge \theta - 2\eps, \quad \big|\setb{s}{\chi_\eps = -\theta}\big| \ge (1- \theta) - 2\eps.
\] 

Define the sequence of $Q$-periodic functions
\[
u_\eps \coloneqq P\chi_\eps(y\cdot\xi).
\]
By construction, this is a $\Crm^\infty_\per(Q;\R^N)$ function, it has zero mean value in $Q$, and
since $P \in \ker \Abb^k(\xi)$, it is easy to check that 
\[
\A^k u_\eps = (2\pi\ii)^{-k} \frac{\dd^k  \chi_\eps}{\dd s^k}(y \cdot \xi) \ \Abb^k(\xi) P = 0 \quad \text{in the sense of distributions}.
\]
Hence, by the definition of $\A^k$-quasiconvexity and our choice of $\eps$, we have 
\begin{align*}
h(\theta A_1 + (1 - \theta)A_2) & \le \int_Q h(\theta A_1 + (1 - \theta)A_2 + u_\eps) \dd y \\
& \le (\theta - 2\eps)h(A_1) + ((1 - \theta) - 2\eps)h(A_2) \\
& \quad + M(1 + |A_1| + |A_2| + |P|)4\eps
\end{align*}
Letting $\eps \todown 0$ in the previous inequality yields the claim.
\end{proof}

The following is an immediate consequence of Lemmas~\ref{lem:qce_property} and~\ref{lem:cone_convex}.

\begin{corollary}\label{cor:coneconvexity} If $h\colon \R^N \to [0,\infty)$ is upper semicontinuous, then 
	$(Q_{\A^k} h)^\#$ is an $\A^k$-quasiconvex and $\Lambda_\A$-convex function. 
	\end{corollary}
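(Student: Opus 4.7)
The plan is to chain together Lemmas~\ref{lem:qce_property} and~\ref{lem:cone_convex} via a short intermediate step showing that the upper recession operation $(\cdot)^\#$ preserves $\A^k$-quasiconvexity in the linear-growth regime (which is the only setting of interest here, since otherwise $(Q_{\A^k}h)^\#$ need not be finite-valued).

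First, I would apply Lemma~\ref{lem:qce_property} to the upper semicontinuous integrand $h$ and conclude that $g \coloneqq Q_{\A^k}h$ is upper semicontinuous and $\A^k$-quasiconvex. Under the implicit linear-growth bound on $h$, the envelope $g$ inherits the same linear-growth bound, which is essential for controlling $g^\#$.

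The key intermediate step is to verify that $g^\#$ is itself $\A^k$-quasiconvex. Fix $A \in \R^N$ and an admissible test field $w \in \Crm^\infty_\per(Q;\R^N) \cap \ker \A^k$ with $\int_Q w \dd y = 0$, and pick a sequence $(A_n, t_n)$ with $A_n \to A$, $t_n \to \infty$, and $g(t_n A_n)/t_n \to g^\#(A)$. Since $t_n w$ is again admissible, $\A^k$-quasiconvexity of $g$ applied at $t_n A_n$ yields
\[
\frac{g(t_n A_n)}{t_n} \le \int_Q \frac{g\bigl(t_n(A_n + w(y))\bigr)}{t_n} \dd y.
\]
Passing to the $\limsup$ in $n$, together with reverse Fatou (justified by the $\Lrm^1(Q)$ dominator $M(1 + \sup_n|A_n| + |w(y)|)$ coming from linear growth of $g$) and the pointwise inequality $\limsup_n g(t_n(A_n + w(y)))/t_n \le g^\#(A + w(y))$, I would deduce
\[
g^\#(A) \le \int_Q g^\#(A + w(y)) \dd y,
\]
which is precisely the $\A^k$-quasiconvexity inequality for $g^\#$. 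Borel measurability of $g^\#$ comes for free, since $g^\#$ is upper semicontinuous as a $\#$-recession.

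Finally, $g^\# = (Q_{\A^k}h)^\#$ is positively $1$-homogeneous and hence has linear growth at infinity trivially, so Lemma~\ref{lem:cone_convex} applies directly and produces the $\Lambda_\A$-convexity. The only technically delicate point in the whole argument is the interchange of $\limsup$ and the integral, which is handled cleanly by the reverse Fatou lemma using the linear-growth dominator; I do not anticipate any genuine obstacle.
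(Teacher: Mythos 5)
Your proof is correct and follows the same route the paper has in mind, which is merely asserted to be ``an immediate consequence'' of Lemmas~\ref{lem:qce_property} and~\ref{lem:cone_convex} without further detail. The intermediate step you supply --- that the upper recession operation preserves $\A^k$-quasiconvexity for nonnegative integrands of linear growth, obtained by testing the quasiconvexity inequality at $t_nA_n$ against the dilated field $t_n w$ and passing to the $\limsup$ via reverse Fatou --- is precisely the unstated step needed before Lemma~\ref{lem:cone_convex} can be invoked, and your handling of the (implicitly assumed) linear-growth bound is the right observation.
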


To continue our discussion we define the notion of \emph{convexity at a point}. Let $h:\R^N \to \R$ be a Borel function. We recall that Jensen's definition of convexity states that $h$ is convex if and only if
\begin{equation}\label{Jensenineq}
f\bigg(\int_{\R^N} A \dd \nu(A) \bigg) \le \int_{\R^N} h(A) \dd \nu(A) 
\end{equation}
for all probability measures $\nu \in \M^1(\R^N)$.

A Borel function $h:\R^N \to \R$ is said to be \emph{convex at a point} $A_0 \in \R^N$ if~\eqref{Jensenineq} holds for 
for all probability measures $\nu$ with barycenter $A_0$, that is, every $\nu \in \M^1(\R^N)$ with $\int_{\R^N} A \dd \nu = A_0$.

Returning to the convexity properties of $\A^k$-quasiconvex functions, it was recently shown by  Kirchheim and Kristensen~\cite{KirchheimKristensen11,KirchheimKristensen16} that   $\A^k$-quasiconvex and positively $1$-homogeneous integrands are actually {\em convex at points} of $\Lambda_\A$ as long as 
\begin{equation}\label{eq:span}
\text{span} \, \Lambda_\A = \R^N. 
\end{equation}

In fact, their result is valid in the more general framework of $\mathcal D$-convexity:
\begin{theorem}[Theorem~1.1 of~\cite{KirchheimKristensen16}]\label{thm:KK}
	Let $\mathcal D$ be a balanced cone of directions in $\R^N$ and assume that $\mathcal D$ spans $\R^N$. If $h\colon \R^N \to \R$ is $\mathcal D$-convex and positively $1$-homogeneous, then $h$ is convex  at each point of $\mathcal D$.
	\end{theorem}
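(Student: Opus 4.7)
I would reformulate convexity of $h$ at $P_0 \in \mathcal D$ as the existence of a \emph{supporting linear functional}, i.e., an $L \in (\R^N)^*$ with $L \le h$ on $\R^N$ and $L(P_0) = h(P_0)$. By positive $1$-homogeneity this is equivalent to Jensen's inequality against every probability measure $\nu$ with barycenter $P_0$, since then $h(P_0) = L(P_0) = \int L \,\di\nu \le \int h \,\di\nu$. To produce such an $L$, I would work with the convex envelope $Ch := \sup\{g \le h : g \text{ convex}\}$. Since $h$ is continuous ($\mathcal D$-convexity along a family spanning $\R^N$ forces local boundedness, which combined with $1$-homogeneity yields $|h(A)| \le C|A|$), the envelope $Ch$ is finite, convex, positively $1$-homogeneous, and hence sublinear with a nonempty subdifferential at every point. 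Any element of $\partial Ch(P_0)$ would be the sought $L$, so the task reduces to proving the equality $Ch(P_0) = h(P_0)$ for every $P_0 \in \mathcal D$.

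A useful preliminary step is \emph{$\mathcal D$-subadditivity} of $h$:
\[
h(A + B) \le h(A) + h(B) \qquad \text{for all } A \in \R^N, \; B \in \mathcal D.
\]
This follows from convexity of $t \mapsto h(A + tB)$ together with $\lim_{t \to \infty} h(A + tB)/t = h(B)$ (by continuity and $1$-homogeneity): comparing the slope on $[0,1]$ with the asymptotic slope via convexity of a one-variable function yields the bound.

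Now assume for contradiction that $Ch(P_0) < h(P_0)$. By Carath\'eodory, pick $A_1, \ldots, A_m \in \R^N$ and weights $\lambda_i > 0$ summing to $1$ with $\sum_i \lambda_i A_i = P_0$ and $\sum_i \lambda_i h(A_i) < h(P_0)$. The strategy is to produce a \emph{lower} bound of the form $h(A_i) \ge h(P_0) + \Phi(A_i - P_0)$ where $\Phi : \R^N \to \R$ is linear; averaging this against the weights and using $\sum_i \lambda_i (A_i - P_0) = 0$ then gives $\sum_i \lambda_i h(A_i) \ge h(P_0)$, contradicting our assumption. The natural candidate is the one-sided directional derivative $\Phi(v) := \lim_{t \todown 0} (h(P_0 + tv) - h(P_0))/t$; along directions $v \in \mathcal D$ the limit exists and is convex (in $v$) as a consequence of $\mathcal D$-convexity of $h$. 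Using $\spn \mathcal D = \R^N$, every $v \in \R^N$ decomposes as $v = \sum_k v_k$ with $v_k \in \mathcal D$, and iterated $\mathcal D$-subadditivity controls $\Phi$ on all of $\R^N$.

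The main obstacle is to promote $\Phi$ from merely \emph{sublinear} (which comes for free from $\mathcal D$-subadditivity and $1$-homogeneity) to genuinely \emph{linear}. This is the technical heart of the Kirchheim--Kristensen result: the fact that $P_0$ itself lies in $\mathcal D$ forces $h$ to be affine along the line $\R P_0$ (by $\mathcal D$-convexity plus $1$-homogeneity), giving an anchoring direction; and then a careful argument, combining all the $\mathcal D$-convexity inequalities arising from the many different $\mathcal D$-decompositions of a vector, can be shown to imply $\Phi(v) + \Phi(-v) = 0$ on $\R^N$. Together with sublinearity this anti-symmetry forces linearity, supplying the required $L$ and closing the argument.
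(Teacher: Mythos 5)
This theorem is quoted verbatim as Theorem~1.1 of Kirchheim--Kristensen \cite{KirchheimKristensen16}; the paper you are reading does not contain a proof, so there is no paper-internal argument to compare against. Judged on its own terms, your proposal correctly identifies the cleanest reduction: since $\Phi\le h$ everywhere (by $\mathcal D$-subadditivity, taking the splitting $sP_0+v$ with $sP_0\in\mathcal D$) and $\Phi(P_0)=h(P_0)$, linearity of $\Phi$ would immediately furnish the supporting functional. The $\mathcal D$-subadditivity lemma, the Lipschitz observation, and the Carath\'eodory reduction are all correct.

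However, the key step --- that $\Phi$ is linear --- is not proved, and the partial justifications offered are themselves flawed. The claim that ``$P_0\in\mathcal D$ forces $h$ to be affine along the line $\R P_0$'' is false: take $h(x)=|x|$ on $\R$ with $\mathcal D=\{\pm1\}$ and $P_0=1$; $h$ is $\mathcal D$-convex and $1$-homogeneous but certainly not affine along $\R P_0$. The statement that sublinearity of $\Phi$ ``comes for free from $\mathcal D$-subadditivity and $1$-homogeneity'' is also unjustified: $\mathcal D$-subadditivity gives $\Phi(u+v)\le\Phi(u)+\Phi(v)$ only when one of the increments lies in $\mathcal D$, and the decomposition $sP_0+(u+v)=(\tfrac{s}{2}P_0+u)+(\tfrac{s}{2}P_0+v)$ does not help because neither summand need lie in $\mathcal D$. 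Consequently the intended conclusion ``antisymmetry $+$ sublinearity $\Rightarrow$ linearity'' is not available even if one could prove $\Phi(v)+\Phi(-v)=0$ --- and that antisymmetry identity is precisely the crux of the Kirchheim--Kristensen theorem, for which no argument is given beyond the assertion that ``a careful argument\ldots can be shown to imply'' it. The actual proof in \cite{KirchheimKristensen16} is substantially more intricate and does not follow this route; your proposal should be regarded as a heuristic reduction, not a proof.
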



Condition~\eqref{eq:span} holds in several applications, for example in the space of gradients ($\A = \curl$) or the space of divergence-free fields ($\A = \Div$). However, it does not  necessarily hold in our framework as it is evidenced by the operator
\[
\A := A_0 \Delta = \sum_{i=1}^d A_0 \partial_{ii}, 
\]
where $A_0\in \R^{n}\otimes \R^N$  with  $\ker A_0 \neq \R^N$.    

Nevertheless, for our purposes 
it will be sufficient to use the convexity of $f^\#|_{V_{\A}}(x,\frarg)$ in $\Lambda_{\A}$,
which is a direct consequence of Theorem~\ref{thm:KK}. 

\begin{remark}[automatic convexity]\label{rem:qc-convexo}
%
Summing up, in the following we will often make use of the implications from Lemma~\ref{lem:qce_property}, Corollary~\ref{cor:coneconvexity} and Theorem~\ref{thm:KK}: If $f \colon \Omega \times \R^N \to \R$ is an integrand with linear growth at infinity, then
\[
\text{$f(x,\frarg)$ is $\A^k$-quasiconvex} \quad \Longrightarrow \quad 
\begin{cases}
	\quad\text{$f(x,\frarg)$ is $\Lambda_{\A}$-convex in $\R^N$ and}\\[7pt]
\;\text{$f^\#|_{V_{\A}}(x,\frarg)$
 is convex at points in $\Lambda_{\A}$}
	\end{cases},
\]
\[
\text{$f$ upper semicontinuous} \quad \Longrightarrow \quad \begin{cases}
	\;\quad \text{$Q_{\A^k}f(x,\frarg)$ is $\Lambda_{\A}$-convex in $\R^N$ and}\\[7pt]
 \;\, \text{$(Q_{\A^k}f)^\#|_{V_{\A}}(x,\frarg)$
 is convex at points in $\Lambda_{\A}$}
	\end{cases}.
\]
\end{remark}

 \subsection{Localization principles for Young measures}
 
 We state two general localization principles for Young measures, one at {\it regular} points and another one at {\it singular} points. These are $\A$-free versions of the localization principles developed for gradient Young measures and $\BD$-Young measures in~\cite{Rindler11,Rindler12BV}.
 
 
\begin{definition}[$\A$-free Young measure] We say that a Young measure $\nu \in \Y(\Omega;\R^N)$ is an $\A$-free Young measure in $\Omega$, in symbols $\nu \in \Y_{\A}(\Omega;\R^N)$, if and only if there exists a sequence $(\mu_j) \subset \M(\Omega;\R^N)$ with $\A \mu_j \to 0$ in $\Wrm^{-k,q}$ for some $1 < q <  d/(d-1)$, and such that $\mu_j \toY \nu$ in $\Y(\Omega;\R^N)$.
\end{definition}

   \begin{proposition}\label{prop: localization regular}
   	Let $\nu \in \Y_{\A}(\Omega;\R^N)$ be an $\A$-free Young measure. Then for $\Lcal^d$-a.e.\ $x_0 \in \Omega$ there exists a {regular tangent $\A^k$-free Young measure} $\sigma \in \Y_{\A^k}(Q;\R^N)$ to $\nu$ at $x_0$, that is, $\sigma$ is generated by a sequence of asymptotically $\A^k$-free measures and
	\begin{align*}
   	[\sigma] \in \Tan_Q([\nu],x_0), \quad &  \quad \sigma_y = \nu_{x_0} \; \text{a.e.}, \\
   	\lambda_\sigma = \frac{\di \lambda_\nu}{\di \Lcal^d}(x_0) \, \Lcal^d\in \Tan_Q(\lambda_\nu,x_0), \quad & \quad \sigma_y^\infty = \nu^\infty_{x_0} \; \text{$\lambda_\sigma$-a.e.}
   	\end{align*}
	Moreover, there exists a sequence $(w_j) \subset \Crm^\infty_\per(Q;\R^N) \cap \ker \A^k$ such that $
	 w_j \Lcal^d \toY \sigma$ in $\Y(Q;\R^N)$.
   \end{proposition}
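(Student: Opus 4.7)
The plan is to construct $\sigma$ by blowing up any generating sequence $(\mu_j) \subset \Mcal(\Omega;\R^N)$ for $\nu$ with $\A \mu_j \to 0$ in $\Wrm^{-k,q}(\Omega;\R^n)$ at a carefully chosen point $x_0$, and then to promote the blow-up sequence to a smooth periodic $\A^k$-free one by invoking Lemma~\ref{lem:boundary values}. First I would select a set $E \subset \Omega$ of full $\Lcal^d$-measure at each of whose points the following hold simultaneously: (i) $x_0$ is a regular point of both $[\nu]$ and $\lambda_\nu$, so that by~\eqref{eq:ae tangent} one has $r^{-d}T^{(x_0,r)}_\# [\nu] \toweakstar \tfrac{\di [\nu]}{\di \Lcal^d}(x_0)\,\Lcal^d$ and $r^{-d}T^{(x_0,r)}_\# \lambda_\nu \toweakstar \tfrac{\di \lambda_\nu}{\di \Lcal^d}(x_0)\,\Lcal^d$ in $\Mcal_{\loc}(\R^d)$; (ii) $x_0$ is a joint Lebesgue point of $x \mapsto \dpr{g_h,\nu_x}$ relative to $\Lcal^d$ and of $x \mapsto \dpr{g_h^\infty,\nu_x^\infty}$ relative to $\lambda_\nu$ for every member of the countable separating family $\{\varphi_h \otimes g_h\}$ provided by Lemma~\ref{lem:separation}; (iii) the singular part of $\lambda_\nu$ has zero density at $x_0$. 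All three properties hold outside an $\Lcal^d$-negligible set by classical differentiation theory.

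Next, fix $x_0 \in E$ and define the rescaled measures $\mu_j^r \coloneqq r^{-d} T^{(x_0,r)}_\# \mu_j$, which for $r$ sufficiently small are defined on a neighborhood of $\cl Q$. The scaling relation $T^r_\ast \A\, \mu_j^r = r^{-d} T^{(x_0,r)}_\#(\A \mu_j)$, combined with $\A\mu_j \to 0$ in $\Wrm^{-k,q}$ and the compact embedding $\Mcal \cembed \Wrm^{-1,q}_{\loc}$, shows exactly as in the proof of Proposition~\ref{lem:nonh} that $\A^k \mu_j^r \to 0$ in $\Wrm^{-k,q}_{\loc}(\R^d;\R^n)$ as $j\to\infty$, up to error terms of order $r$ coming from the lower-order parts of $\A$. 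On the Young-measure side, for each fixed small $r>0$ the sequence $(\mu_j^r)_j$ is bounded in total variation on $\cl Q$, hence by Lemma~\ref{lem:YM_compact} it generates, along a subsequence, a Young measure $\sigma^{(r)}\in \Y(Q;\R^N)$. Property~(ii) implies that, tested against the separating family, $\sigma^{(r)} \toweakstarY \sigma$ as $r \todown 0$, where $\sigma$ is the homogeneous Young measure determined by
\[
\sigma_y \equiv \nu_{x_0}, \qquad \lambda_\sigma = \tfrac{\di\lambda_\nu}{\di\Lcal^d}(x_0)\,\Lcal^d\restrict Q, \qquad \sigma_y^\infty \equiv \nu_{x_0}^\infty.
\]
A diagonal argument using the metrizability of the weak* topologies on norm-bounded sets then produces $r_m \todown 0$ and $j(m) \to \infty$ such that $\mu_{j(m)}^{r_m} \toY \sigma$ in $\Y(Q;\R^N)$ and $\A^k \mu_{j(m)}^{r_m} \to 0$ in $\Wrm^{-k,q}_{\loc}$. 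The barycenter $[\sigma] = \tfrac{\di [\nu]}{\di \Lcal^d}(x_0)\,\Lcal^d\restrict Q$ belongs to $\Tan_Q([\nu],x_0)$ by~(i).

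To obtain the smooth periodic generating sequence, first mollify $\mu_{j(m)}^{r_m}$ at a sufficiently small scale $\eps_m\todown 0$ to produce $(\tilde u_m)\subset \Crm^\infty(Q;\R^N)\cap \Lrm^1$ still generating $\sigma$ and with $\A^k \tilde u_m \to 0$ in $\Wrm^{-k,q}$; this uses Remark~\ref{rem:area} and the fact that $\A^k$ commutes with convolution. Then apply Lemma~\ref{lem:boundary values} with $\A$ replaced by its homogeneous principal part $\A^k$, with $u_j = \tilde u_m$ and with $v_j \equiv v \coloneqq \dpr{\id,\nu_{x_0}}+\dpr{\id,\nu_{x_0}^\infty}\tfrac{\di \lambda_\nu}{\di \Lcal^d}(x_0)$. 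Note that $\tilde u_m - v \toweakstar 0$ since $[\sigma]=v\,\Lcal^d\restrict Q$, and the constant $v$ is trivially $\A^k$-free since $\A^k$ is homogeneous of positive order; by selecting $r_m$ outside a countable exceptional set one further ensures $|\tilde u_m|+|v|\toweakstar\Lambda$ with $\Lambda(\partial Q)=0$. The lemma then yields $(z_m)\subset \Crm^\infty_\per(Q;\R^N)\cap \ker \A^k$ with $\int_Q z_m=0$ and $z_m \toweakstar 0$ such that $w_m \coloneqq v + z_m$ generates $\sigma$, completing the proof.

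The main obstacle is the blow-up step in the second paragraph: one must simultaneously match both the oscillation family $(\sigma_y)$ and the concentration family $(\sigma_y^\infty)$ of the limit Young measure, which are controlled by different reference measures ($\Lcal^d$ and $\lambda_\nu$) and therefore by different Lebesgue-point conditions at $x_0$. The interplay between the separating countable family from Lemma~\ref{lem:separation} and the regular-point characterization~\eqref{eq:ae tangent} for the scalar measure $\lambda_\nu$ is precisely what makes the joint diagonalization (in both the index $j$ and the scale $r$) rigorous; a secondary technical issue is guaranteeing the boundary condition $\Lambda(\partial Q)=0$ needed to apply Lemma~\ref{lem:boundary values}, which is arranged by choosing the scales $r_m$ outside a countable set of bad radii.
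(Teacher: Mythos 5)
Your proposal is correct and follows essentially the same route as the paper's own proof: blow up a generating sequence at an $\Lcal^d$-generic point chosen via the countable separating family from Lemma~\ref{lem:separation} and Lebesgue/regular-point conditions, identify the homogeneous limit $\sigma$, diagonalize in $(j,r)$, show asymptotic $\A^k$-freeness by the scaling argument of Proposition~\ref{lem:nonh} augmented by the rescaled error $r_m^{k-d}T^{(x_0,r_m)}_\#(\A\mu_{j(m)})$, then mollify and apply Lemma~\ref{lem:boundary values} with the constant competitor $v\equiv A_0$ and radii chosen so that $\Lambda(\partial Q)=0$. The only small slip is in your condition (ii): taking $\lambda_\nu$-Lebesgue points of $x\mapsto\dpr{g_h^\infty,\nu_x^\infty}$ gives a set of full $\lambda_\nu$-measure, not a priori of full $\Lcal^d$-measure; the paper instead takes $\Lcal^d$-Lebesgue points of the single combined function $x\mapsto\dprb{h_l,\nu_x}+\dprb{h_l^\infty,\nu_x^\infty}\tfrac{\dd\lambda_\nu}{\dd\Lcal^d}(x)$, which avoids this issue (your version can be patched by noting that on the $\lambda_\nu$-null exceptional set one necessarily has $\tfrac{\dd\lambda_\nu}{\dd\Lcal^d}=0$ $\Lcal^d$-a.e., so the $\sigma^\infty$-component becomes vacuous there).
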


 \begin{proposition}\label{prop: localization}
 	Let $\nu \in \Y_{\A}(\Omega;\R^N)$ be an $\A$-free Young measure. Then there exists a set 
 	$S \subset \Omega$ with $\lambda^s_\nu(\Omega \setminus S) = 0$ such that for all $x_0 \in S$
 	there exists a non-zero singular tangent $\A^k$-free Young measure $\sigma \in \Y_{\A^k}(Q;\R^N)$ to $\nu$ at $x_0$, that is, $\sigma$ is generated by a sequence of asymptotically $\A^k$-free measures and
 	\begin{align*}
 	[\sigma] \in 
	\Tan_Q([\nu],x_0), \quad &  \quad \sigma_y = \delta_0 \; \text{a.e.}, \\
 	\lambda_\sigma \in \Tan_Q(\lambda_\nu^s,x_0),\qquad \lambda_\sigma(Q) = 1, \quad &
 	 \lambda_\sigma(\partial Q) = 0, \quad \sigma_y^\infty = \nu^\infty_{x_0} \; \text{$\lambda_\sigma$-a.e.}
 	\end{align*}
 \end{proposition}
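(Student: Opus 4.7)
\medskip
The plan is to construct $\sigma$ as a blow-up limit of rescaled Young measures around singular points of $\lambda_\nu$, mirroring the singular localization principles developed for $\BV$- and $\BD$-Young measures in \cite{Rindler11, Rindler12BV}, but using Proposition~\ref{lem:nonh} to convert the $(T^r_\ast \A)$-freeness of the rescaled generating sequence into asymptotic $\A^k$-freeness.

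\medskip
First, fix a generating sequence $(\mu_j) \subset \Mcal(\Omega;\R^N)$ with $\mu_j \toY \nu$ and $\A\mu_j \to 0$ in $\Wrm^{-k,q}$. Let $S \subset \Omega$ be the set of $x_0 \in \supp \lambda_\nu^s$ at which the following standard properties of tangent measures and Besicovitch differentiation simultaneously hold: (a) $\frac{\di \lambda_\nu^s}{\di \Lcal^d}(x_0) = \infty$, so that $\Lcal^d$ becomes negligible in the blow-up limit; (b) $y \mapsto \nu_y^\infty$ admits $x_0$ as a $\lambda_\nu^s$-Lebesgue point (in the weak* sense against the separating family of Lemma~\ref{lem:separation}); (c) the barycenter measure $[\nu]$ is $\lambda_\nu^s$-differentiable at $x_0$ in the sense of~\eqref{eq:both}. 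These conditions are satisfied $\lambda_\nu^s$-a.e. For such $x_0$, define the rescaled Young measures on $Q$ by
\[
(\nu^{(x_0,r)})_y := \nu_{x_0 + ry}, \qquad
\lambda_{\nu^{(x_0,r)}} := c_r\, T^{(x_0,r)}_\# \lambda_\nu^s \restrict \cl Q, \qquad
(\nu^{(x_0,r)})_y^\infty := \nu_{x_0+ry}^\infty,
\]
with the normalization $c_r := 1/\lambda_\nu^s(x_0 + r\cl Q)$. Standard tangent measure theory lets us select a sequence $r_m \todown 0$ avoiding the at most countable set of radii at which $\lambda_\nu^s$ charges concentric cube-boundaries, so that the limit $\lambda_\sigma := \text{w*-}\lim c_{r_m}\, T^{(x_0,r_m)}_\# \lambda_\nu^s \restrict \cl Q$ satisfies $\lambda_\sigma(Q) = 1$ and $\lambda_\sigma(\partial Q) = 0$.

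\medskip
Applying Lemma~\ref{lem:YM_compact} to $(\nu^{(x_0,r_m)})$ (which are uniformly bounded since $\lambda_{\nu^{(x_0,r_m)}}(\cl Q) = 1$ and the $\Lrm^1$-norm of $y \mapsto \dpr{|\frarg|,\nu^{(x_0,r_m)}_y}$ tends to zero by (a)), I extract a subsequential weak* limit $\sigma \in \Y(Q;\R^N)$. The property $[\sigma] \in \Tan_Q([\nu],x_0)$ is immediate from the definition; $\sigma_y = \delta_0$ for $\Lcal^d$-a.e. $y \in Q$ follows from (a) because the absolutely continuous part of $[\nu^{(x_0,r_m)}]$ vanishes in mass; and $\sigma_y^\infty = \nu_{x_0}^\infty$ for $\lambda_\sigma$-a.e. $y$ follows from the Lebesgue-point property (b) by testing against the separating family and using Theorem~\ref{thm:Reshet}-type continuity of the duality pairing.

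\medskip
It remains to produce an asymptotically $\A^k$-free generating sequence. For each $m$, the rescaled measure $c_{r_m} T^{(x_0,r_m)}_\# \mu_j$ is $(T^{r_m}_\ast \A)$-free for every $j$, and by definition $c_{r_m} T^{(x_0,r_m)}_\# \mu_j \toY \nu^{(x_0,r_m)}$ as $j \to \infty$. Using the metrizability of weak* convergence in $\Y(Q;\R^N)$ restricted to bounded subsets (via the countable family of Lemma~\ref{lem:separation}), I diagonalize: choose $j(m) \to \infty$ fast enough that $\eta_m := c_{r_m} T^{(x_0,r_m)}_\# \mu_{j(m)} \toY \sigma$. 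Proposition~\ref{lem:nonh} then yields $\A^k \eta_m \to 0$ in $\Wrm^{-k,q}_\loc(\R^d;\R^n)$, so after restricting to $Q$ we obtain $\sigma \in \Y_{\A^k}(Q;\R^N)$. The main obstacle is the simultaneous bookkeeping in this triple diagonalization --- matching the weak* convergence of the rescaled Young measures, the asymptotic $\A^k$-freeness, and the three pointwise conditions defining $S$ --- but each of the ingredients (tangent measure selection with controlled boundary mass, Lebesgue-differentiation of the parameterized families, and Proposition~\ref{lem:nonh}) is available, so the construction goes through.
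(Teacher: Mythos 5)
Your overall plan matches the paper's proof in the appendix: blow up with the normalization $c_r = \lambda_\nu^s(Q_r(x_0))^{-1}$, extract a Young-measure limit, diagonalize in $j(m)$, and invoke the scaling argument of Proposition~\ref{lem:nonh} to make the generating sequence asymptotically $\A^k$-free. However, there are two genuine gaps.

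First, the definition of the rescaled Young measure is incorrect. You set $(\nu^{(x_0,r)})_y := \nu_{x_0+ry}$, but the Young measure generated by $c_r T^{(x_0,r)}_\# \mu_j$ (as $j\to\infty$) is \emph{not} obtained by simply translating the base point. The change of variables carried out in the paper's Step~2 (equation~\eqref{notag}) shows that the oscillation component picks up the dilation factor $c_r r^d$: the correct blow-up has, in effect, $(\sigma^{(r)})_y = (c_r r^d\,\cdot)_\# \nu_{x_0+ry}$, where $(c_r r^d\,\cdot)_\#$ denotes the pushforward under $A \mapsto c_r r^d A$. Since $c_r r^d = r^d/\lambda_\nu^s(Q_r(x_0)) \to 0$ at $\lambda_\nu^s$-a.e.\ $x_0$, it is precisely this factor that forces $\sigma_y = \delta_0$. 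With your definition the $\Lrm^1$-norm of $y \mapsto \dpr{|\frarg|,\nu^{(x_0,r_m)}_y}$ does \emph{not} tend to zero — at Lebesgue points it tends to $\dpr{|\frarg|,\nu_{x_0}}$, which is generically positive — so the asserted conclusion $\sigma_y = \delta_0$ does not follow from your condition (a) as stated.

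Second, selecting $r_m$ to avoid the countable set of radii where $\lambda_\nu^s$ charges $\partial Q_{r_m}(x_0)$ does not give $\lambda_\sigma(\partial Q)=0$: upper semicontinuity of weak* convergence on compacts only yields $\lambda_\sigma(\cl Q) \geq 1$ and $\lambda_\sigma(Q) \leq 1$, and mass may well concentrate on $\partial Q$ in the limit even if every $\lambda_\nu^s \restrict \partial Q_{r_m}(x_0)$ vanishes. The paper resolves this \emph{a posteriori}: once $\lambda_\sigma$ is obtained, one picks $0 < \eps < 1$ with $\lambda_\sigma(\partial Q_\eps)=0$ (possible since this can fail for at most countably many $\eps$) and reruns the blow-up with $r_m' := \eps r_m$ — a renormalization that is not available before $\lambda_\sigma$ has been identified. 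Your argument would need this additional step.
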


The proofs for the first part of the statements above are by now standard (see, for instance,~\cite{Rindler12BV}). The existence of an $\A^k$-free generating sequence in Proposition~\ref{prop: localization regular} is obtained by Lemma~\ref{lem:boundary values}. For the sake of readability, the proofs are postponed to the appendix.

\section{Jensen's inequalities}  \label{sc:Jensen}

%

In this section we establish generalized Jensen inequalities, which can be understood as a local manifestation of lower semicontinuity.  The proof of Theorem~\ref{thm:lsc}, under Assumption~(i), which reads
\begin{equation*}
f^\infty(x,A) \coloneqq \lim_{t \to \infty} \; \frac{f(x,tA)}{t}\quad \text{ exists for all \; $(x,A) \in \Omega \times \R^N$},
\end{equation*}
will easily follow from Propositions~\ref{prop: jensen regular} and~\ref{prop: jensen singular}.

On the other hand, to prove the Theorem~\ref{thm:lsc} under the weaker Assumption~(ii), 
\[
f^\infty(x,A) \coloneqq \lim_{t \to \infty} \; \frac{f(x,tA)}{t}\quad \text{ exists for all \; $(x,A) \in \Omega \times \mathrm{span}\, \Lambda_\A$},
\]
requires to perform a direct blow-up argument for what concerns the regular part of \(\mu\) and only Proposition~\ref{prop: jensen singular} is used in the proof. 

\subsection{Jensen's inequality at regular points}


We first consider regular points.

\begin{proposition}
	\label{prop: jensen regular} Let $\nu \in \Y_{\A}(\Omega;\R^N)$ be an $\A$-free Young measure. Then, for $\Lcal^d$-almost every $x_0 \in \Omega$ it holds that 
	\[
	h\left(\dprb{ \id, \nu_{x_0}} + \dprb{ \id , \nu^\infty_{x_0}} \frac{\di \lambda_\nu}{\di \Lcal^d}(x_0) \right) \leq 
	\dprb{ h ,\nu_{x_0} } + \dprb{ h^\# , \nu_{x_0}^\infty } \frac{\di \lambda_\nu}{\di \Lcal^d}(x_0),
	\] 
	for all upper semicontinuous and $\A^k$-quasiconvex $h \colon \R^N \to [0,\infty)$ with linear growth at infinity.  
\end{proposition}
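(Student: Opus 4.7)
The strategy is to reduce the inequality to a single application of $\A^k$-quasiconvexity on the unit cube for a suitable generating sequence of the tangent Young measure at $x_0$, and then to close up by passing to the limit in the Young measure framework using Corollary~\ref{cor:representation} for upper semicontinuous integrands.

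First, I would fix $x_0 \in \Omega$ satisfying the conclusions of Proposition~\ref{prop: localization regular}. Let $\sigma \in \Y_{\A^k}(Q;\R^N)$ denote the corresponding regular tangent $\A^k$-free Young measure, which is in fact homogeneous in $y$: $\sigma_y \equiv \nu_{x_0}$, $\lambda_\sigma = \lambda_0 \, \Lcal^d$ with $\lambda_0 := \di\lambda_\nu/\di\Lcal^d(x_0)$, and $\sigma^\infty_y \equiv \nu^\infty_{x_0}$ $\lambda_\sigma$-a.e. Most importantly, by the second part of that proposition, there exists a generating sequence $(w_j) \subset \Crm^\infty_\per(Q;\R^N) \cap \ker \A^k$ with $w_j \, \Lcal^d \toY \sigma$. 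Testing the Young measure convergence against $\id \in \E(Q;\R^N)$ yields $\bar w_j := \int_Q w_j \dd y \to A_0$, where
\[
A_0 := \dprb{\id,\nu_{x_0}} + \dprb{\id,\nu^\infty_{x_0}} \, \lambda_0.
\]

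Second, I would apply the $\A^k$-quasiconvexity of $h$ at the point $A_0$ using the test function $w_j - \bar w_j$, which is a smooth $Q$-periodic, $\A^k$-free function of zero mean. Setting $v_j := A_0 + (w_j - \bar w_j) = w_j + c_j$ with $c_j := A_0 - \bar w_j \to 0$ in $\R^N$, quasiconvexity gives
\[
h(A_0) \leq \int_Q h(v_j) \dd y \qquad \text{for every } j.
\]

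Third, I would establish that the shifted sequence still generates $\sigma$, i.e., $v_j \, \Lcal^d \toY \sigma$ in $\Y(Q;\R^N)$. For integrands $\phi \in \E(Q;\R^N)$ this follows from a shift-stability argument: a cutoff splits $\phi$ into a compactly supported piece, handled by uniform continuity, and a tail piece, controlled via the fact that shifting the arguments by a vanishing constant $c_j$ preserves both the concentration amount $\lambda_\sigma$ and the unit-sphere direction $\sigma^\infty_y = (v_j/|v_j|)$-limit, while perturbing the oscillation measure $\sigma_y$ only continuously. Granted this, since $h$ is upper semicontinuous with linear growth, Corollary~\ref{cor:representation} applies to the sequence $(v_j)$ and gives
\[
\limsup_{j \to \infty} \int_Q h(v_j) \dd y \leq \int_Q \dprb{h, \sigma_y} \dd y + \int_Q \dprb{h^\#, \sigma^\infty_y} \dd \lambda_\sigma = \dprb{h,\nu_{x_0}} + \dprb{h^\#, \nu^\infty_{x_0}} \, \lambda_0.
\]
Combining this with the quasiconvexity estimate above yields the Jensen inequality.

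The main obstacle is verifying the shift-stability $v_j \, \Lcal^d \toY \sigma$ in the presence of concentration: the generalized Young measure framework involves a singular component $\lambda_\sigma$ coming from non-equi-integrable mass, so the passage $\int \phi(w_j + c_j) - \int \phi(w_j) \to 0$ for $\phi \in \E$ is not immediate and requires splitting the integrand according to its compact support and recession parts, using the continuous extension property of $S\phi$ on the closed unit ball. An alternative would be to apply $\A^k$-quasiconvexity directly at $\bar w_j$ (getting $h(\bar w_j) \leq \int_Q h(w_j) \dd y$) and then exploit the continuity of $\A^k$-quasiconvex integrands of linear growth to conclude $h(A_0) = \lim h(\bar w_j)$; this is cleaner when such continuity is available, but requires care when $\spn \Lambda_\A \neq \R^N$.
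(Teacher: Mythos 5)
Your proof is correct and follows essentially the same route as the paper: localize at a regular point via Proposition~\ref{prop: localization regular}, obtain a smooth periodic $\A^k$-free generating sequence for the tangent Young measure, apply $\A^k$-quasiconvexity on $Q$, and pass to the limit using the upper-semicontinuity statement encoded in Corollary~\ref{cor:representation} (the paper instead unpacks this corollary by hand, via Lemma~\ref{lem:E_approx} and monotone convergence, which is the same mechanism). The one genuine extra step in your write-up is the shift-stability argument, needed because Proposition~\ref{prop: localization regular} as stated only gives a generating sequence $(w_j)$, not one already normalized to mean $A_0$; the paper's own proof of that proposition actually produces the sequence directly in the form $A_0 + z_j$ with $\int_Q z_j = 0$ (via Lemma~\ref{lem:boundary values} applied with $v_j \equiv A_0$), which is why the paper can skip this step. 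Your shift argument is valid but described more elaborately than necessary: by Lemma~\ref{lem:separation} it suffices to test convergence of generalized Young measures against integrands of the form $\phi\otimes g$ with $g$ Lipschitz, for which $|\int_Q\phi\, g(w_j+c_j) - \int_Q\phi\, g(w_j)| \le \|\phi\|_\infty\,\mathrm{Lip}(g)\,|c_j| \to 0$ immediately. Your caution about the alternative argument (using continuity of $h$ via $\Lambda_\A$-convexity) is well placed; upper semicontinuity plus $\Lambda_\A$-convexity does not give full continuity of $h$ on $\R^N$ when $\spn\Lambda_\A \ne \R^N$, so the shift route you chose is the safer one.
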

\begin{proof}
	We make use of Lemma~\ref{lem:E_approx} to get a collection $\{h_m\} \subset \e(\Omega;\R^N)$ such that 
$h_m \todown h$, $h_m^\infty \todown h^\#$ pointwise in $\Omega$ and $\cl{\Omega}$ respectively, all $h_m$ are Lipschitz continuous and have uniformly bounded  linear growth constants. 
Fix $x_0 \in \Omega$ such that there exists a regular tangent measure $\sigma \in \Y_{\A^k}(Q;\R^N)$ of $\nu$ at $x_0$ as in Proposition
\ref{prop: localization regular}, which is possible for $\Lcal^d$-a.e.\ $x_0 \in \Omega$.
The  localization principle for regular points tells us that $[\sigma] = A_0 \Lcal^d$ with
\[
A_0 := \dprb{ \id, \nu_{x_0} } + \dprb{ \id, \nu_{x_0}^\infty } \frac{\di \lambda_\nu}{\di \Lcal^d}(x_0)  \quad \in \R^N,
\]
and that we may find a sequence $z_j \in \Crm^\infty_\per(Q;\R^N) \cap \ker \A^k$ with 
$\int_{Q} z_j \dd y = 0$ and satisfying
\begin{equation}\label{eq: promedio}
(A_0 + z_j)\Lcal^d \toY \sigma \quad \text{in $\Y(Q;\R^N)$}.
\end{equation}

Fix $m \in \N$. We use the fact that 
$\int_{Q} z_j \dd y = 0$,~\eqref{eq: promedio} and the $\A^k$-quasiconvexity of $h$, to get for every $m \in \N$ that
\begin{align*}
\dprb{ h_m , \nu_{x_0}}  + \dprb{ h_m^\infty , \nu^\infty_{x_0} } \frac{\di \lambda_\nu}{\di \Lcal^d}(x_0)& = \frac{1}{|Q_r|} \ddprb{ \mathds 1_{Q} \otimes h_m, \sigma } \\
& = \lim_{j \to \infty} \dashint_{Q} h_m(A_0 + z_j(y)) \dd y \\
& \ge \limsup_{j \to \infty} \dashint_{Q} h(A_0 + z_j(y)) \dd y \\
& \ge h(A_0).
\end{align*}
The result follows by letting $m \to \infty$ in the previous inequality and using the 
monotone convergence theorem.
\end{proof}

\subsection{Jensen's inequality at singular points} The strategy for {\it singular points} differs from the  regular case as one cannot simply use the definition of $\A^k$-quasiconvexity. The latter difficulty arises because tangent measures at a singular point may not be multiples of the $d$-dimensional Lebesgue measure. 

In order to circumvent this obstacle, we will first show that, for $\Acal$-free Young measures, the support of the singular part $\nu^\infty$ at singular points is contained in the subspace $V_\A$ of $\R^N$ (see Lemma~\ref{lem:subspace} below). Based on this, we invoke Theorem~\ref{thm:KK}, which states that an $\A^k$-quasiconvex and positively {$1$-homogeneous} function is actually convex at points in $\Lambda_\A$ when restricted to $V_{\A}$. Then, the Jensen inequality for $\A$-free Young measures at singular points follows. 

\begin{lemma}\label{lem:subspace}
Let $\sigma \in \Y_{\A^k}(Q;\R^N)$ be an $\A^k$-free Young measure with ${\lambda_\sigma(\partial Q) = 0}$.
Assume also that 
\[
[\sigma] \in\M(Q;V_{\A}).
\]
Then,
     \[
   \supp \sigma_y^\infty \subset V_\A \cap \Sbb^{N-1} \qquad \text{for $\lambda_\sigma$-a.e.\ $y \in Q$}.\]
\end{lemma}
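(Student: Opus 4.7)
The plan is to produce a new generating sequence for $\sigma$ whose elements take values pointwise in the subspace $V_\A$, and then read off the support condition on $\sigma^\infty$ directly from Lemma~\ref{thm:alibert version} applied with $V=V_\A$.

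First I would take a generating sequence $(\mu_j) \subset \Mcal(Q;\R^N)$ for $\sigma$ with $\A^k \mu_j \to 0$ in $\Wrm^{-k,q}(Q;\R^n)$. Using Remark~\ref{rem:area} (area-strict approximation by mollification) and a diagonal argument via the separation Lemma~\ref{lem:separation}, I may assume without loss of generality that $\mu_j = u_j \, \Lcal^d$ with $u_j \in \Crm^\infty(Q;\R^N) \cap \Lrm^1(Q;\R^N)$. Passing to the distributional limit in $\A^k u_j \to 0$ and $u_j \Lcal^d \toweakstar [\sigma]$ yields $\A^k [\sigma] = 0$ in $\Dcal'(Q)$. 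Next, set $v_j := \rho_{\eps_j} \ast [\sigma]$ for $\eps_j \todown 0$ (extending $[\sigma]$ by zero outside $Q$ and restricting back). Because $[\sigma] \in \Mcal(Q;V_\A)$ and the linear subspace $V_\A$ is preserved by convolution with a scalar kernel, $v_j$ takes values in $V_\A$. Moreover, $v_j \, \Lcal^d \toweakstar [\sigma]$ in $\Mcal(Q;\R^N)$, and $\A^k v_j = \rho_{\eps_j}\ast \A^k[\sigma]$ vanishes on the set where the mollification does not reach $\partial Q$, so $\A^k v_j \to 0$ in $\Wrm^{-k,q}(Q;\R^n)$.

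Now I would apply Lemma~\ref{lem:boundary values} with the homogeneous operator $\A^k$ to the pair $(u_j),(v_j)$: both converge weakly* to $[\sigma]$, so $u_j - v_j \toweakstar 0$, and $\A^k(u_j-v_j) \to 0$ in $\Wrm^{-k,q}$. Exploiting $\lambda_\sigma(\partial Q) = 0$ (together with standard properties of mollifications of finite measures on $\cl{Q}$) one checks that $|u_j| + |v_j| \toweakstar \Lambda$ for some $\Lambda \in \M^+(\cl{Q})$ with $\Lambda(\partial Q) = 0$, possibly after passing to a subsequence. The lemma then furnishes a sequence $(z_j) \subset \Crm^\infty_\per(Q;\R^N) \cap \ker \A^k$ with $\int_Q z_j \dd y = 0$ and $z_j \toweakstar 0$ such that $(v_j + z_j) \toY \sigma$ in $\Y(Q;\R^N)$. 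By the Fourier-series computation recalled in \eqref{eq: subspace} (any $Q$-periodic $\Lrm^1_{\loc}$ field in $\ker \A^k$ with zero mean takes values in $V_\A$ almost everywhere), each $z_j$ lies in $V_\A$ a.e.; combined with $v_j(y) \in V_\A$ pointwise, the generating sequence $(v_j + z_j)$ is contained in $\Lrm^1(Q;V_\A)$. Applying Lemma~\ref{thm:alibert version} with $V = V_\A$ yields the desired conclusion $\supp \sigma_y^\infty \subset V_\A \cap \Sbb^{N-1}$ for $\lambda_\sigma$-a.e.\ $y \in Q$.

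The main obstacle I expect is the technical verification of the boundary hypothesis $\Lambda(\partial Q) = 0$ in Lemma~\ref{lem:boundary values}: one must ensure that neither the mollification of $[\sigma]$ near $\partial Q$ nor the concentration behavior of $|u_j|$ creates mass on $\partial Q$ in the limit. This is handled by combining the assumption $\lambda_\sigma(\partial Q) = 0$ with a standard weak* compactness argument and, if needed, by extracting a further subsequence and slightly shrinking $Q$; the remaining items (the Fourier pointwise statement and the application of Lemma~\ref{thm:alibert version}) are then immediate.
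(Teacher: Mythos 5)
Your overall strategy matches the paper's: construct a new generating sequence for $\sigma$ that takes values in $V_\A$ (via Lemma~\ref{lem:boundary values}) and then invoke Lemma~\ref{thm:alibert version}. You also correctly separate the two reasons the final generating sequence lands in $V_\A$ (the $v_j$ via $[\sigma]\in\Mcal(Q;V_\A)$, the $z_j$ via the Fourier observation~\eqref{eq: subspace}), which the paper's wording glosses over. However, there is a genuine gap in your construction of the comparison sequence $(v_j)$.

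You set $v_j := \rho_{\eps_j}\ast\widetilde{[\sigma]}$ where $\widetilde{[\sigma]}$ is the zero extension, and assert that ``$\A^k v_j = \rho_{\eps_j}\ast\A^k[\sigma]$ vanishes away from $\partial Q$, so $\A^k v_j\to 0$ in $\Wrm^{-k,q}(Q;\R^n)$.'' This is not an identity that settles the matter: $\A^k\widetilde{[\sigma]}$ is \emph{not} zero, it is a distribution of order $k$ concentrated on $\partial Q$, and $\rho_{\eps_j}\ast\A^k\widetilde{[\sigma]}$ is a bump of unit mass spread over an $\eps_j$-collar of $\partial Q$. Showing that this collar term tends to zero in $\Wrm^{-k,q}(Q)$ is precisely the non-trivial step; it is plausible (and in fact true for $q<d/(d-1)$ by a Morrey/H\"older estimate, pairing against $\Wrm^{k,q'}_0(Q)$ test functions that vanish on $\partial Q$), but it needs an argument, and you instead flag $\Lambda(\partial Q)=0$ as the only technical concern. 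The paper avoids this issue entirely by first dilating $[\sigma]$ to $T^{(0,r)}_\#[\sigma]$ with $r<1$ so that the measure is supported away from $\partial Q$, and then mollifying at a scale small compared with $1-r$; the resulting $v_j$ are then \emph{exactly} $\A^k$-free on $Q$, so no boundary estimate is needed. You should either adopt the dilate-then-mollify construction or supply the $\Wrm^{-k,q}$ estimate for the boundary contribution.
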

\begin{proof} By definition, we may find a sequence $(\mu_j) \subset \M(Q;\R^N)$ with $\A \mu_j \to 0$ in $\Wrm^{-k,q}(Q)$ for some $q \in (1,d /(d -1))$, and such that $(\mu_j)$ generates the Young measure $\sigma$. Notice that, since $\A^k$ is a homogeneous operator and $Q$ is a strictly star-shaped domain, we may re-scale and mollify  each $\mu_j$ into some $u_j \in \Lrm^2(Q;\R^N)$ with the following property: the  sequence $(u_j)$ also generates $\sigma$ and $\A u_j \to 0$ in $\Wrm^{-k,q}(Q)$. In particular,
	\[
	u_j \Lcal^d \toweakstar [\sigma] \quad \text{in $\Mcal(Q;\R^N)$}.
	\]
	On the other hand, $\A^k([\sigma]) = 0$ and for every $0 < r < 1$ the measure 
	$T^{(0,r)}_\# [\sigma]$ is still an $\A^k$-free measure on $Q$. Thus, letting $r \toup 1$ and mollifying the measure $T^{(0,r)}_\#[\sigma]$ on a sufficiently small scale (with respect to $1 -r$) we might find a sequence $(v_j)\subset \Lrm^2(Q;\R^N) \cap \ker \A^k$ such that
	\[
	v_j\Lcal^d \toweakstar [\sigma]  \quad \text{in $\Mcal(Q;\R^N)$}.
	\] 
	Hence, 
	\[
	u_j \Lcal^d - v_j \Lcal^d \toweakstar 0 \quad \text{in $\Mcal(Q;\R^N)$, \quad $|u_j\Lcal^d| + |v_j\Lcal^d| \toweakstar \Lambda$ in $\M^+(\overline Q)$}
	\]
	and $\Lambda(\partial Q) = 0$. Here, we have used that $\lambda_\sigma(\partial Q) = 0$.
	
	We are now in position to apply Lemma~\ref{lem:boundary values} to the sequences
	$(u_j)$, $(v_j)$. There exists (possibly passing to a subsequence in the $v_j$'s) a sequence  $z_j \in \Crm^\infty_\per(Q;\R^N) \cap \ker \A^k$ with $z_j\Lcal^d \toweakstar 0$ and such that
	\[
	v_j\Lcal^d + z_j \Lcal^d\toY \sigma \quad \text{in $\Mcal(Q;\R^N)$}.
	\]
Recall from observation~\eqref{eq: subspace} that $v_j, z_j \in \Lrm^2(Q;V_{\A})$ for every $j \in \N$. 
Therefore, 
\[
(v_j + z_j) \in  \Lrm^2(Q;V_{\A}) \quad \text{for all $j \in \Nbb$}.
\]
We conclude with an application of Lemma~\ref{thm:alibert version}~(ii) to the sequence $(v_j + z_j)$, which yields
\begin{gather*}
\supp \sigma_y^\infty \subset V_\A \cap \Sbb^{N-1} \qquad \text{for $\lambda_\sigma$-a.e.\ $y \in Q$}.\label{eq: characterization}
\end{gather*}
This finishes the proof.
\end{proof}

\begin{proposition}\label{prop: jensen singular}
	Let $\nu \in \Y_{\A}(\Omega;\R^N)$ be an $\A$-free Young measure. Then for $\lambda^s_\nu$-almost every $x_0 \in \Omega$ it holds that 
	\[
	g\big(\dprb{ \id , \nu^\infty_{x_0}}\big) \leq \dprb{ g, \nu^\infty_{x_0}} 
	\]
	for all $\Lambda_{\A}$-convex and positively $1$-homogeneous functions {$g : \R^N \to \R$}. 
\end{proposition}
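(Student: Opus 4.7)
The plan is to reduce the singular Jensen inequality to \emph{classical convexity at a point}---specifically at the point $\dprb{\id, \nu_{x_0}^\infty} \in \R^N$---applied with $\nu_{x_0}^\infty$ itself as the test probability measure. Theorem~\ref{thm:KK} furnishes this classical convexity, but only at points of $\Lambda_\A$ and only for integrands living on $V_\A$. The two rigidity ingredients I therefore need to establish at $\lambda_\nu^s$-a.e.\ $x_0 \in \Omega$ are: \textbf{(a)} $\dprb{\id, \nu_{x_0}^\infty} \in \Lambda_\A$, and \textbf{(b)} $\supp \nu_{x_0}^\infty \subset V_\A \cap \Sbb^{N-1}$.

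For (a), I would first note that the barycenter $[\nu]$ is itself $\A$-free: indeed, $\nu$ is generated by some $(\mu_j) \subset \M(\Omega;\R^N)$ with $\mu_j \toweakstar [\nu]$ and $\A \mu_j \to 0$ in $\Wrm^{-k,q}$, so $\A [\nu] = 0$ distributionally on $\Omega$. The Radon--Nikod\'ym decomposition identifies the singular part as $[\nu]^s = \dprb{\id, \nu_x^\infty}\,\lambda_\nu^s$, whence Theorem~\ref{thm:guido filip} gives
\[
\frac{\dprb{\id, \nu_x^\infty}}{\absb{\dprb{\id, \nu_x^\infty}}} \in \Lambda_\A  \qquad \text{for $|[\nu]^s|$-a.e.\ $x \in \Omega$.}
\]
Since $|[\nu]^s| = \absb{\dprb{\id, \nu_x^\infty}}\,\lambda_\nu^s$ and $0 \in \Lambda_\A$ (as $\Lambda_\A$ is a union of linear subspaces), this upgrades to $\dprb{\id, \nu_x^\infty} \in \Lambda_\A$ for $\lambda_\nu^s$-a.e.\ $x$.

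For (b), I would apply the singular localization principle (Proposition~\ref{prop: localization}) to produce, at $\lambda_\nu^s$-a.e.\ $x_0$, a tangent $\A^k$-free Young measure $\sigma \in \Y_{\A^k}(Q;\R^N)$ satisfying $\sigma_y = \delta_0$ for $\Lcal^d$-a.e.\ $y$, $\sigma_y^\infty = \nu_{x_0}^\infty$ for $\lambda_\sigma$-a.e.\ $y$, $\lambda_\sigma(Q) = 1$, and $\lambda_\sigma(\partial Q) = 0$. The condition $\sigma_y = \delta_0$ collapses the barycenter to $[\sigma] = \dprb{\id, \nu_{x_0}^\infty}\,\lambda_\sigma$, which by (a) is $V_\A$-valued. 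Lemma~\ref{lem:subspace} then applies and yields $\supp \sigma_y^\infty \subset V_\A \cap \Sbb^{N-1}$ for $\lambda_\sigma$-a.e.\ $y$; intersecting with the $\lambda_\sigma$-full set on which $\sigma_y^\infty = \nu_{x_0}^\infty$ (non-empty since $\lambda_\sigma(Q) = 1$) delivers $\supp \nu_{x_0}^\infty \subset V_\A \cap \Sbb^{N-1}$.

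With (a) and (b) in place the proof closes cleanly: the restriction $g|_{V_\A}$ is $\Lambda_\A$-convex and positively $1$-homogeneous, and $\Lambda_\A$ spans $V_\A$ by definition, so Theorem~\ref{thm:KK} provides classical convexity of $g|_{V_\A}$ at every point of $\Lambda_\A$. Invoking this convexity at the barycenter $\dprb{\id, \nu_{x_0}^\infty} \in \Lambda_\A$ against $\nu_{x_0}^\infty$ produces the desired Jensen inequality. I expect the main obstacle to be ingredient (b): $\supp \nu_{x_0}^\infty$ is a pointwise ``concentration-level'' datum that Theorem~\ref{thm:guido filip} cannot see directly, and bridging this gap is precisely the role of the singular localization procedure, which promotes $\nu_{x_0}^\infty$ to a genuine $\A^k$-free Young measure on $Q$ and thereby unlocks the Fourier-based rigidity of Lemma~\ref{lem:subspace}.
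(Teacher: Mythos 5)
Your proposal is correct and follows essentially the same route as the paper: establish $\dprb{\id,\nu_{x_0}^\infty}\in\Lambda_\A$ via $[\nu]^s=\dprb{\id,\nu_x^\infty}\,\lambda_\nu^s$ and Theorem~\ref{thm:guido filip}, use the singular localization principle together with Lemma~\ref{lem:subspace} to place $\supp\nu_{x_0}^\infty$ inside $V_\A$, and close with the Kirchheim--Kristensen convexity-at-a-point result restricted to $V_\A$. The only cosmetic difference is that the paper packages the barycenter and support conditions as two full-measure sets $S'$ and $S$ and intersects them, while you derive them sequentially, but the underlying argument is identical.
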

\begin{proof}
	\proofstep{Step~1: Characterization of the support of $\A$-free Young measures.}
	Let $S$ be the set given by Proposition~\ref{prop: localization}, which has full $\lambda_\nu^s$-measure. 
	Further, also the set
	\[
	S' := \setb{x \in \Omega }{ \dprb{ \id , \nu^\infty_{x}} \in \Lambda_\A } \subset \Omega
	\]
	has full $\lambda_\nu^s$-measure: Observe first that
	\[
	  [\nu]^s = \dprb{\id, \nu_x^\infty} \, \lambda_\nu^s(\di x).
	\]
	Since $[\nu]$ is $\A$-free, we thus infer from Theorem~\ref{thm:guido filip} that $\dpr{\id, \nu_x^\infty} \in \Lambda_\A$ for $\abs{[\nu]^s}$-a.e.\ $x \in \Omega$. On the other hand, $\dpr{\id, \nu_x^\infty} = 0 \in \Lambda_\A$ for $\lambda_\nu^\ast$-a.e.\ $x \in \Omega$, where $\lambda_\nu^\ast$ is the singular part of $\lambda_\nu^s$ with respect to $\abs{[\nu]^s}$. This shows that $S'$ has full $\lambda_\nu^s$-measure.
	
	Fix $x_0 \in S \cap S'$ (which remains of full $\lambda_\nu^s$-measure in $\Omega$).
	Let $\sigma \in \Y_{\A^k}(Q;\R^N)$ be the non-zero singular tangent Young measure to $\nu$ at $x_0$ given by Proposition~\ref{prop: localization} which according to the same proposition satisfies that $\lambda_\sigma(Q) = 1$ and $\lambda(\partial Q) = 0$. 
		On the one hand, since $x_0 \in S$, it holds that
	\[
	\sigma_y = \delta_0 \quad \text{$\Lcal^d$-a.e.} \qquad \text{and} \qquad
	\sigma_y^\infty = \nu_{x_0}^\infty \quad \text{$\lambda_\sigma$-a.e.}
	\]
	On the other hand, we use the fact that $x_0 \in S'$ to get
	\begin{equation}\label{eq: bary}
	\dprb{ \id , \nu^\infty_{x_0}} \in \Lambda_\A 
	\qquad\text{and}\qquad
	[\sigma]  = \dprb{ \id , \nu^\infty_{x_0} } \, \lambda_\sigma \in \M(Q;V_{\A}).
	\end{equation}
	Note that, by~\eqref{eq: bary},  all the hypotheses of Lemma~\ref{lem:subspace} are satisfied for $\sigma$.
	Thus, 
	\begin{equation*}
	\supp \nu_{x_0}^\infty = \supp \sigma_y^\infty \subset V_\A \qquad \text{for $\lambda_\sigma$-a.e.\ $y \in Q$}.
	\end{equation*}
	This equality and the fact that $\lambda_\sigma(Q) > 0$ (recall that $\sigma$ is a non-zero singular measure) yield 
	\begin{equation}\label{eq: paso}
	\supp \nu_{x_0}^\infty \subset V_\A  \qquad
	\text{for $\lambda_\nu^s$-a.e.\ $x_0 \in \Omega$.}
    \end{equation}
    
    \proofstep{Step~2: Convexity of $g$ on $\Lambda_\A$.}
    The Kirchheim--Kristensen Theorem~\ref{thm:KK} states that the restriction $g|_{V_{\A}} \colon V_{\A} \subset \R^N \to \R$ is a convex function at points $A_0 \in \Lambda_\A$. In other words, for every probability measure $\nu \in \M^1(\R^N)$ with $\dpr{\id,\nu} \in \Lambda_{\A}$ and $\supp \nu \subset V_\A$, the Jensen inequality
	\[
	g\left( \int_{\R^N} A \dd\nu(A) \right) \leq \int_{\R^N} g(A) \dd\nu(A)
	\]  
	holds. Hence, because of~\eqref{eq: bary} and~\eqref{eq: paso}, it follows that
	\[
	g\big(\dprb{ \id , \nu^\infty_{x_0}}\big) \leq \dprb{ g, \nu^\infty_{x_0}}.
	\]
	This proves the assertion.
\end{proof}

The following simple corollary will be important in the proof of Theorem~\ref{thm:relax}.

\begin{corollary}\label{cor:principle}
Let $h:\R^N \to \R$ be an upper semicontinuous integrand with linear growth at infinity and let $\nu \in \Y_{\A}(\Omega;\R^N)$ be an $\A$-free Young measure. Then, for $\Lcal^d$-almost every $x_0 \in \Omega$ it holds that 
	\[
	Q_{\A^k}h\left(\dprb{\id, \nu_{x_0}} + \dprb{\id , \nu^\infty_{x_0}} \frac{\di \lambda_\nu}{\di \Lcal^d}(x_0) \right) \leq 
	\dprb{h ,\nu_{x_0}} + \dprb{ h^\# , \nu_{x_0}^\infty } \frac{\di \lambda_\nu}{\di \Lcal^d}(x_0).
	\] 
	Moreover, for $\lambda_\nu^s$-a.e. $x_0 \in \Omega$ it holds that
	\[
	(Q_{\A^k}h)^\#\big(\dprb{ \id , \nu^\infty_{x_0}}\big) \leq \dprb{ h^\#, \nu^\infty_{x_0}} 
	\]
\end{corollary}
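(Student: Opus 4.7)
The plan is to obtain both inequalities as direct consequences of Propositions~\ref{prop: jensen regular} and~\ref{prop: jensen singular}, applied not to $h$ itself (which is not assumed quasiconvex) but to its $\A^k$-quasiconvex envelope $Q_{\A^k}h$, and then to compare $Q_{\A^k}h$ and $(Q_{\A^k}h)^\#$ with $h$ and $h^\#$ by pointwise monotonicity.

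First I would record the relevant structural properties of $Q_{\A^k}h$. Since $h$ is upper semicontinuous and has linear growth at infinity, Lemma~\ref{lem:qce_property} shows that $Q_{\A^k}h$ is upper semicontinuous and $\A^k$-quasiconvex, and clearly inherits the linear growth bound from $h$. Therefore $(Q_{\A^k}h)^\#$ is well-defined and, by Corollary~\ref{cor:coneconvexity}, is both positively $1$-homogeneous and $\Lambda_\A$-convex. Moreover, the pointwise inequality $Q_{\A^k}h\le h$ passes to the upper recession functions: taking $\limsup$ of $(Q_{\A^k}h)(x',tA')/t \le h(x',tA')/t$ along sequences $x'\to x$, $A'\to A$, $t\to\infty$ yields $(Q_{\A^k}h)^\# \le h^\#$ everywhere on $\cl{\Omega}\times\R^N$.

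For the first inequality, I would apply Proposition~\ref{prop: jensen regular} with $h$ replaced by $Q_{\A^k}h$ (all hypotheses are satisfied by the first paragraph): for $\Lcal^d$-a.e.\ $x_0\in\Omega$,
\begin{equation*}
Q_{\A^k}h\!\left(\dprb{\id,\nu_{x_0}}+\dprb{\id,\nu_{x_0}^\infty}\tfrac{\di\lambda_\nu}{\di\Lcal^d}(x_0)\right)
\le \dprb{Q_{\A^k}h,\nu_{x_0}}+\dprb{(Q_{\A^k}h)^\#,\nu_{x_0}^\infty}\tfrac{\di\lambda_\nu}{\di\Lcal^d}(x_0).
\end{equation*}
The right-hand side is then bounded above by $\dprb{h,\nu_{x_0}}+\dprb{h^\#,\nu_{x_0}^\infty}\tfrac{\di\lambda_\nu}{\di\Lcal^d}(x_0)$ using $Q_{\A^k}h\le h$ and $(Q_{\A^k}h)^\#\le h^\#$, which is the desired estimate.

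For the singular inequality, I would take $g\coloneqq (Q_{\A^k}h)^\#$; by the preparatory paragraph, $g$ is positively $1$-homogeneous and $\Lambda_\A$-convex with linear growth. Proposition~\ref{prop: jensen singular} then yields, for $\lambda_\nu^s$-a.e.\ $x_0\in\Omega$,
\begin{equation*}
(Q_{\A^k}h)^\#\bigl(\dprb{\id,\nu_{x_0}^\infty}\bigr) \le \dprb{(Q_{\A^k}h)^\#,\nu_{x_0}^\infty} \le \dprb{h^\#,\nu_{x_0}^\infty},
\end{equation*}
where the second inequality is again the pointwise comparison. Since the null sets involved are a countable union of $\Lcal^d$-null and $\lambda_\nu^s$-null sets, both inequalities hold simultaneously off a set of the appropriate null measure, completing the proof. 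There is no real obstacle here: the whole content is that the two propositions together with the basic monotonicity of the recession operation under pointwise inequalities yield the statement for the envelope; the hard work is already encapsulated in the cited results.
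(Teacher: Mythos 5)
Your proof is correct and follows essentially the same route as the paper: the paper's proof also simply combines Propositions~\ref{prop: jensen regular} and~\ref{prop: jensen singular}, Lemma~\ref{lem:qce_property}, Corollary~\ref{cor:coneconvexity}, and the trivial monotonicity $Q_{\A^k}h \le h$, $(Q_{\A^k}h)^\# \le h^\#$. The only small caveat worth noting is that Lemma~\ref{lem:qce_property} and Proposition~\ref{prop: jensen regular} are stated for nonnegative integrands, so one should read $h:\R^N\to[0,\infty)$ in the corollary (as the paper implicitly does throughout); with that understanding your argument is complete.
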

\begin{proof}
The proof follows by combining Propositions~\ref{prop: jensen regular} and~\ref{prop: jensen singular}, Lemma~\ref{lem:qce_property}, Corollary~\ref{cor:coneconvexity} and the trivial inequalities  $Q_{\A^k}h \le h$, $(Q_{\A^k}h)^\# \le h^\#$.
 
\end{proof}

%

\section{
Proof of Theorems~\ref{thm:lsc} and~\ref{thm:lsc_partial}}\label{sc:lsc_special}



\begin{proof}[Proof of Theorem~\ref{thm:lsc}] 
We will prove Theorem~\ref{thm:lsc} in full generality, which means that we consider asymptotically $\A$-free sequences in the $\Wrm^{-k,q}$-norm for some $q \in (1,d/(d - 1))$; see Remark \ref{thm:lsc_ext}. 

\proofstep{Proof under Assumption (i).}  Let $\mu_j$ be a sequence in $\M(\Omega;\R^N)$ weakly* converging to a limit $\mu$ and assume furthermore that $\A \mu_j \to 0$ in $\Wrm^{-k,q}(\Omega;\R^N)$ for some $q \in (1,d/(d-1))$. Up to passing to a subsequence, we might also assume that
\[
\liminf_{j \to \infty} \Fcal[\mu_j] = \lim_{j \to \infty} \Fcal[\mu_j]
\]
and that $\mu_j \toY \nu$ for some $\A$-free Young measure $\nu \in \Y_{\A}(\Omega;\R^N)$. Using the continuity of $f$ and representation of Corollary~\ref{cor:representation} we get
\[
\Fcal[\mu_j] = \ddprb{f,\delta[\mu_j]} \to \ddprb{f,\nu} \quad \text{as $j \to \infty$}.
\]
The positivity of $f$ further lets us discard possible concentration of mass on $\partial \Omega$, 
\begin{equation}\label{eq:mientras}
\begin{split}
\lim_{j \to \infty} \Fcal[\mu_j] & = \int_{\cl \Omega} \dprb{f(x,\frarg),\nu_x}\dd x + \int_{\cl \Omega} \dprb{f^\infty(x,\frarg),\nu_x^\infty} \dd \lambda_\nu(x) \\
& \ge \int_{\Omega} \bigg(\dprb{f(x,\frarg),\nu_x} +\dprb{f^\infty(x,\frarg),\nu_x^\infty}\frac{\dd \lambda_\nu}{\dd \Lcal^d}(x)\bigg) \dd x \\
& \qquad + \int_\Omega  \dprb{f^\infty(x,\frarg),\nu_x^\infty} \dd \lambda^s_\nu(x). 
\end{split}
\end{equation}

By assumption, $f(x,\frarg) \in \Crm(\R^N)$ has linear growth at infinity. Hence we might apply Proposition~\ref{prop: jensen regular} to get
\[
	f\bigg(x,\dprb{\id, \nu_{x}} + \dprb{\id , \nu^\infty_{x}} \frac{\di \lambda_\nu}{\di \Lcal^d}(x)\bigg) \leq 
	\dprb{f(x,\frarg) ,\nu_{x}} + \dprb{f(x,\frarg)^\infty , \nu_{x}^\infty} \frac{\di \lambda_\nu}{\di \Lcal^d}(x)
	\] 
	for $\Lcal^d$-a.e.~$x\in \Omega$ (recall that under the present assumptions $f^\infty = f^\#$). Likewise, we apply Proposition~\ref{prop: jensen singular} to the functions $f(x,\frarg)^\#$ to obtain 
\[
	f(x,\frarg)^\infty\big(\dprb{\id , \nu^\infty_{x}}\big) \leq \dprb{f(x,\frarg)^\infty, \nu^\infty_{x}} 
	\]
at $\lambda^s_\nu$-a.e.~$x \in \Omega$. Plugging these two Jensen-type inequalities into~\eqref{eq:mientras} yields
\begin{equation}\label{eq:mientras2}
\begin{split}
\lim_{j \to \infty} \Fcal[\mu_j] &  \ge \int_{\Omega} f\bigg(x, \dprb{\id, \nu_{x}} + \dprb{\id , \nu^\infty_{x}} \frac{\di \lambda_\nu}{\di \Lcal^d}(x)\bigg) \dd x \\ 
& \qquad + \int_{\Omega} f^\infty\big(x,\dprb{\id_{\R^N},\nu_x^\infty}\big) \dd \lambda_\nu^s(x).
\end{split}
\end{equation}
Finally, since $\mu_j \toY \nu$, it must hold that 
\[
\dprb{ \id, \nu_{x}} + \dprb{ \id , \nu^\infty_{x}} \frac{\di \lambda_\nu}{\di \Lcal^d}(x) = \frac{\dd \mu}{\dd \Lcal^d}(x) \quad \text{for $\Lcal^d$-a.e. $x\in \Omega$, \quad and}
\]
\[
\dprb{\id_{\R^N},\nu_x^\infty} \lambda^s_\nu = \mu^s \quad \Rightarrow \quad \frac{\dd \mu^s}{\dd |\mu^s|}(x) = \frac{\dprb{\id_{\R^N},\nu_x^\infty}}{\big|\dprb{\id_{\R^N},\nu_x^\infty}\big|} \quad \text{for $\lambda^s_\nu$-a.e. $x \in \Omega$}.
\]
We can use this representation and the fact that $f^\infty(x,\frarg)$ is positively $1$-homogeneous in the right hand side of~\eqref{eq:mientras2} to conclude 
\begin{equation*}
\begin{split}
\lim_{j \to \infty} \Fcal[\mu_j] & \ge \int_{\Omega} f\bigg(x,\frac{\dd \mu}{\dd \Lcal^d}(x)\bigg) \dd x \\
& \qquad +  \int_{\Omega}f^\infty\bigg(x,\frac{\dd \mu^s}{\dd |\mu^s|}(x)\bigg) \dd \big(\big|\dprb{\id_{\R^N},\nu_x^\infty}\big| \lambda^s_\nu\big)(x) \\
& = \int_{\Omega} f\bigg(x,\frac{\dd \mu}{\dd \Lcal^d}(x)\bigg) \dd x \\
& \qquad +  \int_{\Omega}f^\infty\bigg(x,\frac{\dd \mu^s}{\dd |\mu^s|}(x)\bigg) \dd |\mu^s|(x) \\
&= \Fcal[\mu].
\end{split}
\end{equation*}
This proves the claim under Assumption (i).

\proofstep{Proof under Assumption (ii).} For a measure $\mu \in \M(\Omega;\R^N)$, consider the functional 
\[
\Fcal^\#[\mu;B] := \int_B f\bigg(x,\frac{\di \mu}{\di \Lcal^d}(x)\bigg) \dd x + \int_B f^\#\bigg(x,\frac{\di \mu^s}{\di |\mu^s|}(x)\bigg) \dd |\mu^s|(x),
\]
defined for any Borel subset $B \subset \Omega$.

Let $\mu_j$ be a sequence in $\M(\Omega;\R^N)$ that weakly* converges to a limit $\mu$ and assume furthermore that $\A \mu_j \to 0$ in $\Wrm^{-k,q}(\Omega;\R^N)$ for some $q \in (1,d/(d-1))$. Define $\lambda_j \in \M^+(\Omega)$ via
\[
\lambda_j(B) := \Fcal^\#[\mu_j;B] \qquad\text{for every Borel $B \subset \Omega$}.
\]
We may find a (not relabeled) subsequence and positive measures $\lambda,\Lambda \in \M_+(\Omega)$ such that 
\[
\lambda_j \toweakstar \lambda, \;\; |\mu_j| \toweakstar \Lambda \quad \text{in $\M^+({\Omega})$}.
\]
We claim that
\begin{align}
\label{hybrid1}\frac{\di \lambda}{\di \Lcal^{d}}(x_0) &\ge f\bigg(x_0,\frac{\di \mu}{\di \Lcal^d}(x_0)\bigg) &\text{for $\Lcal^d$-a.e.\ $x_0 \in \Omega$}, &&\\
\label{hybrid2}\frac{\di \lambda}{\di |\mu^s|}(x_0) &\ge f^\#\bigg(x_0,\frac{\di \mu^s}{\di |\mu^s|}(x_0)\bigg) &\text{for $|\mu^s|$-a.e.\ $x_0 \in \Omega$}. &&
\end{align}
Notice that, if~\eqref{hybrid1} and~\eqref{hybrid2} hold, then the assertion of the theorem immediately follows. Indeed, by the Radon--Nikod\'{y}m theorem,
\[
\lambda \geq \frac{\di \lambda}{\di \Lcal^d} \Lcal^d + \frac{\di \lambda}{\di |\mu^s|}  |\mu^s|.
\]
Hence, we obtain
\begin{align}
\liminf_{j \to \infty} \Fcal^\#[\mu_j] & = \liminf_{j \to \infty} \lambda_j(\Omega)  \notag\\
&\ge \lambda(\Omega) \notag\\
&{\ge} \int_\Omega \frac{\di \lambda}{\di \Lcal^d} \dd x + \int_{\Omega}\frac{\di \lambda}{\di |\mu^s|} \dd |\mu^s| \notag\\
& \ge \int_\Omega f\bigg(x,\frac{\di \mu}{\di \Lcal^d}(x)\bigg) \dd x + \int_\Omega f^\#\bigg(x,\frac{\di \mu^s}{\di |\mu^s|}(x)\bigg) \dd |\mu^s| \notag\\
& = \Fcal^\#[\mu].\label{mio}
\end{align}
With~\eqref{hybrid1},~\eqref{hybrid2}, which are proved below, the result under Assumption (ii) follows. This completes the proof of the theorem.
\end{proof}

We now prove~\eqref{hybrid1} and~\eqref{hybrid2}. Let us first show the following auxiliary fact.

\begin{lemma}\label{lem:aproximacion} Let $x_0 \in \Omega$ and $R > 0$ be such that $Q_{2R}(x_0) \subset \Omega$.
Then, for every $h \in \Nbb$, there exists a sequence $\big(u^{h}_{j}\big) \subset \Lrm^2(\R^d;\R^N)$ such that
\begin{equation}\label{eq:mollifier approximation}
\begin{split}
& u^{h}_{j} \to \mu_j \quad \text{area-strictly in $\M(Q_R(x_0);\R^N)$}
\qquad\text{and} \\
& {\|\A^k u^{h}_{j} - \A^k \mu_j \|_{\Wrm^{-k,q}(Q_R(x_0))} \to 0.} \qquad\text{as \quad $h \to \infty$}
\end{split}
\end{equation}
\end{lemma}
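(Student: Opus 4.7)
The plan is to define $u^h_j$ as a standard mollification of $\mu_j$. Fix a symmetric nonnegative $\rho \in \Crm_c^\infty(B_1)$ with $\int \rho \dd x = 1$, set $\rho_\eps(x) \coloneqq \eps^{-d}\rho(x/\eps)$, extend each $\mu_j$ by zero to a measure on $\R^d$, and let
\begin{equation*}
u^h_j \coloneqq \mu_j \ast \rho_{1/h}.
\end{equation*}
Since $|\mu_j|(\Omega) < \infty$ and $\Omega$ is bounded, $u^h_j$ is smooth and compactly supported in the $(1/h)$-neighbourhood of $\cl \Omega$, so in particular $u^h_j \in \Lrm^2(\R^d;\R^N)$.

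The area-strict convergence on $Q_R(x_0)$ I would reduce to Remark~\ref{rem:area}, which gives $u^h_j\Lcal^d \to \mu_j$ area-strictly in $\Mcal(\R^d;\R^N)$ as $h \to \infty$. For all but countably many radii $r \in (0, 2R)$ one has $|\mu_j|(\partial Q_r(x_0)) = 0$, so upon replacing $R$ by a slightly smaller radius $R'$ in this co-countable set (still with $Q_{2R'}(x_0) \subset \Omega$) the global area-strict convergence restricts to area-strict convergence on the open cube $Q_{R'}(x_0)$, via the standard fact that weak* limits of positive Radon measures whose limit carries no mass on the boundary of a fixed open set restrict to weak* limits there. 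This tiny shrinkage of $R$ is harmless for the intended application.

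For the $\Wrm^{-k,q}$ convergence, convolution commutes with $\A^k$, hence $\A^k u^h_j = (\A^k \mu_j) \ast \rho_{1/h}$ on $Q_R(x_0)$ whenever $1/h < R$ (the zero-extension of $\mu_j$ outside $\Omega$ has no effect on $Q_R(x_0)$ because $Q_{2R}(x_0) \subset \Omega$). I would first check that $\A^k \mu_j \in \Wrm^{-k,q}(\Omega;\R^n)$ by splitting
\begin{equation*}
\A^k \mu_j = \A \mu_j - \sum_{|\alpha|\le k-1} A_\alpha \partial^\alpha \mu_j;
\end{equation*}
the first piece lies in $\Wrm^{-k,q}$ by the ambient hypothesis $\A\mu_j \to 0$ in $\Wrm^{-k,q}$, while the compact embedding $\Mcal(\Omega;\R^N) \cembed \Wrm^{-1,q}(\Omega;\R^N)$, valid for $1 < q < d/(d-1)$, combined with the boundedness of $\partial^\alpha\colon \Wrm^{-1,q} \to \Wrm^{-1-|\alpha|,q} \subset \Wrm^{-k,q}$ for $|\alpha|\le k-1$, handles the lower-order pieces. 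The conclusion $\A^k u^h_j \to \A^k \mu_j$ in $\Wrm^{-k,q}(Q_R(x_0))$ then follows from the standard continuity of mollification in $\Wrm^{-k,q}_\loc$, applied to the distribution $\A^k\mu_j$.

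The only technical subtlety in this scheme is at the boundary $\partial Q_R(x_0)$, and it is absorbed by the slight shrinkage of $R$ described above. Beyond this bookkeeping no new ideas are required: the whole argument reduces to Remark~\ref{rem:area}, the compact embedding $\Mcal \cembed \Wrm^{-1,q}$, and the standard continuity of mollification in $\Wrm^{-k,q}_\loc$.
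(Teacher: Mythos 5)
Your proof is correct and follows the same mollification approach as the paper's own terse proof, which simply sets $u^{h}_{j}=(\mu_j\restrict Q_{3R/2}(x_0))\ast\rho_{1/h}$ and appeals to standard mollification properties together with Remark~\ref{rem:area}. You merely spell out the two points the paper leaves implicit: shrinking $R$ slightly so that $|\mu_j|(\partial Q_R(x_0))=0$ in order to restrict the global area-strict convergence, and using the ambient hypothesis $\A\mu_j\to 0$ in $W^{-k,q}$ together with the compact embedding $\Mcal\cembed W^{-1,q}$ to verify that $\A^k\mu_j$ lies in $W^{-k,q}_{\loc}$, so that the mollified sequence converges to it in that norm.
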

\begin{proof}
Let $\{\rho_{\varepsilon}\}_{\varepsilon > 0}$ be a family of standard smooth mollifiers.  The sequence defined by
\[u^{h}_{j}:= \big({\mu_j\restrict Q_{3R/2}(x_0)\big) \ast \rho_{1/h}}  \in \Crm^\infty(\cl{Q_{2R}(x_0)};\R^N)
\] 
satisfies all the conclusion properties as a
consequence of the properties of mollification and Remark~\ref{rem:area}
\end{proof}

\begin{proof}[Proof of~\eqref{hybrid1}]
We employ the classical blow-up method to organize the proof.
We know from Lebesgue's differentiation theorem and~\eqref{eq:ae tangent} that the following properties hold for $\Lcal^d$-almost every $x_0$ in $\Omega$:
\begin{gather*}
\frac{\di \lambda}{\di \Lcal^{d}}(x_0) = \lim_{r \todown 0} \frac{\lambda(Q_r(x_0))}{r^d} < \infty, \qquad
\lim_{r \todown 0} \frac{|\mu^s|(Q_r(x_0))}{r^d} < \infty,\\
\lim_{r \todown 0}\frac{1}{r^d}\int_{Q_r(x_0)} \bigg|\frac{\di \mu}{\di \Lcal^d}(y) - \frac{\di \mu}{\di \Lcal^d}(x_0)\bigg| \dd y = 0,\\
\lim_{r \todown 0}\frac{1}{r^d}\int_{Q_r(x_0)} \bigg|\frac{\di \Lambda}{\di \Lcal^d}(y) -\frac{\di \Lambda}{\di \Lcal^d}(x_0)\bigg| \dd y = 0,
\end{gather*}
and
\begin{gather}\label{eq:tangent assumption}
\Tan(\mu,x_0) = \setBB{\alpha \cdot \frac{\di \mu}{\di \Lcal^d}(x_0) \, \Lcal^d}{\alpha \in \R^+ \cup \{0\}}.
\end{gather}

Let $x_0 \in \Omega$ be a point where the properties above are satisfied. Since $\Omega$ is an open set, there exists a positive number $R$ such that $Q_{{2R}(x_0)} \subset \Omega$.
From Lemma~\ref{lem:aproximacion}, we infer that for almost every $r \in (0,R)$, it holds that
\begin{align}
\wslim_{j \to \infty} \; \wslim_{h \to \infty} \, \bigl[ u^{h}_j(x_0 + r y) \, \Lcal^d_y \bigr]
&= \wslim_{j \to \infty} \; \wslim_{h \to \infty} \, r^{-d}T^{(x_0,r)}_\# [u^{h}_j \Lcal^d]  \notag\\
&= \wslim_{j \to \infty} \, r^{-d} T_\#^{(x_0,r)} \mu_j  \notag\\
&= r^{-d}T^{(x_0,r)}_\# \mu,   \label{eq:transition limit}
\end{align}
where the weak* convergence is to be understood in $\M(Q;\R^N)$. 
Thus, choosing a sequence $r \todown 0$ with $\lambda_j(\partial Q_r(x_0)) = 0$ and $\Lambda(\partial Q_r(x_0)) = 0$ (by the finiteness of these measures), we get that
\begin{align}
\frac{\di \lambda}{\di \Lcal^{d}}(x_0) & = \lim_{r \to 0} \lim_{j \to \infty}\frac{\lambda_j(Q_r(x_0))}{r^d}  \notag\\
&= \lim_{r \to 0} \lim_{j \to \infty} \frac{\Fcal^\# [\mu_j ; Q_r(x_0)]}{r^d}  \notag\\
& \ge \limsup_{r \to 0} \limsup_{j \to \infty}\limsup_{h \to \infty }\frac{\Fcal^\#[u^{h}_{j} \Lcal^d; Q_r(x_0)]}{r^d}  \notag\\
&= \limsup_{r \to 0} \limsup_{j \to \infty}\limsup_{h \to \infty } \int_Q f \bigl( x_0 + ry, u^{h}_j(x_0 + ry) \bigr) \dd y,    \label{eq:transition limit2}
\end{align}
where we used Corollary~\ref{cor:representation} and Remark~\ref{rem:strict} for the \enquote{$\geq$} estimate. 

Moreover, by the Lebesgue differentiation theorem (see~\eqref{eq:tangent assumption}),
\begin{equation} \label{eq:transition limit3}
  r^{-d}T^{(x_0,r)}_\# \mu \toweakstar \frac{\di \mu}{\di \Lcal^d}(x_0) \, \Lcal^d.
\end{equation}

By~\eqref{eq:transition limit},~\eqref{eq:transition limit2},~\eqref{eq:transition limit3} and a suitable diagonalization procedure (recall that all measures involved have locally uniformly bounded variation), we can find sequences $r_m \todown 0$, $j_m \to \infty$, $h_m \to \infty$ (as $m \to 
\infty$) such that for
\[
u_m := u^{h_m}_{j_m} \quad \text{and} \quad \gamma_m \coloneqq r_m^{-d}T_\#^{(x_0,r_m)} [u_m\, \Lcal^d]\]
it holds that
\begin{enumerate}
\item $\displaystyle \gamma_m \toweakstar \frac{\di \mu}{\di \Lcal^d}(x_0) \, \Lcal^d$;
\item $\displaystyle \frac{\di \lambda}{\di \Lcal^{d}}(x_0) \ge \lim_{m \to \infty} \int_Q f \biggl( x_0 +r_m y, \frac{\di \gamma_m}{\di \Lcal^d}(y) \biggr) \dd y$.
\end{enumerate}

By Proposition~\ref{lem:nonh} and the first property,
\begin{equation*}
\begin{split}
& \gamma_m- \frac{\di \mu}{\di \Lcal^d}(x_0)\, \Lcal^d \toweakstar 0\quad \text{in $\M(Q;\R^N)$}, \quad \text{and}\\
& \A^k \bigg(\gamma_m- \frac{\di \mu}{\di \Lcal^d}(x_0)\, \Lcal^d\bigg)  \to 0 \quad \text{in $\Wrm^{-k,q}(Q;\R^N)$}.
\end{split}
\end{equation*}

We are now in a position to apply Lemma~\ref{lem:boundary values} to the sequence $\gamma_m$ and the Lipschitz function $f(x_0,\frarg)$, whence there exists a sequence $(z_m)\subset \Crm^\infty_\per(Q;\R^N)$ such that
	\[
	\A z_m = 0, \qquad \int_Q z_m = 0, \qquad z_m \toweakstar 0 \quad \text{in $\Mcal(Q;\R^N)$},
	\]
	and  
	\[
	\liminf_{m \to \infty} \int_Q  f\bigg(x_0,\frac{\dd \gamma_m}{\dd \Lcal^d}(y)\bigg) \dd y \ge 
	\liminf_{m \to \infty} \int_Q  f\bigg(x_0,\frac{\dd \mu}{\dd \Lcal^d}(x_0) + z_m(y)\bigg) \dd y.
	\]
	
Hence, using the second property above and our assumption~\eqref{eq:modulus} on the integrand, we have
\begin{align}
\frac{\di \lambda}{\di \Lcal^d}(x_0)
& \ge \lim_{m \to \infty} \int_Q f \biggl( x_0 +r_m y, \frac{\di \gamma_m}{\di \Lcal^d}(y) \biggr) \dd y  \notag\\
& = \lim_{m \to \infty} \int_Q f \biggl( x_0, \frac{\di \gamma_m}{\di \Lcal^d}(y) \biggr) \dd y  \notag\\
& \ge  \liminf_{m \to \infty} \; \int_Q f\bigg(x_0,\frac{\di \mu}{\di \Lcal^d}(x_0) + z_m(y)\bigg)   \notag\\
& \ge f\bigg(x_0,\frac{\di \mu}{\di \Lcal^d}(x_0)\bigg). \label{eq:absabs}
\end{align}
This proves~\eqref{hybrid1}.\end{proof}


\begin{remark}\label{rem:absabs}
If the assumption that $f(x,\frarg)$ is $\A^k$-quasiconvex is dropped, 
one can still show that
 \[
\frac{\di \lambda}{\di \Lcal^d}(x_0) \ge Q_{\Acal^k}f\bigg(x_0,\frac{\di \mu}{\di \Lcal^d}(x_0)\bigg). 
 \]
Indeed, the $\A^k$-quasiconvexity of $f(x,\frarg)$ has only been used in the last inequality of~\eqref{eq:absabs} where one can first use the inequality $f(x,\frarg) \ge Q_{\Acal^k}f(x,\frarg)$ to get
\[
\int_Q f\bigg(x_0,\frac{\di \mu}{\di \Lcal^d}(x_0) + z_r(y)\bigg) \ge
\int_Q Q_{\A^k}f\bigg(x_0,\frac{\di \mu}{\di \Lcal^d}(x_0) + z_r(y)\bigg).
\]
which follows by the very definition of  $Q_{\Acal^k}f(x,\frarg)$. 
\end{remark}

\begin{proof}[Proof of~\eqref{hybrid2}] Passing to a subsequence if necessary, we may assume that
\[
\mu_j \toY \nu \quad \text{for some $\nu \in \Y_{\A}(\Omega;\R^N)$}.
\]
For each $j \in \N$ set $\nu_j := \delta[\mu_j] \in \Y(\Omega;\R^N)$, the elementary Young measure corresponding to $\mu_j$, so that 
$\nu_j \ysc \nu$ in $\Y(\Omega;\R^N)$. Define the functional 
\[
\Fcal_\#[\sigma;B] \coloneqq \int_B \dprb{f(x,\frarg),\sigma_x} \dd x + \int_{\cl{B}} \dprb{f_\#(x,\frarg),\sigma_x^\infty} \dd \lambda_{\nu}(x), \qquad \sigma \in \Y(\Omega;\R^N),
\]
where $B \subset \Omega$ is an open set. Observe that, as a functional defined on $\Y(\Omega;\R^N)$, $\Fcal_\#$ is sequentially weakly* lower semicontinuous (see Corollary~\ref{cor:representation}). We use Assumption~(ii), which is equivalent to
\[
\text{$f^\#(x,\frarg) \equiv f_\#(x,\frarg)$ \quad on $V_{\A}$},
\]
and the fact, proved in~\eqref{eq: paso}, that
\[
\text{$\supp \nu_x^\infty \subset V_{\A}$ \; for $\lambda^s_\nu$-a.e. $x \in \Omega$,}
\]
to get (recall $f \geq 0$) 
\begin{align}
\liminf_{j \to \infty}\Fcal^\#[\mu_j;B]
&\ge \liminf_{j \to \infty}\Fcal_\#[\nu_j;B] \notag\\
&\ge \Fcal_\#[\nu;B] \notag\\
& \ge \int_{B} \left(\dprb{  f(x,\frarg) , \nu_x } + \dprb{ f_\#(x,\frarg),\nu^\infty_x } \frac{\di \lambda_\nu}{\di \Lcal^d}(x) \right)\dd x \notag\\
&\qquad +\int_{B} \dprb{ f_\#(x,\frarg),\nu^\infty_x} \dd\lambda^s_\nu(x) \notag\\
&\ge 
\int_{B} \dprb{ f^\#(x,\frarg),\nu^\infty_x} \dd\lambda^s_\nu(x). \label{eq:instead}
\end{align}
Recall that, for every $x \in \Omega$, the function $f(x,\frarg) $ is $\A^k$-quasiconvex and hence the function $f^\#(x,\frarg) $ is  $\Lambda_{\A}$-convex and positively $1$-homogeneous. An application of the Jensen-type inequality from Proposition
~\ref{prop: jensen singular} to the last line yields
\begin{align*}
\liminf_{j \to \infty}\Fcal^\#[\mu_j;B]  & \ge 
 \int_{B} f^\#\big(x, \dprb{ \id , \nu_x^\infty }\big) \dd\lambda_\nu^s(x).
\end{align*}
Thus, also taking into account $\abs{\mu^s} = \abs{\dpr{\id,\nu^\infty_x}} \, \lambda_\nu^s$ and $f^\#(x,\dpr{\id,\nu^\infty_x}) = f^\#(x,0) = 0$ for $\lambda_\nu^\ast$-a.e.\ $x \in \Omega$, where $\lambda_\nu^\ast$ is the singular part of $\lambda_\nu^s$ with respect to $\abs{\mu^s}$, we get
\begin{align*}
\lambda(B) \ge \int_{B}  f^\#\bigg(x,\frac{\dd \mu^s}{\dd |\mu^s|}(x) \bigg) \dd|\mu^s|(x),
\end{align*}
for all open sets $B \subset \Omega$ with $\lambda_\nu^s(\partial B) = 0$. Therefore, by the Besicovitch differentiation theorem and using the continuity of $f$ (see~\eqref{eq:modulus}) in its first argument we get
\[
\frac{\dd \lambda}{\dd |\mu^s|}(x_0) \ge f^\#\bigg(x_0,\frac{\dd \mu^s}{\dd |\mu^s|}(x_0)\bigg) \qquad \text{for $|\mu^s|$-a.e. $x_0 \in \Omega$}.
\]
This proves~\eqref{hybrid2}.
\end{proof}

\begin{remark}[recession functions]\label{rem:crucial} The only part of the proof where we use the existence of $f^\infty(x,A)$, for $x \in \Omega$ and $A \in V_{\A}$, is in showing that 
\begin{align*}
\Fcal_\#[\nu;B]  & \ge \int_{B} \left(\dprb{  f(x,\frarg) , \nu_x } + \dprb{ f_\#(x,\frarg),\nu^\infty_x } \frac{\di \lambda_\nu}{\di \Lcal^d}(x) \right)\dd x \\
&\qquad +\int_{B} \dprb{ f^\#(x,\frarg),\nu^\infty_x} \dd\lambda^s_\nu(x)
\end{align*}
{The need of such an estimate comes from the fact that, in general, it is unknown whether $f_\#$ is a $\Lambda_{\A}$-convex function}.
\end{remark}




\begin{remark}\label{rem:lb} 
If we drop the assumption that $f(x,\frarg)$ is $\A^k$-quasiconvex for every $x \in \Omega$, we can still show that
\begin{align*}
\int_\Omega &  Q_{\A^k}f\bigg(x,\frac{\dd \mu}{\dd \Lcal^d}(x)\bigg) \dd x + \int_\Omega (Q_{\A^k}f)^\#\bigg(x,\frac{\dd \mu^s}{\dd |\mu^s|}(x)\bigg) \dd |\mu^s|(x) \le  \liminf_{j \to \infty} \Fcal[\mu_j]
\end{align*}
for every sequence $\mu_j \toweakstar \mu$ in $\M(\Omega;\R^N)$ such that $\A \mu_j \to 0$ in $\Wrm^{-k,q}(\Omega)$.
The proof of this fact follows directly from Remark~\ref{rem:absabs}, the last line of~\eqref{eq:instead} together with the continuity of $f$ in its first argument (for the Besicovitch differentiation arguments), 
and Corollary~\ref{cor:principle}. Observe that one does not require the existence of $(Q_{\A^k} f)^\infty$ in $\Omega \times \text{span}\,\Lambda_{\A}$.
\end{remark}


\begin{proof}[Proof of Theorem~~\ref{thm:lsc_partial}]
Note that in the proof of~\eqref{hybrid1} we did not use that $f^\infty$ exists in $\Omega \times \text{span}\, \Lambda_{\A}$. By the very same argument as in~\eqref{mio}, is easy to check that Theorem~\ref{thm:lsc_partial} is an immediate consequence of~\eqref{hybrid1}.

\end{proof}

\section{Proof of Theorems~\ref{thm:relax} and~\ref{thm:relax2}} \label{sc:relax_proof}

We use standard machinery to show the relaxation theorems. Recall that, for Theorems~\ref{thm:relax} and~\ref{thm:relax2}, we assume that $\A$ is a homogeneous partial differential operator.

\subsection{Proof of Theorem~\ref{thm:relax}}
	\proofstep{Step~1. The lower bound.}
	The lower bound $\overline{\Gcal} \geq \Gcal_*$, where
	\[
	  \Gcal_*[\mu] :=
	\int_\Omega Q_{\A}f\bigg(x,\frac{\dd \mu}{\dd \Lcal^d}(x)\bigg) \dd x + \int_\Omega (Q_{\A}f)^\#\bigg(x,\frac{\dd \mu^s}{\dd |\mu^s|}(x)\bigg) \dd |\mu^s|(x),
	\]
	is a direct consequence of Remark~\ref{rem:lb} and the fact that $\A$ is a homogeneous partial differential operator ($\A = \A^k$).

	We divide the proof of the upper bound in Theorem~\ref{thm:relax} into several steps. First, we prove that any $\A$-free measure may be area-strictly approximated by asymptotically $\A$-free absolutely continuous measures. Next, we prove the upper bound on absolutely continuous measures, from which the general upper bound follows by  approximation.

	\proofstep{Step~2. An area-strictly converging recovery sequence.}
	Let $\mu \in \Mcal(\Omega;\R^N) \cap \ker \A$. We will show that there exists a sequence $(u_j) \subset \Lrm^1(\Omega;\R^N)$ for which 
	\begin{gather*}
	\text{$u_j \Lcal^d \toweakstar  \mu$ \; in $\Mcal( \Omega;\R^N)$, \quad $\area{u_j \Lcal^d}(\Omega) \to \area{\mu}( \Omega)$,} \\
	\text{and \quad $\A u_j \to 0$ \; in $\Wrm^{-k,q}(\Omega)$}.
	\end{gather*}
	Let $\{\varphi_i\}_{i\in \mathbb \N} \subset \Crm^\infty_c(\Omega)$ be a locally finite 
	partition of unity of $\Omega$. Set
	\[
	\mu_{(i)} := \mu\varphi_i \in \Mcal(\Omega;\R^N),
	\] 
	and 
	\[
	\mu^a_{(i)} := \mu^a\varphi_i  \qquad \mu^s_{(i)} := \mu^s\varphi_i.	
	\]
	where, as usual,
	\[
	\mu^a=\frac{\di \mu} {\di \Lcal ^d} \Lcal ^d\qquad\mbox{and}\qquad\mu^s=\mu-\mu^a. 
	\]
	Note that, with a slight abuse of notation,
	\begin{equation*}\label{L1}
	\biggl\|\sum_{i=1}^j \mu^a_{(i)}-\mu^a\biggr\|_{\Lrm^1(\Omega)}\to 0\quad\text{as \(j\to \infty\)}.
	\end{equation*}
	Furthermore, for fixed \(i\),
	\begin{equation}\label{pd}
	(\mu_{(i)}* \rho_{\eps})\Lcal^d \toweakstar \mu_{(i)},\qquad |\mu_{(i)}* \rho_{\eps}|(\Omega)\le |\mu_{(i)}|(\Omega)= \int_{\Omega} \varphi_i \dd  |\mu|,
	\end{equation}
	and
	\[
	\mu^a_{(i)} * \rho_{\eps}\to   \mu_{(i)}^a \quad \text{in $\Lrm^1(\Omega)$} 
	\qquad
	\text{as \(\eps \to 0\).}
	\]
Moreover,
	\[
	\A(\mu_{(i)} * \rho_{\eps})\to   \A\mu_{(i)} \quad \text{in $\Wrm^{-k,q}(\Omega)$ \quad as \(\eps \to 0\)}.
	\]
	
Fix $j \in \Nbb$. From~\eqref{pd} and the convergence above we might find a sequence $\eps_i(j) \todown 0$ such that the measures $\mu_{i,j} \coloneqq \mu_{(i)} \ast \rho_{\eps_i(j)}$ and $\mu_{i,j}^a \coloneqq \mu^a_{(i)} \ast \rho_{\eps_i(j)}$ verify
\begin{gather*}
d(\mu_{i,j} \Lcal^d,\mu_{(i)}) \le \frac{1}{2^i j}, \\ 
\|\mu^a_{i,j} - \mu^a_{(i)}\|_{\Lrm^1(\Omega)} \le \frac{1}{2^i j}, \\
\|\mu_{i,j} - \mu_{(i)}\|_{\Wrm^{-k,q}(\Omega)} \le \frac{1}{2^i j},
\end{gather*}
where $d$ is the metric inducing the weak* convergence on a suitable subset of $\M(\Omega;\R^N)$ (the existence of the metric $d$ is a standard result for the duals of separable Banach spaces). 
Define the integrable functions (identifying $\mu^a_{i,j}$ with its density)
	\[
	u_j \coloneqq \sum_{i=1}^\infty\mu_{i,j}, \qquad u^a_j \coloneqq \sum_{i=1}^\infty\mu^a_{i,j}.
	\]
	We get
	\[
	d(u_j \Lcal^d,\mu) \le \sum_{i = 1}^\infty d(\mu_{i,j} \Lcal^d,\mu_{(i)}) \le \sum_{i = 1}^\infty \frac{1}{2^i j} = \frac{1}{j},
	\]
	and in a similar way
	\begin{gather*}
	\|u^a_{j} - \mu^a\|_{\Lrm^1(\Omega)} \le \frac{1}{j},\\
	\|\A u_j \|_{\Wrm^{-k,q}(\Omega)} \le \frac{1}{j},
	\end{gather*}
  where we use that $\mu$ is $\A$-free in the second inequality.
	Observe that~\eqref{pd} and the fact that \(\{\varphi_i\}_{i\in \mathbb \N}\) is a partition of unity imply 
	\begin{equation}\label{metric1}
	\int_{\Omega}  |u_j|\dd x\le \sum_{i=1}^\infty \int_{\Omega} \varphi_i\dd  |\mu|\le |\mu|(\Omega).
	\end{equation}
Therefore $\|u_j\|_{\Lrm^1(\Omega)}$ is uniformly bounded and hence
 \begin{gather}
 u_j \Lcal^d \toweakstar \mu \quad \text{in $\M(\Omega;\R^N)$}, \label{L11}\\
 \bigl\|u_j^a-\mu^a\bigr\|_{\Lrm^1(\Omega)}\to 0, \label{L12}\\
 \|\A u_j\|_{\Wrm^{-k,q}(\Omega)} \to 0\label{L11.5}
 \end{gather}
 as $j \to \infty$. Moreover, the weak* lower semicontinuity of the total variation and~\eqref{metric1} imply the strict convergence
 \begin{equation}\label{strict1}
 |u_j \Lcal^d|(\Omega) \to |\mu|(\Omega).
 \end{equation}
 
	Thanks to~\eqref{L11} and~\eqref{L11.5}, to conclude the proof of the claim it suffices to show that 
	\begin{equation}\label{conc}
	 \lim_{j\to \infty} \area{u_j \Lcal^d}(\Omega)= \area{\mu}(\Omega).
	\end{equation} 
Exploiting~\eqref{L11},~\eqref{L12},~\eqref{strict1}, we get
	\begin{equation} \label{lasagne}
	\int_{\Omega}  |u_j-u_j^a|\dd x \to |\mu^s|(\Omega)\quad\text{as \(j\to \infty\)}.
	\end{equation}
	By the inequality \(\sqrt{1+|z|^2}\le \sqrt{1+|z-w|^2}+|w|\) (for \(z,w\in \mathbb R^N\)), we get
	\[
	\area{u_j \Lcal^d}(\Omega)\le \area{u^a_j \Lcal^d}(\Omega)+\int_{\Omega}  |u_j-u_j^a|\dd x.
	\]
 Hence,  again by~\eqref{L12} and~\eqref{lasagne}
	\begin{equation}\label{pizza}
	\limsup_{j\to \infty} \, \area{u_j \Lcal^d}(\Omega)\le \area{\mu}(\Omega).
	\end{equation}
On the other hand, by the weak* convergence \(u_j \Lcal^d\toweakstar \mu\) and the convexity of \(z\mapsto \sqrt{1+|z|^2}\) ,
	 \[
	 \liminf_{j\to \infty} \, \area{u_j \Lcal^d}(\Omega)\ge \area{\mu}(\Omega).
	 \]
	 Thus, together with~\eqref{pizza},~\eqref{conc} follows,  concluding the proof of the claim.

\proofstep{Step~3.a. Upper bound on absolutely continuous fields.} Let us now turn to the derivation of the upper bound for $\overline \Gcal[u] = \overline \Gcal[u \Lcal^d]$ where $u \in \Lrm^1(\Omega;\R^N) \cap \ker \A$. For now let us assume additionally the following strengthening of~\eqref{eq:modulus}:
\begin{equation} \label{eq:modulus_strong}
f(x,A) - f(y,A)\le \omega(|x - y|)(1 + f(y,A))  \qquad
\text{for all $x,y \in {\Omega}$, $A \in \R^N$.}
\end{equation}
It holds that $Q_{\A^k}f(x,\frarg)$ is still uniformly Lipschitz in the second variable and
\begin{equation}\label{eq:modulusQ}
Q_{\Acal}f(x,A) \le Q_{\Acal}f(y,A)  + \omega(|x - y|)(1 + |A|) 
\end{equation}
for every $x, y \in \Omega$ and $A \in \R^N$ with a new modulus of continuity (still denoted by $\omega$), which incorporates another multiplicative constant in comparison to the original $\omega$. Indeed, fix $x,y \in \Omega$, $\eps > 0$, and $A \in \R^N$. Let $w \in \Crm^\infty_\per(Q;\R^N) \cap \ker \A$ be a function with zero mean in $Q$ such that
\[
\int_Q f(y,A + w(z)) \dd z \le Q_{\A}f(y,A) + \eps.  
\]
By assumption, we get
\begin{align*}
\int_Q  f(x,A + w(z))\dd z & \le  \int_Q f(y,A + w(z)) \dd z  \\ 
& \qquad + \omega(|x - y|)\bigg(1 + \int_Q f(y,A + w(z))\dd z\bigg) \\
& \le Q_{\A}f(y,A) + \eps \\ 
& \qquad + \omega(|x - y|)\big(1 + Q_{\A}f(y,A) + \eps\big).
\end{align*}
Thus,
\[
Q_\A f(x,A) \le Q_\A f(y,A) + \eps + \omega(|x - y|)(1 + Q_{\A}f(y,A) + \eps).
\]
The linear growth at infinity of $f$, which is inherited by $Q_\A f$, gives
\[
Q_\A f(x,A) \le Q_\A f(y,A) + \omega(|x - y|)(1 + M(1 + |A|)) + \eps(1 + \omega(|x-y|)).
\]
We may now let $\eps \todown 0$ in the previous inequality to obtain
\[
Q_\A f(x,A) \le Q_\A f(y,A) + \omega(|x - y|)(M+1)(1 + |A|).
\]
This proves~\eqref{eq:modulusQ} provided that~\eqref{eq:modulus_strong} holds.

Fix $m \in \Nbb$ and consider a partition of $\R^d$ of cubes of side length $1/m$. Let  $\{Q_i^m\}_{i = 1}^{L(m)}$ be the maximal collection of those cubes (with centers $\{x_i^m\}_{i =1}^{L(m)}$) that are compactly contained in $\Omega$. 
We have 
\[
\Lcal^d(\Omega)=\sum_{i=1}^{L(m)}\Lcal^d(Q_i^h)+\SmallO_m(1),
\]
where $\SmallO_m(1)\to 0$ as \(m\to \infty\).

%
We may approximate $u$ strongly in $\Lrm^1$ by functions $z^m \in \Lrm^1(\Omega;\R^N)$ that are piecewise constant on the mesh $\{Q_i^m\}_{i = 1}^{L(m)}$ (as $m \to \infty$). More specifically, we may find functions $z^m \in \Lrm^1(\Omega;\R^N)$ 
such that $z^m=0$ on $\Omega\setminus \bigcup_i Q_i^m$,
\begin{equation}\label{male1}
z^m = z^m_i \in \R^N \quad \text{on $Q_i^m$ \qquad and \qquad} \|u - z^m\|_{\Lrm^1(\Omega)} = \SmallO_m(1).
\end{equation}
Additionally, for every $m \in \Nbb$, we may find functions $w^m_i \in \Crm_\per^\infty(Q;\R^N) \cap \ker\A$ with the properties 
\begin{equation}\label{eq:QC}
\int_Q f(x_i^m,z^m_i + w^m_i(y)) \dd y \le Q_{\A}f(x_i^m,z^m_i) + \frac{1}{m},  \qquad
\int_Q w_i^m \dd y= 0.
\end{equation}
Fix $m \in \Nbb$ and let $\phi_m \in \Crm^\infty_c(Q;[0,1])$ be a function such that
\begin{equation}\label{male2}
\sum_{i = 1}^{L(m)} \|1 - \varphi_m\|_{\Lrm^1(Q)} \|w_i^m\|_{\Lrm^1(Q)}\leq \frac{1}{m},
\end{equation}
We define the functions
\[
v_{j}^{m} \coloneqq \sum_{i = 1}^{L(m)}\mathds \varphi_m(m(x - x_i^m)) \cdot w^m_i(jm(x - x_i^m)) \qquad x \in \Omega, \; j \in \Nbb.
\]
{By Lemma~\ref{lem:homogenization}, the sequence $(v^m_j)_j$ generates the Young measure 
\[\nu^m = (\nu_x^m,0,\frarg) \in \Y(\Omega;\R^N),\]
 where for each $x \in \Omega$, $\nu_x^m$ is the probability measure defined by duality trough
 \[
 \dprb{h,\nu^m_x} \coloneqq \sum_{i = 1}^{L(m)} \mathds 1_{Q_i^m}(x) \int_Q h(\phi_m(m(x - x_i^m)) \cdot w_i^m(y)) \dd y, 
 \]
 on functions $h \in \Crm(\R^N)$ with linear growth.
%

The central point of this construction is that $w_i^m$ has \emph{zero mean value}, that is, $\int_Q w_i^m \dd y = 0$, whereby it follows that
\begin{equation}\label{eq:avg}
v_j^m \Lcal^d \quad \toweakstar  \quad  \sum_{i = 1}^{L(m)} \int_Q \phi_m(m(x - x_i^m)) \cdot w_i^m(y) \dd y \;  =  \; 0 \quad \text{in $\M(\Omega;\R^N)$}
\end{equation}
as $j \to \infty$.
Recall that by construction, $\A w_i^m = 0$ on $Q$, Hence, using that $\A$ is homogeneous we get
\[
  \A [w_i^m(jm \,(\frarg - x_i^m))] = 0 \quad \text{in the sense of distributions on $Q_i^m$}.
\]
Thus, for some coefficients $c_{\alpha,\beta} \in \Nbb$, using the short-hand notation $\psi_m(y) \coloneqq \phi_m(my)$ yields
\begin{align*}
	\A v^m_j & = \sum_{i = 1}^{L(m)}  \bigg(\A[w_i^m(jm \,(\frarg - x_i^m))] \psi_m(\frarg - x_i^m)\\ 
	& \qquad + \sum_{\substack{|\alpha| = k,\\1 \le |\beta| \le k}} c_{\alpha \beta} A_\alpha\partial^{\alpha - \beta} [w_i^m(jm\,(\frarg - x_i^m))] \partial^\beta [\psi_m(\frarg - x_i^m)]\bigg) \\
	& = \sum_{\substack{|\alpha| = k,\\1 \le |\beta| \le k}} \bigg(  \sum_{i = 1}^{L(m)}c_{\alpha \beta} \partial^{\alpha - \beta} [w_i^m(jm\,(\frarg- x_i^m))] \partial^\beta [\psi_m(\frarg - x_i^m)]\bigg)
	\end{align*}
in the sense of distributions on $\Omega$. Applying Lemma \ref{lem:homogenization} to the sequence $(w_i^m(jm (\frarg- x_i^m)))_j$ on each cube $Q_i^m$ we get
\begin{align*}
\sum_{i= 1}^{L(m)} \mathds 1_{Q_i^m}\cdot w_i^m(jm(\frarg- x_i^m)) & \toweak \sum_{i= 1}^{L(m)} \mathds 1_{Q_i^m}\cdot\dashint_{Q_i^m} w_i^m(m(y - x_i^m)) \dd y \\ 
& = \sum_{i= 1}^{L(m)} \mathds 1_{Q_i^m}\cdot \int_{Q} w_i^m(y) \dd y \\
&= 0. 
\end{align*}
Hence,~\eqref{eq:avg} and the compact embedding $\Lrm^1(\Omega;\R^N) \cembed \Wrm^{-1,q}(\Omega;\R^N)$ yield
\begin{align*}
\A v_j^m  = \sum_{\substack{|\alpha| = k,\\1 \le |\beta| \le k}} \bigg(\sum_{i = 1}^{L(m)}c_{\alpha \beta} \partial^{\alpha - \beta} [w_i^m(jm(\frarg-x_i^m))]\partial^\beta[\psi_m(\frarg - x_i^m)]\bigg)  \to 0 
\end{align*}
strongly in $\Wrm^{-k,q}(\Omega;\R^N)$, as $j \to \infty$.\\


For later use we record: 

\begin{remark}\label{rem:project} By construction, for every $m,j \in \Nbb$, the function $v_j^m$ is compactly supported in $\Omega$. Up to re-scaling, we may thus assume without loss of generality that $\Omega \subset Q$ and subsequently make use of Lemma~\ref{lem:boundary values} on the $j$-indexed sequence $(\tilde v_j^m)$ with $m$ fixed, where $\tilde v_j^m$ is the zero extension of $v_j^m$ to $Q$, to find another sequence $(V^m_j) \subset \Lrm^1(\Omega;\R^N) \cap \ker \A$ generating the same Young measure $\nu^m$ (as $j \to \infty$).

\end{remark}

In the next calculation we use the Lipschitz continuity of $Q_{\A}f(x,\frarg)$ in the second variable, equation~\eqref{male1} and the fact that the sequence $(v_j^m)$ generates the Young measure $\nu^m$ as $j\to\infty$, to get
\begin{align}
\lim_{j \to \infty} \Gcal[u + v_j^m] & = \lim_{j \to \infty} \Gcal[z^m + v_j^m] + \SmallO_m(1) \notag\\
& = \sum_{i = 1}^{L(m)} \int_{Q_i^m} \int_Q f \bigl( x,z_i^m + \varphi_m(m(x-x_i^m))\cdot w_i^m(y) \bigr) \dd y \dd x+ \SmallO_m(1). \label{eq:limsup}
\end{align}
By a change of variables we can estimate every double integral times $m^d = \Lcal^d(Q_i^m)^{-1}$ on the last line on each cube of the mesh:
\begin{align}
\dashint_{Q_i^m} \int_Q & f \bigl( x,z_i^m + \varphi_m(m(x-x_i^m)) \cdot w_i^m(y) \bigr) \dd y \dd x  \notag\\  
&= \int_{Q} \int_Q f \bigl(x_i^m+m^{-1}x,z_i^m + \varphi_m(x)\cdot w_i^m(y) \bigr) \dd y \dd x  \notag\\
& \le  \int_{Q} \int_Q f \bigl(x_i^m+m^{-1}x,z_i^m + w_i^m(y) \bigr) \dd y \dd x + L\|1 - \varphi_m\|_{\Lrm^1(Q)} \|w_i^m\|_{\Lrm^1(Q)}  \notag\\
& \leq \dashint_{Q^m_i} \int_Q f(x,z_i^m + w_i^m(y)) \dd y \dd x + L\|1 - \varphi_m\|_{\Lrm^1(Q)} \|w_i^m\|_{\Lrm^1(Q)}  \notag\\
& \coloneqq I_i^m + II^m_i,  \label{male3}
\end{align}
where here $L$ is the $x$-uniform Lipschitz constant of $f$ with respect to the second argument. Using the modulus of continuity of $f$ from~\eqref{eq:modulus_strong},~\eqref{eq:QC} (twice), and~$Q_{\A}f \leq f$, we get
\begin{align}
I_i^m  & \le \dashint_{Q^m_i} \int_Q f(x_i^m,z_i^m + w_i^m(y)) \dd y \dd x + \omega(m^{-1})\bigg(1 + \int_Q f(x_i^m,z_i^m + w_i^m(y)) \dd y\bigg)  \notag\\
& \le  Q_{\A}f(x_i^m,z^m_i) + \omega(m^{-1})(1 + f(x_i^m,z_i^m)) + \SmallO_m(1).  \label{male4}
\end{align}
Additionally, by~\eqref{male2} 
\begin{equation}\label{male5}
 \sum_{i = 1}^{L(m)} \mathcal L^d(Q^m_i) II^m_i = \SmallO_m(1).
\end{equation}
Returning to~\eqref{eq:limsup}, we can employ~\eqref{eq:modulusQ},~\eqref{male3},~\eqref{male4} and~\eqref{male5} to further estimate 
\begin{align*}
\lim_{j \to \infty} &\Gcal[u + v_j^m] \\
&  \le \sum_{i = 1}^{L(m)} \left\{\int_{Q_i^m} Q_{\A}f(x_i^m,z_i^m) \dd x + \omega(m^{-1})\left(\int_{Q_i^m} 1 + f(x_i^m,z_i^m) \dd x\right)\right\} + \SmallO_m(1)\\
&  \le \sum_{i = 1}^{L(m)} \left\{\int_{Q_i^m} Q_{\A}f(x_i^m,z_i^m) \dd x + C\omega(m^{-1})\left(\int_{Q_i^m}1 + |z^m_i| \dd x\right)\right\} +\SmallO_m(1)\\
&  \le \sum_{i = 1}^{L(m)} \left\{\int_{Q_i^m} Q_{\A}f(x,z_i^m) \dd x + C\omega(m^{-1})\left(\int_{Q_i^m}1 + |z^m_i| \dd x\right)\right\} +\SmallO_m(1)\\
& \le \int_\Omega Q_{\A}f(x,z^m) \dd x +  C\omega(m^{-1})\big(\|1+|z^m|\|_{\Lrm^1(\Omega)}\big) + \SmallO_m(1) \\
& =  \int_\Omega Q_{\A}f(x,u(x)) \dd x +  \SmallO_m(1),
\end{align*}
where $C > 0$ and $\SmallO_m(1)$ may change from line to line.
Here, we have used the (inherited) Lipschitz continuity of $Q_{\A}f(x,\frarg)$ in the second variable and the fact that $\|u-z^m\|_{\Lrm^1(\Omega)} = \SmallO_m(1)$ to pass to the last equality. Hence,
%
\begin{equation}\label{eq:UBabs}
\overline\Gcal[u] \le \inf_{m > 0} \; \lim_{j \to \infty} \Gcal[u + v_j^m] \le \int_{\Omega} Q_{\A}f(x,u(x)) \dd x.
\end{equation}

\proofstep{Step~3.b. The upper bound.}
Fix $\mu \in \M(\Omega;\R^N) \cap \ker \A$. By Step~2 we may find a sequence $(u_j) \subset \Lrm^1(\Omega;\R^N)$ that area-strictly converges to $\mu \in \M(\Omega;\R^N)$ with $\A u_j \to 0$ in $\Wrm^{-k,q}$. Hence, by~\eqref{eq:UBabs}, Remark~\ref{rem:strict} and Corollary~\ref{cor:representation},
\begin{align*}
\overline\Gcal[\mu] & \le \liminf_{j \to \infty } \; \overline\Gcal[u_j]  \notag\\
& \le  \limsup_{j \to \infty } \; \ddprb{Q_{\A}f(x,\frarg),\delta[u_j \Lcal^d]}  \notag\\
& \le  \int_\Omega \dprb{Q_{\A}f(x,\frarg),\delta[\mu]_x} \dd x + \int_{\Omega} \dprb{(Q_{\A}f)^\#(x,\frarg),\delta[\mu]^\infty_x} \dd \lambda_{\delta[\mu]}(x) \notag\\
& = \int_\Omega Q_{\A}f\bigg(x,\frac{\dd \mu}{\dd \Lcal^d}(x)\bigg) \dd x + \int_\Omega (Q_{\A}f)^\#\bigg(x,\frac{\dd \mu^s}{\dd |\mu^s|}(x)\bigg) \dd |\mu^s|(x) \notag\\
& = \Gcal_*[\mu].
\end{align*}


\proofstep{Step~4. General continuity condition.}
It remains to show the upper bound in the case where we only  have~\eqref{eq:modulus} instead of~\eqref{eq:modulus_strong}. As in the previous step, it  suffices to show the upper bound on absolutely continuous fields. We let, for fixed $\eps > 0$,
\[
  f^\eps(x,A) := f(x,A) + \eps \abs{A},
\]
which is an integrand satisfying~\eqref{eq:modulus_strong}. Denote the corresponding functionals with $f^\eps$ in place of $f$ by $\Gcal^\eps, \Gcal^\eps_*, \overline{\Gcal^\eps}$. Then, by the argument in Steps~1--3,
\begin{equation*}\label{eq:artificial}
  \Gcal^\eps_* = \overline{\Gcal^\eps}.
\end{equation*}
We claim that
\begin{equation} \label{eq:QA_conv}
Q_{\Acal^k}f^\eps  \todown Q_{\Acal^k}f  \qquad \text{pointwise in $\Omega \times \R^N$}.
\end{equation}
To see this first notice that $\eps \mapsto Q_{\Acal^k}f^\eps(x,A)$ is monotone decreasing for all $x \in \Omega$, $A \in \R^N$, and
\[
  Q_{\A^k} f  + \eps\abs{\frarg} \le  Q_{\A^k} f^\eps
  \leq f+ \eps\abs{\frarg},
\]
which is a simple consequence of Jensen's classical inequality for $\abs{\frarg}$. It follows that the limit
\[
g(x,A) \coloneqq \inf_{\eps > 0} Q_{\Acal^k}f^\eps(x,A) = \lim_{\eps \todown 0}Q_{\Acal^k}f^\eps(x,A)
\]
defines an upper semicontinuous function $g:\Omega\times \R^N \to \R$ with bounds
\[
 Q_{\A^k} f   \le  g \le f.
\]
Furthermore, by the monotone convergence theorem, it is easy to check that $g$ is $\Acal^k$-quasiconvex, whereby $g = Q_{\Acal^k}f$ (see Corollary~\ref{lem:qce_property}).

Let us now return to the proof of the upper bound on absolutely continuous fields. By construction,
\begin{equation}\label{eq:artificial}
\overline \Gcal \le \overline{\Gcal^\eps} = \Gcal_*^\eps.
\end{equation}
The monotone convergence theorem and~\eqref{eq:QA_conv} yield
\[
\overline \Gcal[u] \le \Gcal_*[u \Lcal^d] \qquad \text{for all $u \in \Lrm^1(\Omega;\R^N) \cap \ker \A$},
\] 
after letting $\eps \todown 0$ in~\eqref{eq:artificial}. 

The general upper bound then follows in a similar way to the proof under the assumption~\eqref{eq:modulus_strong}.
This finishes the proof.\qed

\subsection{Proof of Theorem~\ref{thm:relax2}}

The proof works the same as the proof of Theorem~\ref{thm:relax} with the following additional comments:


\proofstep{Step~1. The lower bound.} Since restricting to $\A$-free sequences is a particular case of 
the more general convergence $\A u_n \to 0$ in the space $\Wrm^{-k,q}(\Omega;\R^N)$, we can still apply Step~2 in the proof of Theorem~\ref{thm:relax} to prove that
$\Gcal_* \le \overline{\Gcal}$, where for $\mu \in \M(\Omega;\R^N) \cap \ker \A$,
\[
\Gcal_*[\mu] \coloneqq \int_\Omega Q_{\A}f \bigg(x,\frac{\dd \mu}{\dd \Lcal^d}(x)\bigg) \dd x + \int_\Omega (Q_{\A}f)^\#\bigg(x,\frac{\dd \mu^s}{\dd |\mu^s|}(x)\bigg) \dd |\mu^s|(x).
\]

\proofstep{Step~2. An $\A$-free strictly convergent recovery sequence.} In this case, this forms part of the assumptions. 

\proofstep{Step~3.a. Upper bound on absolutely continuous $\A$-free fields.} An immediate consequence of Remark~\ref{rem:project} is that one may assume, without loss of generality, that the recovery sequence for the upper bound lies in $\ker \A$. Thus, the upper bound on absolutely continuous fields in the constrained setting also holds.

\proofstep{Step~3.b. The upper bound (assuming~\eqref{eq:modulus_strong}).} The proof is the same as in the proof of Theorem~\ref{thm:relax}.

\proofstep{Step~4. General continuity condition}. Since assumption~\eqref{eq:modulus_strong} is a structural property (coercivity) of the integrand and the arguments do not depend on the underlying space of measures, the argument
remains the same as in the proof of Theorem~\ref{thm:relax}. \qed

\appendix
\section{Proofs of the localization principles}

In this appendix we prove Proposition~\ref{prop: localization regular} and Proposition~\ref{prop: localization}.

\medskip

\proofstep{Proof of Proposition~\ref{prop: localization regular}:} In the following we adapt the main steps in proof of the localization principle at regular points which is contained in Proposition 1 of \cite{Rindler12BV}. The statement on the existence of an $\A$-free and periodic generating sequence is proved in detail.
   
   Let $\mu_j \in \M(\Omega;\R^N)$ be the sequence of asymptotically $\A$-free measures which generates $\nu$. In the following steps, for an open $\Omega' \subset \R^d$, we will often identify a measure $\mu \in \M(\Omega';\R^N)$ with its zero extension in $\M_\loc(\R^d;\R^N)$, and similarly for a Young measure $\sigma \in \Y(\Omega';\R^N)$ and its zero extension in $\Y_\loc(\R^d;\R^N)$.
   
  \proofstep{Step~1.} We start by showing that, for every $r > 0$, there exists a subsequence of $j$'s (the choice of subsequence might depend on $r$) such that 
  \begin{equation}\label{eq:principles1}
  r^{-d}T^{(x_0,r)}_\# \mu_j \toY \sigma^{(r)} \quad \text{in $\Y_\loc(\R^d;\R^N)$}.
  \end{equation}
  Moreover, for $\Lcal^d$-a.e. $x_0 \in \Omega$, one can show that a uniform bound 
  \begin{equation}
  \sup_{r>0} \, \ddprb{\mathds 1_K \otimes |\frarg|, \sigma^{(r)}} < \infty \quad \text{for every $K \Subset \R^d$}
  \end{equation}
holds; thus, by Lemma \ref{lem:YM_compact}, there exists a sequence of positive numbers $r_m \todown0$ and a Young measure $\sigma$ for which
  \[
  \sigma^{(r_m)} \toweakstarY \sigma \quad \text{in $\Y_\loc(\R^d;\R^N)$}.
  \]

 \proofstep{Step~2.} For an arbitrary measure $\gamma \in \M(\Omega;\R^N)$, the Radon-Nykod\'ym differentiation theorem yields
 \begin{align*}
r^{-d}T^{(x_0,r)}_\# \gamma = \frac{\dd \gamma}{\dd \Lcal^d}(x_0 + r\,\frarg) \, \Lcal^d + \frac{\dd \gamma}{\dd |\gamma^s|}(x_0 + r \, \frarg) \, r^{-d}T^{(x_0,r)}_\#|\gamma^s|.
 \end{align*}
Consider $\sigma^{(r)}$ as an element of $\Y(Q;\R^N)$. Fix $\phi \otimes h \in \Crm(\cl Q) \times \Lip(\R^N)$. Using a simple change of variables, we get
  \begin{equation}\label{eq:principles2}
  \begin{split}
  \ddprb{\phi \otimes h,\sigma^{(r)}} 
  & = \lim_{j \to \infty} \bigg(\int_{Q} \phi(y) \cdot h\bigg(\frac{\dd \mu_j}{\dd \Lcal^d}(x_0 + ry)\bigg) \dd y  \\ 
  & \qquad + \int_{\cl Q} \phi(y) \cdot h^\infty\bigg(\frac{\dd \mu_j}{\dd |\mu_j^s|}(x_0 + ry)\bigg) \dd (r^{-d}T^{(x_0,r)}_\#|\mu_j^s|)(y) \bigg) \\
  & = r^{-d}  \lim_{j \to \infty}\bigg(\int_{Q_r(x_0)} \phi \circ T^{(x_0,r)}(x) \cdot h\bigg(\frac{\dd \mu_j}{\dd \Lcal^d}(x)\bigg) \dd x \\
    & \qquad + \int_{\cl {Q_r(x_0)}} \phi \circ T^{(x_0,r)}(x) \cdot h^\infty\bigg(\frac{\dd \mu_j}{\dd |\mu_j^s|}x\bigg) \dd |\mu_j^s|(x) \bigg) \\
  & = r^{-d} \ddprb{(\phi \circ T^{(x_0,r)})\otimes h, \nu}.
\end{split}
  \end{equation}

  \proofstep{Step~3.}  We now let $r = r_m$ in~\eqref{eq:principles2} and quantify its values as $m \to \infty$. This will allow us to characterize $\sigma$ in terms of $\nu$.
  
   Let $\{g_l \coloneqq \phi_l \otimes h_l\}_l \subset \Crm(\cl Q) \times \Lip(\R^N)$ be the dense subset of $\E(Q;\R^N)$ provided by Lemma \ref{lem:separation} and further assume that $x_0$ verifies the following properties: $x_0$ is a Lebesgue point of the functions
  \begin{equation}\label{eq:principles3}
x \mapsto \dprb{h_l,\nu_x} + \dprb{h^\infty_l,\nu^\infty_x}\frac{\dd \lambda_\nu}{\dd \Lcal^d}(x), \qquad \text{for all $l \in \Nbb$},
  \end{equation}
  and $x_0$ is a regular point of the measure $\lambda_\nu$, that is,
  \begin{equation}\label{eq:principles4}
  \frac{\dd \lambda_\nu^s}{\dd \Lcal^d}(x_0) = \lim_{r \todown 0} \frac{\lambda_\nu^s(Q_r(x_0))}{r^d} = 0.
  \end{equation}
  
 Consider $\sigma$ as an element of $\Y(Q;\R^N)$. Setting $r = r_m$ in~\eqref{eq:principles2} and letting $m \to \infty$ we get
  \begin{align*}
    \ddprb{g_l,\sigma} & = \lim_{m \to \infty} \, r_m^{-d} \ddprb{\phi_l \circ T^{(x_0,r_m)} \otimes h_l, \nu}\\
    & =  \lim_{m \to \infty} \, \bigg(  \dashint_{Q_{r_m}(x_0)} \phi_l\bigg(\frac{x - x_0}{r_m}\bigg)\bigg[\dprb{h_l,\nu_{x}} +  \dprb{h^\infty_l,\nu^\infty_{x}} \frac{\dd \lambda_\nu}{\dd \Lcal^d}(x)\bigg] \dd x \\
    &  \qquad  + \frac{1}{r^d}\int_{\cl{Q_{r_m}(x_0)}} \phi_l\bigg(\frac{x - x_0}{r_m}\bigg)\dprb{h_l^\infty,\nu_{x}^\infty} \dd \lambda^s_\nu(x) \bigg) \\
    & = \int_{Q} \dprb{g_l(y,\frarg),\nu_{x_0}} \dd y + \int_{Q} \dprb{g_l^\infty(y,\frarg),\nu^\infty_{x_0}}\frac{\dd \lambda_\nu}{\dd \Lcal^d}(x_0) \dd y.
  \end{align*}
  Here, we have used~\eqref{eq:principles3} and the Dominated Convergence Theorem to pass to the limit in the first summand, and with the help of~\eqref{eq:principles4}, we used that
  \[
\int_{\cl{Q_{r}(x_0)}} \phi_l\bigg(\frac{x - x_0}{r}\bigg)\dprb{h_l^\infty,\nu_{x}^\infty} \dd \lambda^s_\nu(x) \le \|\phi\|_\infty\cdot\text{Lip}(h_l) \cdot \lambda^s_\nu(Q_r(x_0)) = \SmallO(r^d)
  \]
to neglect the second summand in the limiting process.

%
%
Since the set $\{g_l\}$ separates $\Y(Q;\R^N)$, Lemma \ref{lem:separation} tells us that $\sigma_y = \nu_{x_0}, \sigma^\infty_y = \nu^\infty_{x_0}, \lambda_\sigma = \frac{\dd \lambda_\nu}{\dd \Lcal^d}(x_0) \Lcal^d$ for $\Lcal^d$a-e. $y \in Q$, and that $\lambda^s_\sigma$ is the zero measure in $\M(\cl Q)$, as desired. 

\proofstep{Step~4.}  We use a diagonalization principle (where $j$ is the fast index with respect to $m$) to find a subsequence $(\mu_{j(m)})$ such that
	\begin{equation}\label{eq:LPR1}
	\gamma_m \coloneqq r_m^{-d}T^{(x_0,r_m)}_\# \mu_{j(m)} \toY \sigma \quad \text{in $\Y_\loc(\R^d;\R^N)$}. 
	\end{equation}
	
\proofstep{Step~5.}  Up to this point, the localization principle presented in Proposition 1 of \cite{Rindler12BV} has been adapted to Young measures without imposing any differential constraint.    
Here we additionally require $\sigma$ to be an $\A^k$-free Young measure; this is achieved by showing that $(\gamma_m)$ is asymptotically $\A^k$-free (on bounded subsets of $\R^d$). To this end let us note that 
\[
\A\mu_j=\theta_j
\]
with \(\|\theta_{j}\|_{W^{-k,q}}\to 0\) as \(j\to \infty\). By scaling  we can write
\[
\A^k\gamma_m=r_m^{-d} \A^k (T^{x_0,r_m}_\# \mu_{j(m)})=- \sum_{h = 0}^{k-1} \A^h\big(r^{k -h}r_{m}^{-d}T^{(x_0,r)}_\# \mu_{j(m)}\big)+r_m^{k-d}T^{(x_0,r_m)}_\#\theta_{j(m)}.
\]
Since \(\|r_m^{k-d}T^{(x_0,r_m)}_\#\theta_{j}\|_{W^{-k,q}}\le C(m) \|\theta_{j}\|_{W^{-k,q}}\) and for every open $U \Subset \R^d$ there exists a positive constant $C_U$ such that
	 \[
	  \sup_{m\in\N} r^{-d}_mT^{(x_0,r_m)}_\# |\mu_{j}|(U) \le C_U,
	 \]
arguing as in Proposition~\ref{lem:nonh} we can choose a further subsequence \(j(m)\to \infty\) such that
\[
\A^k\gamma_m=\A^k \big(r_m^{-d}T^{(x_0,r)}_\# \mu_{j(m)}\big)\to 0\qquad\textrm{in \(\Wrm^{-k,q}_{\loc}(\R^d)\)}
\] 
and this shows that  $\sigma$ is an $\A^k$-free Young measure.
%
%
%
%

	\proofstep{Step~6.}  So far we have shown that $[\sigma] = A_0 \Lcal^d$ with
\[
A_0 := \dprb{ \id, \nu_{x_0} } + \dprb{ \id, \nu_{x_0}^\infty } \frac{\di \lambda_\nu}{\di \Lcal^d}(x_0)  \quad \in \R^N,
\]
and that $\sigma$ is generated by a sequence $(\mu_j) \subset \M(Q;\R^N)$ satisfying $\A^k \mu_j \to 0$. Note that without loss of generality we may assume 
that the $\mu_j$'s are of the form $u_j \Lcal^d$ where $u_j \in \Lrm^1(Q;\R^N)$. Indeed, since
\begin{gather*}
\gamma_R \coloneqq T^{(x_0,R)}_\# \mu_j \to \mu_j \; \quad \text{area strictly in $\M_\loc(\R^d;\R^N)$}, \\
\|\A^k(\gamma_R - \mu_j)\|_{\Wrm^{-k,q}_\loc(\R^d)} \to 0 \quad \text{as $R \toup 1$},  
\end{gather*}
and
\begin{gather*}
\gamma_R \ast \rho_{\eps} \to \gamma_R \; \quad \text{area strictly in $\M_\loc(\R^d;\R^N)$}, \\
 \|\A^k(\gamma_R - \gamma_R \ast \rho_{\eps})\|_{\Wrm^{-k,q}_\loc(\R^d)} \to 0 \quad \text{as $\eps \todown 0$},
\end{gather*}
we might use a diagonalization argument (relying on the weak*-metrizability of bounded subsets of $\E(Q;\R^N)^*$ and Remarks~\ref{rem:area},~\ref{rem:strict}),  where $\eps$ appears as the faster index with respect to $R$, to find a sequence with elements $u_j \coloneqq \gamma_{R(j)}\ast\rho_{\eps(R(j))}$ such that
\begin{gather}\label{WLOG}
u_j \Lcal^d \toY \sigma \in \Y_\loc(\R^d;\R^N) \quad \text{and} \quad \A^k u_j \to 0 \quad \text{in $\Wrm^{-k,q}_\loc(\R^d)$}.
\end{gather}
Using~\eqref{eq:both}, we get 
\[
|u_j|\Lcal^d \toweakstar |[\sigma]| = |A_0| \Lcal^d\quad \text{in $\M_\loc(\R^d)$}.
\]
Hence, $|u_j|\Lcal^d \toweakstar \Lambda$ in $\M(\cl Q)$ with $\Lambda(\partial Q) = 0$.
We are now in position to apply Lemma~\ref{lem:boundary values} 
to the sequences $(u_j)$ and $(v_j := A_0)$ to find a sequence $z_j \in \Crm^\infty_\per(Q;\R^N) \cap \ker \A^k$ with 
$\int_{Q} z_j \dd y = 0$ and such that (up to taking a subsequence)
\begin{equation}\label{eq: promedio}
(A_0 + z_j)\Lcal^d \toY \sigma \quad \text{in $\Y(Q;\R^N)$}.
\end{equation}
Since the properties of $x_0$ that were involved in Steps 1-3 are valid at $\Lcal^d$-a.e. $x_0 \in \Omega$, 
the sought localization principle at regular points is proved.\qed

\medskip

 \proofstep{Proof of Proposition~\ref{prop: localization}:} The proof of the localization principle at singular points resembles the one for regular points, with a few exceptions:

 \proofstep{Step~1.}  
In comparison to Step~1 from the regular localization principle, we here chose $c_r(x_0) \coloneqq |\lambda_{\nu}^s|(Q_r(x_0))^{-1}>0$  and we define \(\sigma^r\) as 
 \[
c_r(x_0)T^{(x_0,r)}_\# \mu_j \toY \sigma^{(r)} \quad \text{in $\Y_\loc(\R^d;\R^N)$}.
 \]
 Moreover, by \cite[Lemma 2.4 and Theorem 2.5]{Preiss87} and~\eqref{eq:principles6} below, at $\lambda^s_\nu$-a.e. $x_0 \in \Omega$, it is possible to show that
  \begin{equation}
  \sup_{r> 0} \, \ddprb{\mathds 1_K \otimes |\frarg|, \sigma^{(r)}} = \sup_{r> 0} \frac{|\lambda_\nu^s|(x_0+rK)+\Lcal^d(x_0+rK)}{ |\lambda_{\nu}^s|(Q_r(x_0))}< \infty \quad \text{for every $K \Subset \R^d$}.
  \end{equation}
By compactness of $\Y_\loc(\R^d;\R^N)$, see~Lemma \ref{lem:YM_compact}, there exists a sequence of positive numbers $r_m \todown0$ and a Young measure $\sigma$ for which
  \[
  \sigma^{(r_m)} \toweakstarY \sigma \quad \text{in $\Y_\loc(\R^d;\R^N)$}
  \]
 
 \proofstep{Step~2.}  The calculations of the second step, for the constant $c_r(x_0)$, is
\begin{align}\label{notag}
  \ddprb{\phi \otimes h,\sigma^{(r)}} 
  \notag
  & = \lim_{j \to \infty} \bigg(\int_{Q} \phi(y) \cdot h\bigg(c_r(x_0) r^{d}\frac{\dd \mu_j}{\dd \Lcal^d}(x_0 + ry)\bigg) \dd y  \\ \notag
  & \qquad + \int_{\cl Q} \phi(y) \cdot h^\infty\bigg(c_r(x_0) r^{d}\frac{\dd \mu_j}{\dd |\mu_j^s|}(x_0 + ry)\bigg) \dd (r^{-d}T^{(x_0,r)}_\#|\mu_j^s|)(y) \bigg) \\\notag
  & = r^{-d} \lim_{j \to \infty}\bigg(\int_{Q_r(x_0)} \phi \circ T^{(x_0,r)}(x) \cdot h\bigg(c_r(x_0) r^d \frac{\dd \mu_j}{\dd \Lcal^d}(x)\bigg) \dd x \\\notag
    & \qquad + \int_{\cl{Q_r(x_0)}} \phi \circ T^{(x_0,r)}(x) \cdot h^\infty\bigg(c_r(x_0) r^{d}\frac{\dd \mu_j}{\dd |\mu_j^s|}(x)\bigg) \dd |\mu_j^s|(x) \bigg)\\
  & = r^{-d}\ddprb{\phi \circ T^{(x_0,r)}\otimes h(c_r(x_0) r^d \frarg), \nu}.
  \end{align}
  
 \proofstep{Step~3.}  The assumptions of the third step are substituted by assuming that $x_0$ is a $\lambda_\nu^s$-Lebesgue point of the functions
 \begin{equation}\label{eq:principles5}
 x \mapsto \dprb{|\frarg|,\nu_x^\infty}, \quad x \mapsto \dprb{h_l^\infty,\nu_x^\infty} \quad \text{for all $l \in \Nbb$}.
 \end{equation}
We further require that
 \begin{equation}\label{eq:principles6}
\lim_{r \todown 0} \frac{r^d}{\lambda^s_\nu(Q_r(x_0))} = \lim_{r \todown 0}  c_r(x_0) r^d = 0
 \end{equation}
 and that
 \begin{equation}\label{eq:principles7}
\lim_{r \todown 0} c_r(x_0)\int_{Q_{r}(x_0)} \bigg[\dprb{|\frarg|,\nu_{x}} +  \dprb{|\frarg|,\nu^\infty_{x}} \frac{\dd \lambda_\nu}{\dd \Lcal}(x) \bigg] \dd x=0.
 \end{equation}
Hence, defining  $S \coloneqq \setb{x_0 \in \Omega}{\text{\eqref{eq:principles5},~\eqref{eq:principles6} and~\eqref{eq:principles7} hold}}$, we have  $\lambda^s_\nu(\Omega\setminus S)=0$.
 
 Fix $x_0 \in S$. 
 Setting $r = r_m$ in~\eqref{notag} and letting $m \to \infty$ gives
 \begin{align*}
    \ddprb{\mathds 1_Q \otimes |\frarg|,\sigma} & = \lim_{m \to \infty} \, \ddprb{\mathds 1_Q  \otimes|\frarg|,\sigma^{(r_m)}} \\
     & = \lim_{m \to \infty} \, c_{m}(x_0)\bigg(  \int_{Q_{r_m}(x_0)} \bigg[\dprb{|\frarg|,\nu_{x}} +  \dprb{|\frarg|,\nu^\infty_{x}} \frac{\dd \lambda_\nu}{\dd \Lcal}(x) \bigg] \dd x\\
    &  \qquad  + \int_{\cl{Q_{r_m}(x_0)}} \dprb{|\frarg|,\nu_{x}^\infty} \dd  \lambda^s_\nu(x) \bigg) \\
    & =  \dprb{|\frarg|,\nu^\infty_{x_0}} \lim_{m \to \infty} \bigg(\int_{\cl Q} \dd (c_m(x_0) T^{(x_0,r_m)}_\# \lambda^s_\nu)(y)\bigg)\\ 
    & =  \int_{\cl Q}\dprb{|\frarg|,\nu_{x_0}^\infty} \dd \gamma(y), \quad \text{for some $\gamma \in \Tan(\lambda^s_\nu,x_0)$},
  \end{align*}
  where we have used that $x_0 \in S$. Moreover 
  \[
  \gamma (\overline Q)=   \ddprb{\mathds 1_{\overline Q} \otimes |\frarg|,\sigma} \ge \lim_{m\to \infty} \frac{|\lambda^s_\nu|(Q_{r_m}(x_0))}{|\lambda^s_\nu|(Q_{r_m}(x_0))}=1
  \]
  which implies \(\gamma\ne 0\).  Testing  with $g_l = \phi_l\otimes h_l$, we obtain by~\eqref{eq:principles5} and a similar argument to the  one above, that
 \begin{align*}
    \ddprb{g_l,\sigma} = \int_{\cl Q} \phi_l(y)\dprb{h_l^\infty,\nu^\infty_{x_0}} \dd\gamma(y).
  \end{align*}    
  From the above  equations we deduce that  that $\sigma_y = \delta_0$ for $\Lcal^d$-a.e. $y \in Q$, $\sigma^\infty_y = \nu^\infty_{x_0}$ and $\lambda_\sigma=\gamma \in \Tan(\lambda_\nu^s,x_0)\setminus\{0\}$.
  
 \proofstep{Step~4.}  The arguments of Step 4 remain unchanged except that this time one gets
 \[
 \gamma_m \coloneqq c_m T^{(x_0,r_m)}_\# \mu_{j(m)} \toY \sigma \quad \text{in $\Y(Q,\R^N)$};
 \]

 \proofstep{Step~5.} This is similar to the corresponding step in the proof of the regular localization principle.

 \proofstep{Step~6.}  Differently from the case at regular points, we want to additionally show $\lambda_\sigma(Q) = 1$ and $\lambda_\sigma(\partial Q) = 0$. There exists $0 <  \eps < 1$ such that $\lambda_\sigma(\partial Q_\eps)=0$. Up to taking $r' = \eps r$ (and thus  as $r_m' = r_m\eps$) in the arguments of Steps 1-4 above we may assume without loss of generality that 
 $\lambda_\sigma(\partial Q) = 0$ and $\lambda_\sigma(Q) = 1$.
This proves the localization principle at singular points.\qed


\begin{thebibliography}{BCMS13}

\bibitem[AB97]{AlibertBouchitte97}
J.~J. Alibert and G.~Bouchitt{\'e}, \emph{Non-uniform integrability and
  generalized {Y}oung measures}, J. Convex Anal. \textbf{4} (1997), 129--147.

\bibitem[AD92]{AmbrosioDalMaso92}
L.~Ambrosio and G.~{Dal Maso}, \emph{On the relaxation in {${\rm
  BV}(\Omega;{\bf R}\sp m)$} of quasi-convex integrals}, J. Funct. Anal.
  \textbf{109} (1992), 76--97.

\bibitem[AFP00]{AmbrosioFuscoPallara00book}
L.~Ambrosio, N.~Fusco, and D.~Pallara, \emph{{Functions of Bounded Variation
  and Free-Discontinuity Problems}}, Oxford Mathematical Monographs, Oxford
  University Press, 2000.

\bibitem[AR16]{AR2}
A.~Arroyo-Rabasa, \emph{Relaxation and optimization for linear-growth convex integral
  functionals under {PDE} constraints}, J. Funct. Anal. \textbf{273} (2017), 2388--2427.

\bibitem[BCMS13]{BaiaChermisiMatiasSantos13}
M.~Ba{\'{\i}}a, M.~Chermisi, J.~Matias, and P.~M. Santos, \emph{Lower
  semicontinuity and relaxation of signed functionals with linear growth in the
  context of {$\mathcal A$}-quasiconvexity}, Calc. Var. Partial Differential
  Equations \textbf{47} (2013), 465--498.

\bibitem[BM84]{BallMurat84}
J.~M. Ball and F.~Murat, \emph{{$W^{1,p}$}-quasiconvexity and variational
  problems for multiple integrals}, J. Funct. Anal. \textbf{58} (1984), no.~3,
  225--253.

\bibitem[BFT00]{BarrosoFonsecaToader00}
A.~C. Barroso, I.~Fonseca, and R.~Toader, \emph{A relaxation theorem in the
  space of functions of bounded deformation}, Ann. Scuola Norm. Sup. Pisa Cl.
  Sci. \textbf{29} (2000), 19--49.
  
\bibitem[Bil03]{Bildhauer03book}
M.~Bildhauer, \emph{Convex variational problems}, Lecture Notes in Mathematics,
  vol. 1818, Springer, 2003.

\bibitem[Dac82]{Dacorogna82b}
B.~Dacorogna, \emph{{Weak Continuity and Weak Lower Semicontinuity for
  Nonlinear Functionals}}, Lecture Notes in Mathematics, vol. 922, Springer,
  1982.
  
 \bibitem[DR16a]{DePhilippisRindler16b}
G.~{De Philippis} and F.~Rindler, \emph{Characterization of generalized {Young}
  measures generated by symmetric gradients}, Arch. Ration. Mech. Anal. \textbf{224}, (2017), no.~3, 1087--1125. 
  
\bibitem[DR16b]{DePhilippisRindler16}
\bysame, \emph{On the structure of $\mathcal{A}$-free measures and
  applications}, Ann. of Math. \textbf{184} (2016), 1017--1039.  

\bibitem[DiP85]{DiPerna85}
R.~J. DiPerna, \emph{Compensated compactness and general systems of
  conservation laws}, Trans. Amer. Math. Soc. \textbf{292} (1985), 383--420.

\bibitem[DM87]{DiPernaMajda87}
R.~J. DiPerna and A.~J. Majda, \emph{Oscillations and concentrations in weak
  solutions of the incompressible fluid equations}, Comm. Math. Phys.
  \textbf{108} (1987), 667--689.

\bibitem[FLM04]{FonsecaLeoniMuller04}
I.~Fonseca, G.~Leoni, and S.~M{\"u}ller, \emph{{$\mathcal{A}$}-quasiconvexity:
  weak-star convergence and the gap}, Ann. Inst. H. Poincar\'e Anal. Non
  Lin\'eaire \textbf{21} (2004), no.~2, 209--236.

\bibitem[FM93]{FonsecaMuller93}
I.~Fonseca and S.~M{\"u}ller, \emph{Relaxation of quasiconvex functionals in
  {${\rm BV}(\Omega,{\bf R}\sp p)$} for integrands {$f(x,u,\nabla u)$}}, Arch.
  Ration. Mech. Anal. \textbf{123} (1993), 1--49.

\bibitem[FM99]{FonsecaMuller99}
\bysame, \emph{{$\mathcal{A}$}-quasiconvexity, lower semicontinuity, and
  {Y}oung measures}, SIAM J. Math. Anal. \textbf{30} (1999), 1355--1390.

\bibitem[KK11]{KirchheimKristensen11}
B.~Kirchheim and J.~Kristensen, \emph{{Automatic convexity of rank-1 convex
  functions}}, Comptes Rendus Mathematique \textbf{349} (2011), 407--409.

\bibitem[KK16]{KirchheimKristensen16}
\bysame, \emph{On rank-one convex functions that are homogeneous of degree
  one}, Arch. Ration. Mech. Anal. \textbf{221} (2016), 527--558.

\bibitem[KR10a]{KristensenRindler10}
J.~Kristensen and F.~Rindler, \emph{Characterization of generalized gradient
  {Young} measures generated by sequences in {W$\sp{1,1}$} and {BV}}, Arch.
  Ration. Mech. Anal. \textbf{197} (2010), 539--598, Erratum: Vol. 203 (2012),
  693-700.

\bibitem[KR10b]{KristensenRindler10Relax}
\bysame, \emph{Relaxation of signed integral functionals in {BV}}, Calc. Var.
  Partial Differential Equations \textbf{37} (2010), 29--62.

\bibitem[Mat95]{Mattila95book}
P.~Mattila, \emph{{Geometry of Sets and Measures in Euclidean Spaces}},
  Cambridge Studies in Advanced Mathematics, vol.~44, Cambridge University
  Press, 1995.

\bibitem[{Mor}66]{Morrey66book}
C.~B. {Morrey, Jr.}, \emph{{Multiple Integrals in the Calculus of Variations}},
  Grundlehren der mathematischen Wissenschaften, vol. 130, Springer, 1966.

\bibitem[M{\"u}l87]{Muller87}
S.~M{\"u}ller, \emph{Homogenization of nonconvex integral functionals and
  cellular elastic materials}, Arch. Rational Mech. Anal. \textbf{99} (1987),
  no.~3, 189--212.

\bibitem[Mur78]{Murat78}
F.~Murat, \emph{Compacit\'e par compensation}, Ann. Scuola Norm. Sup. Pisa Cl.
  Sci. \textbf{5} (1978), 489--507.

\bibitem[Mur79]{Murat79}
\bysame, \emph{Compacit\'e par compensation. {II}}, Proceedings of the
  {I}nternational {M}eeting on {R}ecent {M}ethods in {N}onlinear {A}nalysis
  ({R}ome, 1978), Pitagora, 1979, pp.~245--256.

\bibitem[Mur81]{Murat81}
\bysame, \emph{Compacit\'e par compensation: condition n\'ecessaire et
  suffisante de continuit\'e faible sous une hypoth\`ese de rang constant},
  Ann. Scuola Norm. Sup. Pisa Cl. Sci. \textbf{8} (1981), 69--102.

\bibitem[Pre87]{Preiss87}
D.~Preiss, \emph{Geometry of measures in {${\bf R}\sp n$}: distribution,
  rectifiability, and densities}, Ann. of Math. \textbf{125} (1987), 537--643.

\bibitem[Rin11]{Rindler11}
F.~Rindler, \emph{Lower semicontinuity for integral functionals in the space of
  functions of bounded deformation via rigidity and {Young} measures}, Arch.
  Ration. Mech. Anal. \textbf{202} (2011), 63--113.

\bibitem[Rin12]{Rindler12BV}
\bysame, \emph{Lower semicontinuity and {Young} measures in {BV} without
  {Alberti's Rank-One Theorem}}, Adv. Calc. Var. \textbf{5} (2012), 127--159.

\bibitem[Tar79]{Tartar79}
L.~Tartar, \emph{Compensated compactness and applications to partial
  differential equations}, Nonlinear analysis and mechanics: {H}eriot-{W}att
  {S}ymposium, {V}ol. {IV}, Res. Notes in Math., vol.~39, Pitman, 1979,
  pp.~136--212.

\bibitem[Tar83]{Tartar83}
\bysame, \emph{The compensated compactness method applied to systems of
  conservation laws}, Systems of nonlinear partial differential equations
  ({O}xford, 1982), NATO Adv. Sci. Inst. Ser. C Math. Phys. Sci., vol. 111,
  Reidel, 1983, pp.~263--285.

\end{thebibliography}


\providecommand{\bysame}{\leavevmode\hbox to3em{\hrulefill}\thinspace}
\providecommand{\MR}{\relax\ifhmode\unskip\space\fi MR }
\providecommand{\MRhref}[2]{%
  \href{http://www.ams.org/mathscinet-getitem?mr=#1}{#2}
}
\providecommand{\href}[2]{#2}

\end{document}